\newcommand{\sst}[2]{\left\lbrace #1\,\textbf{\upshape\textbar}\,#2 \right\rbrace}
\newcommand{\lamsymb}{\mathbf{c}}
\newcommand{\lam}[2]{\lamsymb_{#1}(#2)}
\newcommand{\lamfunc}[1]{\lamsymb_{#1}}
\newcommand{\lamIIfunc}{\lamfunc{2}}
\newcommand{\lamIIIfunc}{\lamfunc{3}}
\newcommand{\lamII}[1]{\lam{2}{#1}}
\newcommand{\lamIII}[1]{\lam{3}{#1}}
\newcommand{\lamrsymb}{\mathbf{r}}
\newcommand{\lamrII}{\lamrsymb_2}
\newcommand{\lamSsymb}{\lamsymb^\circ}
\newcommand{\lamS}[2]{\lamSsymb_{#1}(#2)}
\newcommand{\lamSfunc}[1]{\lamSsymb_{#1}}
\newcommand{\lamSIIfunc}{\lamSfunc{2}}
\newcommand{\lamSII}[1]{\lamS{2}{#1}}
\newcommand{\maxsymb}{\mathbf{M}}
\newcommand{\maxlam}[2]{\maxsymb_{#1}(#2)}
\newcommand{\maxlamII}[1]{\maxlam{2}{#1}}
\newcommand{\maxlamIII}[1]{\maxlam{3}{#1}}
\newcommand{\maxlamIIn}{\maxlamII{n}}
\newcommand{\maxlamIIIn}{\maxlamIII{n}}
\newcommand{\maxsymbS}{\maxsymb^\circ}
\newcommand{\maxlamS}[2]{\maxsymbS_{#1}(#2)}
\newcommand{\maxlamSII}[1]{\maxlamS{2}{#1}}
\newcommand{\maxlamSIIn}{\maxlamS{2}{n}}
\newcommand{\maxsetsymb}{\mathcal{H}}
\newcommand{\maxlamset}[2]{\maxsetsymb_{#1}(#2)}
\newcommand{\maxlamdnset}{\maxlamset{d}{n}}
\newcommand{\maxsetSsymb}{\maxsetsymb^\circ}
\newcommand{\maxlamSset}[2]{\maxsetSsymb_{#1}(#2)}
\newcommand{\maxlamSIIset}[1]{\maxlamSset{2}{#1}}
\newcommand{\maxlamSdnset}{\maxlamSset{d}{n}}
\newcommand{\setsymb}{\mathcal{C}}
\newcommand{\lamset}[3]{\setsymb_{#1}(#2,#3)}
\newcommand{\setSsymb}{\setsymb^\circ}
\newcommand{\lamSset}[3]{\setSsymb_{#1}(#2,#3)}
\newcommand{\minsymb}{\mathbf{m}}
\newcommand{\minlam}[2]{\minsymb_{#1}(#2)}
\newcommand{\minlamIII}[1]{\minlam{3}{#1}}
\newcommand{\minlamIIIn}{\minlamIII{n}}
\newcommand{\minsymbS}{\minsymb^\circ}
\newcommand{\minlamS}[2]{\minsymbS_{#1}(#2)}
\newcommand{\minsetsymb}{\mathcal{S}}
\newcommand{\minlamset}[2]{\minsetsymb_{#1}(#2)}
\newcommand{\minlamdnset}{\minlamset{d}{n}}
\newcommand{\minsetSsymb}{\minsetsymb^\circ}
\newcommand{\minlamSset}[2]{\minsetSsymb_{#1}(#2)}
\newcommand{\minlamSdnset}{\minlamSset{d}{n}}
\newcommand{\modop}{\,\mathrm{mod}\,}
\newcommand{\bnrs}[3]{\tikz[baseline=3pt]{\fill[#3] (0,0)rectangle(#1*#2,0.4);\node[anchor=east] at (0,0.2) {#1};}}
\newcommand{\msolve}{\texttt{msolve}}
\renewcommand{\leq}{\leqslant}
\renewcommand{\geq}{\geqslant}
\newcommand{\R}{\mathbb R}
\newcommand{\C}{\mathbb C}
\newcommand{\setM}[2]{\mathcal M_{#1}^{#2}}
\newcommand{\setMn}{\setM{2}{n}}
\newcommand{\setD}[3]{\mathcal D_{#1}^{#2,#3}}
\newcommand{\setDn}{\setD{2}{n}{3}}
\DeclareMathOperator{\id}{id}
\newtheorem{theorem}{Theorem}
\newtheorem{proposition}[theorem]{Proposition}
\newtheorem{lemma}[theorem]{Lemma}
\newtheorem{corollary}[theorem]{Corollary}
\newtheorem{definition}[theorem]{Definition}
\theoremstyle{definition}
\newtheorem{oproblem}{Open Problem}
\newtheorem{conjecture}[oproblem]{Conjecture}
\colorlet{ncol}{green!60!black}
\colorlet{colbg}{white}
\colorlet{colfg}{black}
\colorlet{colgraphv}{colfg!75!white}
\colorlet{colgraphe}{colfg!55!white}
\colorlet{colG}{DarkSeaGreen}
\definecolor{colR}{HTML}{CC6677}
\definecolor{colO}{HTML}{DDCC77}
\definecolor{colB}{HTML}{6699CC}
\colorlet{colY}{Gold!90!black}
\colorlet{colGray}{white!60!black}
\colorlet{colGold}{Gold!90!black}
\colorlet{colP}{colY!80!white}
\colorlet{colX}{black!50!white}
\colorlet{colGw}{colG!60!white}
\colorlet{colBw}{colB!60!white}
\colorlet{colRw}{colR!60!white}
\colorlet{colPw}{colP!60!white}
\colorlet{colXw}{colX!60!white}
\colorlet{cola}{colfg!75!colbg}
\tikzstyle{vertex}=[fill=colgraphv,circle,inner sep=0pt, minimum size=4pt]
\tikzstyle{fvertex}=[vertex,minimum size=3pt,fill=colbg,draw=colgraphv,double=colbg,double distance=0.25pt,outer sep=1pt]
\tikzstyle{nvertex}=[vertex,colG]
\tikzstyle{hvertex}=[vertex,colB]
\tikzstyle{edge}=[line width=1.5pt,colgraphe]
\tikzstyle{oedge}=[edge,colR]
\tikzstyle{hedge}=[edge,colB]
\tikzstyle{nedge}=[edge,colG]
\tikzstyle{edgeq}=[edge,dashed,colGray]
\tikzstyle{dedge}=[edge,-latex]
\tikzstyle{gridl}=[black!50!white]
\tikzstyle{value}=[circle,inner sep=0pt, minimum size=2pt]
\tikzstyle{valueG}=[value,fill=colG]
\tikzstyle{valueGw}=[value,fill=colGw]
\tikzstyle{valueR}=[value,fill=colR]
\tikzstyle{valueRw}=[value,fill=colRw]
\tikzstyle{valueB}=[value,fill=colB]
\tikzstyle{valueBw}=[value,fill=colBw]
\tikzstyle{valueP}=[value,fill=colP]
\tikzstyle{valuePw}=[value,fill=colPw]
\tikzstyle{valueX}=[value,fill=colX]
\tikzstyle{valueXw}=[value,fill=colXw]
\tikzstyle{genericgraph}=[dashed,black!70!white]
\tikzstyle{moreedges}=[black!70!white,dotted,shorten <= 2pt, shorten >= 2pt]
\tikzstyle{construle}=[ultra thick,-{Classical TikZ Rightarrow[]}]
\tikzstyle{dconstrule}=[ultra thick,{Classical TikZ Rightarrow[]}-{Classical TikZ Rightarrow[]}]
\tikzstyle{eframe}=[black!30!white,rounded corners]
\tikzstyle{labelsty}=[font=\scriptsize]
\tikzstyle{axes}=[cola,-{Classical TikZ Rightarrow[]}]
\tikzstyle{indline}=[dashed,black!40!white]
\tikzstyle{gline}=[line width=1pt]
\tikzstyle{aline}=[draw=cola]
\tikzstyle{bline}=[draw=cola!10!colbg]
\tikzstyle{alabelsty}=[cola,font=\scriptsize]
\tikzstyle{legend}=[minimum size=4pt]
\begin{document}

\title{Explorations on the number of realizations of minimally rigid graphs}

\author{%
	Georg Grasegger%
	\thanks{Johann Radon Institute for Computational and Applied Mathematics (RICAM), Austrian Academy of Sciences}
}
\date{}

\maketitle

\begin{abstract}
	Rigid graphs have only finitely many realizations. In the recent years significant progress was made in computing the number of such realizations.
	With this progress it was also possible for the first time to do computations on large sets of graphs.
	In this paper we show what we can conclude from the data we got from these computations.
	This includes new lower bounds on the maximal realization count for a given number of vertices, upper bounds for the minimal realization count in higher dimensions and effects of rigidity preserving construction rules on the realization number.
	In all cases we give certificate graphs which prove the respective results.
\end{abstract}

\section{Introduction}
It is well known that given three lengths we can uniquely draw a triangle in the plane with these side lengths up to isometries as long as they fulfill the triangle inequalities.
The triangle graph is therefore called rigid.
When we glue two triangles at an edge we get a graph for which there are more ways to draw it for reasonable lengths.
Still there are finitely many up to direct isometries (see \Cref{fig:count-example}).
Note, that in this paper we count reflections to be different realizations in order to follow the main references.

\begin{figure}[ht]
	\centering
	\begin{tikzpicture}[scale=0.8]
			\node[vertex] (a1) at (0,-2) {};
			\node[vertex] (b1) at (0,0) {};
			\node[vertex] (c1) at (2,0) {};
			\node[vertex] (d1) at (2,1) {};
			\node[vertex] (a2) at (3,2) {};
			\node[vertex] (b2) at (3,0) {};
			\node[vertex] (c2) at (5,0) {};
			\node[vertex] (d2) at (5,-1) {};
			\node[vertex] (a3) at (6,2) {};
			\node[vertex] (b3) at (6,0) {};
			\node[vertex] (c3) at (8,0) {};
			\node[vertex] (d3) at (8,1) {};
			\node[vertex] (a4) at (9,-2) {};
			\node[vertex] (b4) at (9,0) {};
			\node[vertex] (c4) at (11,0) {};
			\node[vertex] (d4) at (11,-1) {};
			\path[edge] (a1)edge(b1) (b1)edge(c1) (c1)edge(a1) (b1)edge(d1) (c1)edge(d1);
			\path[edge] (a2)edge(b2) (b2)edge(c2) (c2)edge(a2) (b2)edge(d2) (c2)edge(d2);
			\path[edge] (a3)edge(b3) (b3)edge(c3) (c3)edge(a3) (b3)edge(d3) (c3)edge(d3);
			\path[edge] (a4)edge(b4) (b4)edge(c4) (c4)edge(a4) (b4)edge(d4) (c4)edge(d4);
		\end{tikzpicture}
	\caption{All realizations up to direct isometries of a 4-vertex graph with given edge lengths.}
	\label{fig:count-example}
\end{figure}
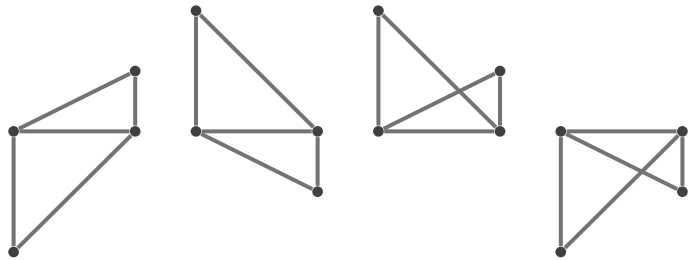

In general we are interested in the number of realizations (i.\,e., drawings) that we can get for a graph with edge lengths.
A graph together with a realization is called a framework. We call a framework rigid, if there are only finitely many other frameworks up to direct isometries with the same edge lengths.
However, we need to be careful here. In the Euclidean space the number of realizations of a framework does depend on the choice of the lengths.
It is known that rigidity is a generic property, i.\,e. we can consider it a graph property and hence we can think of a realization number of a graph using some genericity assumptions.
However, it turns out that this number is not easy to compute.
To circumvent this issue recent research has focused on counting realizations over the complex numbers.
This has the advantage that we need to care less about genericity and tools from algebraic geometry can be used to actually compute such realizations counts combinatorially.

\minisec{Previous work}

The number of realizations of rigid graphs in different dimensions has recently attained a lot of attention. The gain in interest is partially based on new combinatorial algorithms.
But even before that the realization count was analyzed algebraically \cite{SteffensTheobald,BorceaStreinu2004,Emiris2009,EmirisMoroz} and combinatorially \cite{Jackson2018}.
In \cite{PlaneCount} a combinatorial algorithm for counting the number of complex realizations of a minimally rigid graph in dimension 2 was presented. An implementation can be found in \cite{ZenodoAlg,RigiComp,CapcoCpp} and previous computing data is available at \cite{ZenodoData}. Further improvements have been presented in \cite{Calligraphs}.
Similar considerations were done on the complex sphere \cite{SphereCount,CallisphereT} with implementation at \cite{SphereAlg,RigiComp,CalligraphSoftware, CapcoCpp}.
In practice very often real realizations play a role. These have been counted for instance in \cite{Bartzos2018,BartzosISSAC}.
It is known that there are graphs that have fewer real realizations than complex ones even generically \cite{Jackson2018}.
Particularly interesting is the maximal number of realizations for graphs with a given number of vertices.
Lower bounds for this have been presented in \cite{LowerBounds,Bartzos2018,BartzosISSAC} and upper bounds were found in \cite{BorceaStreinu2004,BartzosNew,Bartzos2020,Bartzos23}.

\minisec{Our contribution}
In this paper we improve the existing lower bounds on the maximal number of realizations for a given vertex count for the complex plane, sphere and three dimensional space by further exploiting the methods from \cite{LowerBounds}. In higher dimensions we also show upper bounds for the minimal such number.
Furthermore, we show observations and results from extensive computational experiments. As such we work on an open problem from \cite{LowerBounds} to see how certain rigid graph constructions influence the number of realizations. This is interesting because these constructions can be used to generate every minimally rigid graph.

\minisec{Organization of the paper}
We repeat basic notions of rigidity in \Cref{sec:prelim}.
One main part of this paper is to obtain better lower bounds for the maximal number of realizations for graphs with a given number of vertices.
To do so we construct large graphs by gluing them on minimally rigid subgraphs (\Cref{sec:fan}).
In \Cref{sec:dim-2,sec:sphere,sec:space} we present new bounds in the plane, the sphere, and higher dimensional spaces, respectively.
Having all the computational data, we analyze thoroughly the impact of certain rigidity preserving constructions on the number of realizations (\Cref{sec:factors}).

Throughout the paper we represent graphs by integers which are obtained by
flattening the upper triangular part of their adjacency matrices and interpreting
the binary sequence as an integer.
We refer to \Cref{appendix:encoding} for further details.
In \Cref{sec:enc:sepcial} we collect the encodings
of all graphs mentioned throughout the paper.

\section{Preliminaries}\label{sec:prelim}
In this section we briefly recall the necessary definitions from rigidity theory and introduce the notation we are using in this paper.
\begin{definition}
	A realization of a graph in a space $R$ is a map $\rho\colon V \rightarrow R$.
	Two realizations $\rho$ and $\rho'$ are congruent if $||\rho(u)-\rho(v)||=||\rho'(u)-\rho'(v)||$ for all pairs of vertices $u,v\in V$.
	They are called equivalent if this only holds for pairs $u,v$ that form an edge.

	A graph $G=(V,E)$ is called (generically) $d$-rigid if for any generic\footnote{i.\,e.\ where the coordinates are algebraically independent over the rationals} realization $\rho\colon V \rightarrow \R^d$ there exists $\varepsilon>0$
	such that an equivalent realization $\rho'$ with $||\rho(v)-\rho'(v)||<\varepsilon$ for every $v\in V$ is indeed congruent.
\end{definition}
A graph is minimally $d$-rigid if it is $d$-rigid and the deletion of any edge yields a graph that is not $d$-rigid any more.
Whenever we say rigid in this paper we always mean 2-rigid. For rigidity in higher dimensions we explicitly state the $d$.

Minimally 2-rigid graphs have been classified combinatorially and lists of all minimally rigid graphs up to a certain number of vertices can be computed \cite{RigiComp,Larsson,ZenodoData}.
The rigid graphs for realizations on the $d$-dimensional sphere are indeed the $d$-rigid graphs (compare \cite{Eftekhari2019}).
Unfortunately for $d\geq3$ there is no combinatorial classification of $d$-rigidity. Nevertheless, sets of $d$-rigid graphs can be computed using probabilistic methods with no false positives.

Throughout the paper we use the following notation:
\begin{center}
	\begin{tabular}{ll}
		$\setM{d}{n}$ & set of minimally $d$-rigid graphs with $n$ vertices\\
		$\setD{d}{n}{k}$ & set of minimally $d$-rigid graphs with $n$ vertices and minimum degree $k$
	\end{tabular}
\end{center}

The definition of rigidity implies that for a (minimally) $d$-rigid graph there are only finitely many non-congruent realizations in $d$-dimensional space up to isometries.
In the rest of the paper we consider realizations over the complex numbers, and we use $||\cdot||$ to be the extension of the Euclidean norm defined by $||x||=\sum_{i=1}^n x_i^2$, where $x_i$ is the $i$-th component of the vector $x$.
\begin{definition}\label{def:laman-number-2d}
	For a minimally $d$-rigid graph $G=(V,E)$ we define $\lam{d}{G}$ to be the number of non-congruent complex realizations of $G$ in $\C^d$.
	Further, we define $\maxlam{d}{n}$ to be the largest $\lam{d}{G}$ and $\minlam{d}{n}$ to be the smallest $\lam{d}{G}$ among all minimally $d$-rigid graphs with $n$ vertices.

	Similarly, we define $\lamS{d}{G}$, $\maxlamS{d}{n}$ and $\minlamS{d}{n}$ for realizations on the $d$-dimensional sphere.

	We define $\lamset{d}{n}{c}=\sst{G\in\setM{d}{n}}{\lam{d}{G}=c}$ and respectively $\lamSset{d}{n}{c}$.

	Let $\maxlamdnset=\lamset{d}{n}{\maxlam{d}{n}}$ be the set of all minimally $d$-rigid graphs with $n$ vertices and $\lam{d}{G}=\maxlam{d}{n}$
	and let $\minlamdnset=\lamset{d}{n}{\minlam{d}{n}}$ be the set of all minimally $d$-rigid graphs with $n$ vertices and $\lam{d}{G}=\minlam{d}{G}$.
	Similarly we define $\maxlamSdnset$ and $\minlamSdnset$.
\end{definition}
For further details on the definition of the complex realization count and its relation to the rigidity map, we refer to \cite{PlaneSphere}.
Note that here, in the contrary to \cite{Jackson2018,PlaneSphere}, we do consider a reflection as non-congruent to be consistent with \cite{PlaneCount,SphereCount}.
This means the values we get for $\lam{d}{G}$ are twice the value one would get from \cite{Jackson2018,PlaneSphere}.

\section{Fan constructions}\label{sec:fan}
It is a common strategy to construct minimally rigid graphs from joining smaller ones.
Here in particular we are interested in gluing graphs on common subgraphs in such a way that we can tell the number of realizations of the new graph easily.
Fan constructions were introduced in \cite{BorceaStreinu2004} and in \cite{LowerBounds} a generalization of the fan construction was proposed.
For this purpose we take a minimally rigid graph $G=(V,E)$ with a minimally rigid subgraph $H=(W,F)$ of $G$.
Then we glue $k$ copies of $G$ along the subgraph and obtain a fan consisting of
$|W|+k(|V|-|W|)$ vertices (see \Cref{fig:fan} for an illustration).
The number of realizations of this graph is $\lamII{H}\cdot(\lamII{G}/\lamII{H})^k$ as a consequence of the following more general results,
for which we do not claim originality but have not seen them written anywhere proven.
\begin{lemma}\label{lem:subrc}
	Let $G$ be a minimally $d$-rigid graph and $H$ a minimally $d$-rigid proper subgraph with at least $d$ vertices.
	Then $\lam{d}{H}$ divides $\lam{d}{G}$.
\end{lemma}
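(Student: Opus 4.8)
The plan is to realize both $\lam{d}{G}$ and $\lam{d}{H}$ as degrees of dominant, generically finite morphisms and then exhibit a factorization that forces the divisibility. Write $G=(V,E)$ and $H=(W,F)$ with $W\subseteq V$ and $F\subseteq E$. Over $\C$ the group $\mathrm{SE}(d,\C)=\mathrm{SO}(d,\C)\ltimes\C^d$ of direct isometries acts on the configuration space $(\C^d)^V$, and the squared-length (measurement) map $\mu_G\colon(\C^d)^V\to\C^E$ is invariant under it; we quotient only by direct isometries, so that reflections are counted separately, as in our convention. Passing to the quotient $Q_G=(\C^d)^V/\mathrm{SE}(d,\C)$, which is irreducible because it is the image of the irreducible affine space $(\C^d)^V$, the map $\mu_G$ descends to a dominant morphism $\bar\mu_G\colon Q_G\to\C^E$ of irreducible varieties of the same dimension $|E|=d|V|-\binom{d+1}{2}$ (dominance is exactly generic $d$-rigidity of $G$). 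By the identification of the realization count with the rigidity map (see \cite{PlaneSphere}) its degree is $\lam{d}{G}$. The same construction applied to $H$ gives $\bar\mu_H\colon Q_H\to\C^F$ with $\deg\bar\mu_H=\lam{d}{H}$, where $Q_H$ is irreducible for the same reason and the hypothesis $|W|\geq d$ ensures the quotient and hence this degree are well defined.

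First I would assemble the evident commuting square. Restriction of a realization of $G$ to $W$ is $\mathrm{SE}(d,\C)$-equivariant and descends to a morphism $\pi\colon Q_G\to Q_H$; the coordinate projection $r\colon\C^E\to\C^F$ forgets the edges outside $F$; and since $F\subseteq E$ one has $\mu_H(\rho|_W)=r(\mu_G(\rho))$, so that $r\circ\bar\mu_G=\bar\mu_H\circ\pi$. This commutativity produces a morphism $\Theta=(\pi,\bar\mu_G)\colon Q_G\to Q_H\times_{\C^F}\C^E$. Because $r$ is the trivial projection $\C^F\times\C^{E\setminus F}\to\C^F$, the fibre product is isomorphic to $Q_H\times\C^{E\setminus F}$, hence irreducible and of dimension $|F|+|E\setminus F|=|E|=\dim Q_G$.

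Next I would check that $\Theta$ is dominant. Its composite with the projection $\mathrm{pr}\colon Q_H\times_{\C^F}\C^E\to\C^E$ is $\bar\mu_G$, which is dominant; since the fibre of $\mathrm{pr}$ over a generic $\lambda\in\C^E$ is $\bar\mu_H^{-1}(r(\lambda))$, a finite nonempty set, $\mathrm{pr}$ is generically finite, and a dimension count then shows that $\Theta$ cannot factor through a proper closed subvariety, so it is dominant. Being a dominant morphism of irreducible varieties of equal dimension in characteristic $0$, $\Theta$ is generically finite of some degree $c\geq1$, and there is a dense open $U\subseteq Q_H\times\C^{E\setminus F}$ over which every fibre of $\Theta$ has exactly $c$ points.

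Finally I would read the count off fibrewise. For generic $\lambda\in\C^E$ the fibre of $\mathrm{pr}$ over $\lambda$ is $\bar\mu_H^{-1}(r(\lambda))$, a set of $\lam{d}{H}$ points lying in $U$ by genericity, and its total preimage under $\Theta$ is precisely $\bar\mu_G^{-1}(\lambda)$, a set of $\lam{d}{G}$ points; since each of the $\lam{d}{H}$ points has exactly $c$ preimages, $\lam{d}{G}=c\cdot\lam{d}{H}$, whence $\lam{d}{H}\mid\lam{d}{G}$. I expect the main obstacle to be the constancy of the fibre cardinality of $\Theta$, and this is exactly what the irreducibility of the fibre product (equivalently of $Q_H$) buys us: over a reducible target one could only bound, not equate, the fibre sizes. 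The hypotheses that $H$ is minimally $d$-rigid with at least $d$ vertices and a proper subgraph enter precisely here — they guarantee that $\bar\mu_H$ is set up with the same acting group, that $\lam{d}{H}$ is well defined, and that $W\subsetneq V$, so that $\Theta$ maps into the claimed irreducible fibre product of the correct dimension.
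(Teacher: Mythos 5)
Your proof is correct and follows essentially the same route as the paper's: the same commutative square relating the rigidity maps of $G$ and $H$, the same fibre product $X_{H,d}\times\C^{|E(G\setminus H)|}$ (your $Q_H\times_{\C^F}\C^E\cong Q_H\times\C^{E\setminus F}$), and the same conclusion via multiplicativity of degrees along the factorization of $f_{G,d}$ through that fibre product. The only cosmetic difference is that the paper works with the linear slice $X_{G,d}$ obtained by pinning $d$ vertices of $H$ rather than with the quotient by $\mathrm{SE}(d,\C)$ (which sidesteps the question of whether that quotient is an honest variety); your explicit fibre count is just an unwinding of the same degree argument.
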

\begin{proof}
	The number of realizations corresponds to the degree of the rigidity map $f_{G,d}$ of a graph $G$ (compare \cite{PlaneCount,PlaneSphere}).
	Let $\bar f_{G,d}\colon \C^{d|V(G)|} \longrightarrow \C^{|E(G)|} $, i.\,e.,
	$\bar f_{G,d}(p)=(\frac{1}{2}||p_v-p_w||^2)_{vw\in E(G)}$, where $||x||^2$ denotes an extension of the squared Euclidean norm to $\C^d$,
	by $||x||^2:=\sum_{i=1}^d x_i^2$, for $x=(x_1,\ldots,x_d)\in \C^d$.
	This map has infinite fibers due to rotations and translations of a given realization.
	In order to avoid them we fix some of the coordinates to be $0$.
	Let $v_1,\ldots,v_d\in V(G)$. Following the notation of \cite{PlaneSphere} we define
	\begin{align*}
		X_{G,d}:=\sst{p\in \C^{d|V(G)|}}{p(v_k)_i=0 \text{ for } j\geq k}.
	\end{align*}
	The $X_{G,d}$ has dimension $d|V(G)|-\binom{d+1}{2}$.
	By \cite[Lem~3.2]{PlaneSphere} every realization of $G$ has a congruent representative in $X_{G,d}$.
	Let now $f_{G,d}\colon X_{G,d} \longrightarrow \C^{|E(G)|} $ with $f_{G,d}(p)=\bar f_{G,d}(p)$.
	We fix $v_1,\ldots,v_d\in V(H)$ and consider the following commutative diagram.
	\begin{center}
		\begin{tikzpicture}
			\node[] (v) at (-2.5,1) {$X_{G,d}$};
			\node[] (e) at (2.5,1) {$\C^{|E(G)|}$};
			\node[] (vs) at (-2.5,-1) {$X_{H,d}\times \C^{|E(G\setminus H)|}$};
			\node[] (es) at (2.5,-1) {$\C^{|E(H)|}\times \C^{|E(G\setminus H)|}$};
			\draw[-{Classical TikZ Rightarrow}] (v) to node[above,labelsty] {$f_{G,d}$} (e);
			\draw[-{Classical TikZ Rightarrow}] (vs) to node[below,labelsty] {$(f_{H,d},\id)$} (es);
			\draw[-{Classical TikZ Rightarrow}] (v) to node[left,labelsty] {$\pi$} (vs);
			\draw[-{Classical TikZ Rightarrow}] (e) to node[right,labelsty] {$\id$} (es);
		\end{tikzpicture}
	\end{center}
	Here $\pi$ maps a realization of $G$ to its restriction to $H$ and additionally remembers the edge lengths for all the edges not in $H$.
	The $v_i$ are in $H$ and hence the restriction of a realization to $H$ is in $X_{H,d}$.
	Since $G$ and $H$ are minimally $d$-rigid, we know that $f_{G,d}$ and $f_{H,d}$ are dominant and hence generically finite (compare \cite[Lem~3.3, Prop.~3.5]{PlaneSphere}).
	Also $\pi$ is finite.
	Since the diagram is commutative we have $f_{G,d}=\pi\circ (f_{H,d},\id)$ and hence, the degree of $f_{G,d}$ is divisible by the degree of $f_{H,d}$.
	By the correspondence of the degrees of these maps to the realization count, $\lam{d}{G}$ divides $\lam{d}{H}$.
\end{proof}
From this we conclude how to count realizations of a graph obtained from gluing.
\begin{theorem}\label{thm:fanformula}
	Let $G$ be a minimally $d$-rigid graph obtained by a gluing operation with $k$ copies of a minimally $d$-rigid graph $G'$ on a minimally $d$-rigid subgraph $H$ of $G'$ with at least $d$ vertices.
	Then $\lam{d}{G}=\lam{d}{H}\cdot \left( \frac{\lam{d}{G'}}{\lam{d}{H}}\right)^k$.
\end{theorem}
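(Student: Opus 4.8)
The plan is to reuse the commutative diagram and the degree argument from the proof of \Cref{lem:subrc}, applied once to the component graph $G'$ and once to the glued graph $G$, and then to identify the generic fibres of the two projection maps. First I would record the quantity that is implicit in \Cref{lem:subrc}: in the diagram for $G'$ the rigidity map factors as $f_{G',d}=(f_{H,d},\id)\circ\pi'$, where $\pi'\colon X_{G',d}\to X_{H,d}\times\C^{|E(G'\setminus H)|}$ restricts a realization to $H$ and remembers the lengths of the edges outside $H$. Since $\deg f_{G',d}=\lam{d}{G'}$ and $\deg f_{H,d}=\lam{d}{H}$, and the factor $(f_{H,d},\id)$ has degree $\lam{d}{H}$, multiplicativity of degrees gives $\deg\pi'=\lam{d}{G'}/\lam{d}{H}$, which is an integer by \Cref{lem:subrc}. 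Geometrically this is the number of ways a generic realization of $H$ with generic outside lengths extends to a single copy of $G'$.

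Next I would set up the analogous diagram for $G$. Writing $W=V(H)$, the gluing gives $V(G)=W\sqcup\bigsqcup_{i=1}^k(V(G')\setminus W)$, and the edges of $G$ outside $H$ are the disjoint union of the non-$H$ edges of the $k$ copies, so $\C^{|E(G\setminus H)|}=\prod_{i=1}^k\C^{|E(G'\setminus H)|}$. The same reasoning as in \Cref{lem:subrc} (using that $G$ is minimally $d$-rigid) yields a finite projection $\pi\colon X_{G,d}\to X_{H,d}\times\C^{|E(G\setminus H)|}$ with $f_{G,d}=(f_{H,d},\id)\circ\pi$, whence $\lam{d}{G}=\deg f_{G,d}=\deg\pi\cdot\lam{d}{H}$. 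The theorem therefore reduces to the single claim $\deg\pi=(\deg\pi')^k=\left(\lam{d}{G'}/\lam{d}{H}\right)^k$.

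To establish this claim I would compute a generic fibre of $\pi$ directly. Fixing a realization $\rho_H$ of $H$ and length tuples $\ell_1,\dots,\ell_k\in\C^{|E(G'\setminus H)|}$, a point of $\pi^{-1}(\rho_H,\ell_1,\dots,\ell_k)$ is a placement of all non-$H$ vertices of $G$ that restricts to $\rho_H$ on $H$ and realizes the prescribed lengths on each copy. Because distinct copies share neither vertices nor edges outside $H$, these placements are independent across copies and the fibre splits as the product $\prod_{i=1}^k(\pi')^{-1}(\rho_H,\ell_i)$. The step I expect to require the most care is the genericity bookkeeping: I must argue that for a generic choice of $(\rho_H,\ell_1,\dots,\ell_k)$ each pair $(\rho_H,\ell_i)$ lands outside the proper closed locus of $X_{H,d}\times\C^{|E(G'\setminus H)|}$ over which $\pi'$ has fewer than $\deg\pi'$ points. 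This holds because the preimage of that bad locus under each coordinate projection is a proper closed subset of the full parameter space, and a generic point avoids their finite union; note that the shared coordinate $\rho_H$ causes no trouble, since pullback of a proper closed set along a dominant projection stays proper and closed. With each of the $k$ factors then contributing exactly $\deg\pi'$ points, the generic fibre of $\pi$ has $(\deg\pi')^k$ points, so $\deg\pi=(\deg\pi')^k$, and combining with $\lam{d}{G}=\deg\pi\cdot\lam{d}{H}$ gives the stated formula.
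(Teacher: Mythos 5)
Your proposal is correct, and it reaches the formula by a somewhat different route than the paper. The paper peels off one copy at a time: writing $G$ as $G'$ glued to $G''$ (the union of the remaining copies) along $H$, it applies \Cref{lem:subrc} twice to get $\lam{d}{G}=\lambda\,\lam{d}{G''}$ and $\lam{d}{G'}=\alpha\,\lam{d}{H}$, asserts $\lambda=\alpha$ on the grounds that $G\setminus G''=G'\setminus H$ so ``the map $\pi$ has the same degree in both cases,'' and then concludes by induction on $G''$. You instead work with the single projection $\pi\colon X_{G,d}\to X_{H,d}\times\C^{|E(G\setminus H)|}$ and show directly that its generic fibre is the $k$-fold product of generic fibres of $\pi'$, which gives $\deg\pi=(\lam{d}{G'}/\lam{d}{H})^k$ in one step. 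The two arguments rest on the same mechanism (the commutative diagram of \Cref{lem:subrc} and multiplicativity of degrees of dominant generically finite maps), but your version has two advantages: it supplies exactly the fibre-splitting and genericity bookkeeping that justifies the paper's one-line claim $\lambda=\alpha$, and it avoids the paper's implicit reliance on the intermediate graphs $G''$ being minimally $d$-rigid (a true but unstated fact about gluing along minimally rigid subgraphs, needed there to invoke \Cref{lem:subrc} for the pair $G\supset G''$). The paper's induction, in turn, is shorter on the page because it delegates all the geometry to the lemma. One cosmetic remark: your composition order $f_{G',d}=(f_{H,d},\id)\circ\pi'$ is the correct reading of the diagram; the paper's line ``$f_{G,d}=\pi\circ(f_{H,d},\id)$'' has the factors transposed.
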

\begin{proof}
	Let us assume $G$ is constructed by gluing $G'$ with $G''$ on a common minimally $d$-rigid subgraph $H$.
	By \Cref{lem:subrc} we know that $\lam{d}{G}=\lambda \lam{d}{G''}$ for some integer $\lambda>0$ and $\lam{d}{G'}=\alpha \lam{d}{H}$ for some integer $\alpha>0$.
	Indeed we know that $\lambda=\alpha$ since $G\setminus G''=G'\setminus H$ and therefore the map $\pi$ from the proof of \Cref{lem:subrc} has the same degree in both cases.
	Hence, we get
	\begin{align*}
		\lam{d}{G}=\frac{\lam{d}{G}}{\lam{d}{G'}}\lam{d}{G'}= \frac{\lam{d}{G}}{\alpha} \frac{\lam{d}{G'}}{\lam{d}{H}}= \frac{\lambda}{\alpha} \lam{d}{G''} \frac{\lam{d}{G'}}{\lam{d}{H}}
		 = \lam{d}{G''} \frac{\lam{d}{G'}}{\lam{d}{H}} = \lam{d}{H} \frac{\lam{d}{G''}}{\lam{d}{H}} \frac{\lam{d}{G'}}{\lam{d}{H}}.
	\end{align*}
	When $G''=G'$ we are done. Otherwise we do the same inductively on $G''$.
\end{proof}

Using the fan construction together with the previous lemmas, we get the following bound for $\maxlamIIn$.
This comprises the gluing of $k$ copies of $G$ at $H$ and some 0-extensions to obtain the required number of vertices.
A 0-extension adds a vertex and two edges (see \Cref{sec:factors} for details) that increases the number of realizations by a factor of 2.
\begin{equation}\label{eq:bound_genfan}
	\maxlam{d}{n} \geq 2^{(n-|W|)\modop(|V|-|W|)} \cdot\lam{d}{H} \cdot
	\left(\frac{\lam{d}{G}}{\lam{d}{H}}\right)^{\!\lfloor(n-|W|)/(|V|-|W|)\rfloor} \qquad (n\geq|W|).
\end{equation}

\begin{figure}[ht]
	\centering
	\begin{tikzpicture}[scale=0.5]
		\node[vertex] (a) at (-1,0) {};
		\node[vertex] (b) at (1,0) {};
		\node[vertex] (c) at (0,0.75) {};
		\node[vertex] (d) at ($(a)+(130:6)$) {};
		\node[vertex] (e) at ($(b)+(130:6)$) {};
		\node[vertex] (f) at ($0.5*(d)+0.5*(e)-(0,0.75)$) {};
		\node[vertex] (g) at ($(a)+(105:6)$) {};
		\node[vertex] (h) at ($(b)+(105:6)$) {};
		\node[vertex] (i) at ($0.5*(g)+0.5*(h)-(0,0.75)$) {};
		\node[vertex] (j) at ($(a)+(75:6)$) {};
		\node[vertex] (k) at ($(b)+(75:6)$) {};
		\node[vertex] (l) at ($0.5*(j)+0.5*(k)-(0,0.75)$) {};
		\node[vertex] (m) at ($(a)+(50:6)$) {};
		\node[vertex] (n) at ($(b)+(50:6)$) {};
		\node[vertex] (o) at ($0.5*(m)+0.5*(n)-(0,0.75)$) {};
		\draw[edge] (a)edge(b) (b)edge(c) (c)edge(a);
		\draw[edge] (d)edge(e) (e)edge(f) (f)edge(d);
		\draw[edge] (d)edge(a) (e)edge(b) (f)edge(c);
		\draw[edge] (g)edge(h) (h)edge(i) (i)edge(g);
		\draw[edge] (g)edge(a) (h)edge(b) (i)edge(c);
		\draw[edge] (j)edge(k) (k)edge(l)	(l)edge(j);
		\draw[edge] (j)edge(a) (k)edge(b) (l)edge(c);
		\draw[edge] (m)edge(n) (n)edge(o) (o)edge(m);
		\draw[edge] (m)edge(a) (n)edge(b) (o)edge(c);
	\end{tikzpicture}
	\caption{Fan construction with four copies of the three-prism graph glued on a triangle subgraph.}
	\label{fig:fan}
\end{figure}
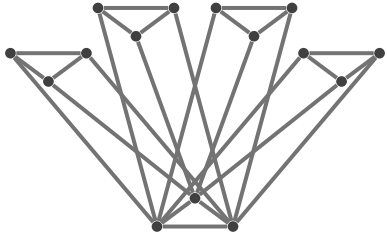

Note that the caterpillar and fan constructions from \cite{BorceaStreinu2004} are special instance of the
generalized fan.
To indicate the subgraph $H$ used in a generalized fan construction we also write $H$-fan.
Using our encoding for graphs the usual fan would be denoted by $7$-fan.
The fan fixing the 4-vertex minimally rigid graph is then denoted by $31$-fan.
Further encodings of small minimally rigid graphs can be found in \Cref{fig:smallm2rinteger}.

\section{Realizations in the plane}
\label{sec:dim-2}
The algorithmic result from \cite{PlaneCount} and its implementation \cite{ZenodoAlg,CapcoCpp} allowed large scale computations of $\lamIIfunc$ for many graphs.
A particular interest lies in graphs with a high number of realizations.
Using fan constructions, one can then construct a lower bound for the maximal number of realizations for graphs with a given number of vertices.
In the following we use these fans to derive new and better lower bounds than the
previously known ones.

Lists of all minimally rigid graphs with at most 13 vertices have been computed, so we do know also $\lamII{G}$ for every such $G$ and hence, $\maxlamIIn$ for $n\leq13$ is known (see also \cite{A306420}).
Indeed also for $n=14$ the maximum is known but there are too many graphs to keep all the list stored.
The graphs with maximum number of realizations up to $n=12$ are shown in \cite{LowerBounds}, the ones with $n=13$ and $n=14$ are listed in the Appendix (\Cref{tab:enc:1-fan}).
For higher number of vertices it is not efficiently possible to compute realization numbers for all the minimally rigid graphs because of the huge amount of them.
Nevertheless, the implementation of \cite{ZenodoAlg} allows for computing the number of realizations for single graphs up to 24 vertices, where the latter already takes a several days even with some parallelization.
Overall we have computed the realization count for several million graphs with 15 or more vertices.
From a theoretical point of view there is no limit to compute more but time and memory resources play an important role.
\Cref{tab:enc:1-fan} also lists graphs with currently largest known number of realizations for $n\leq23$.
These may be used with a 1-fan to get lower bounds for $\maxlamII{n}$. More generally \Cref{tab:bounds} summarizes the best growth rates that we get from the computations we did.
By growth rate we mean $\left(\lamII{G}/\lamII{H}\right)^{1/(|V|-|W|)}$, which gives the essential part of \labelcref{eq:bound_genfan}.
In \Cref{fig:growth} these numbers are illustrated.
Note, that we did a few computations for larger graphs as well but did not gain any better values.
The table and the figure therefore only contain values where some gain was obtained compared to fewer vertices.

\begin{table}[ht]
	\centering\small
	\begin{tabular}{rllllll}
		\toprule
		$n$ & 1-fan   & 7-fan   & 31-fan  & 254-fan & 7916-fan\\\midrule
		12 & 2.39386 & 2.43198 & 2.43006 & 2.39802 & 2.35824 \\
		13 & 2.40453 & 2.44498 & 2.46039 & 2.43006 & 2.39802 \\\hline
		14 & 2.43185 & 2.46092 & 2.46391 & 2.46039 & 2.43006 \\
		15 & 2.44695 & 2.48236 & 2.48167 & 2.46700 & 2.46039 \\
		16 & 2.46890 & 2.49802 & 2.49854 & 2.49200 & 2.46700 \\
		17 & 2.49019 & 2.51717 & 2.51269 & 2.50558 & 2.49200 \\
		18 & 2.50568 & 2.53153 & 2.53183 & 2.52210 & 2.50558 \\
		19 & 2.51640 & 2.54276 & 2.54671 & 2.54188 & 2.52210 \\
		20 & 2.52948 & 2.55428 & 2.55823 & 2.55640 & 2.54188 \\
		21 & 2.54120 & 2.56587 & 2.56829 & 2.56852 & 2.55640 \\
		22 & 2.55351 & 2.57682 & 2.57995 & 2.57690 & 2.56852 \\
		23 & 2.55643 & 2.58550 & 2.58912 & 2.59025 & 2.57690 \\
		24 &         & 2.58648 & 2.59829 & 2.59972 & 2.59025\\
		\bottomrule
	\end{tabular}
	\caption{Growth rates (rounded) of the lower bounds using different fan constructions.
			For $n\leq13$ these values are proven to be the best achievable ones.
			For $n>13$ the values are just the best we found, so it is possible that there are better ones.\\
			The encodings for the graphs can be found in \Cref{tab:enc:1-fan,tab:enc:7-fan,tab:enc:31-fan,tab:enc:254-fan,tab:enc:7916-fan}.
	}
	\label{tab:bounds}
\end{table}

In contrast to \cite{LowerBounds}, where the best bound was achieved by a 7-fan construction, we get the best bound now by a 254-fan.
Only slightly worse is the best 31-fan we found. The more general 7916-fan does seem to give shifted results from the 254-fan (and similar for more general fans not shown here) but this might be misleading due the small number of vertices and the choice of graphs in the sample sets.

\begin{figure}[ht]
	\centering
	\begin{tikzpicture}[yscale=0.9]
		\begin{scope}[yscale=20,xscale=0.6]
			\coordinate (o) at (5,2.26);
			\draw[cola,rounded corners] (2.5,2.22) rectangle (25.9,2.64);
			\node[alabelsty,rotate=90] at ($(0,2.26)!0.5!(0,2.59)+(3.25,0)$) {growth rate};
			\node[alabelsty] at ($(6,0)!0.5!(24,0)+(0,2.232)$) {$n$};
			\foreach \y in {2.3,2.4,2.5,2.6} \draw[bline] (5,\y) -- +(-0.2,0) node[left,alabelsty] {$\y$};
			\foreach \x in {6,...,24} \draw[bline] (\x,2.27) -- +(0,-0.005) node[below,alabelsty] {$\x$};

			\foreach \y [count=\i] in {2.27,2.28,...,2.6}
			{
				\draw[bline] (5,\y)--(25,\y);
			}

			\draw[indline] (13,2.27) -- (13,2.61) node[alabelsty,anchor=east] {\tikz{\draw[aline,solid,-{LaTeX[round]}] (0,0)--(-0.5,0);} complete};
			\draw[indline] (18,2.27) -- (18,2.61);
			\draw[indline] (5,2.50798) -- (25.3,2.50798) node[right,alabelsty,anchor=west,rotate=90] {new \tikz{\draw[aline,solid,-{LaTeX[round]}] (0,0)--(0.5,0);}};

			\draw[colR,gline]
				(8, 2.26772)
			-- (9, 2.30338)
			-- (10, 2.33378)
			-- (11, 2.36196)
			-- (12, 2.39386)
			-- (13, 2.40453)
			-- (14, 2.43185);
			\draw[colRw,gline]
				(14, 2.43185)
			-- (15, 2.44695)
			-- (16, 2.4689)
			-- (17, 2.49019)
			-- (18, 2.50568)
			-- (19, 2.5164)
			-- (20, 2.52948)
			-- (21, 2.5412)
			-- (22, 2.55351)
			-- (23, 2.55784);
			\node[valueR] at (8, 2.26772) {};
			\node[valueR] at (9, 2.30338) {};
			\node[valueR] at (10, 2.33378) {};
			\node[valueR] at (11, 2.36196) {};
			\node[valueR] at (12, 2.39386) {};
			\node[valueR] at (13, 2.40453) {};
			\node[valueR] at (14, 2.43185) {};
			\node[valueRw] at (15, 2.44695) {};
			\node[valueRw] at (16, 2.4689) {};
			\node[valueRw] at (17, 2.49019) {};
			\node[valueRw] at (18, 2.50568) {};
			\node[valueRw] at (19, 2.5164) {};
			\node[valueRw] at (20, 2.52948) {};
			\node[valueRw] at (21, 2.5412) {};
			\node[valueRw] at (22, 2.55351) {};
			\node[valueRw] at (23, 2.55784) {};
			\draw[colB,gline]
				(6, 2.28943)
			-- (7, 2.30033)
			-- (8, 2.32542)
			-- (9, 2.35824)
			-- (10, 2.38581)
			-- (11, 2.41159)
			-- (12, 2.43198)
			-- (13, 2.44498);
			\draw[colBw,gline]
				(13, 2.44498)
			-- (14, 2.46092)
			-- (15, 2.48236)
			-- (16, 2.49802)
			-- (17, 2.51717)
			-- (18, 2.53153)
			-- (19, 2.54276)
			-- (20, 2.55428)
			-- (21, 2.56587)
			-- (22, 2.57682)
			-- (23, 2.5855)
			-- (24, 2.58648);
			\node[valueB] at (6, 2.28943) {};
			\node[valueB] at (7, 2.30033) {};
			\node[valueB] at (8, 2.32542) {};
			\node[valueB] at (9, 2.35824) {};
			\node[valueB] at (10, 2.38581) {};
			\node[valueB] at (11, 2.41159) {};
			\node[valueB] at (12, 2.43198) {};
			\node[valueB] at (13, 2.44498) {};
			\node[valueBw] at (14, 2.46092) {};
			\node[valueBw] at (15, 2.48236) {};
			\node[valueBw] at (16, 2.49802) {};
			\node[valueBw] at (17, 2.51717) {};
			\node[valueBw] at (18, 2.53153) {};
			\node[valueBw] at (19, 2.54276) {};
			\node[valueBw] at (20, 2.55428) {};
			\node[valueBw] at (21, 2.56587) {};
			\node[valueBw] at (22, 2.57682) {};
			\node[valueBw] at (23, 2.5855) {};
			\node[valueBw] at (24, 2.58648) {};
			\draw[colG,gline]
				(7, 2.28943)
			-- (8, 2.30033)
			-- (9, 2.35216)
			-- (10, 2.35824)
			-- (11, 2.38581)
			-- (12, 2.43006)
			-- (13, 2.46039);
			\draw[colGw,gline]
				(13, 2.46039)
			-- (14, 2.46391)
			-- (15, 2.48167)
			-- (16, 2.49854)
			-- (17, 2.51269)
			-- (18, 2.53183)
			-- (19, 2.54671)
			-- (20, 2.55823)
			-- (21, 2.56829)
			-- (22, 2.57995)
			-- (23, 2.58912)
			-- (24, 2.59829);
			\node[valueG] at (7, 2.28943) {};
			\node[valueG] at (8, 2.30033) {};
			\node[valueG] at (9, 2.35216) {};
			\node[valueG] at (10, 2.35824) {};
			\node[valueG] at (11, 2.38581) {};
			\node[valueG] at (12, 2.43006) {};
			\node[valueG] at (13, 2.46039) {};
			\node[valueGw] at (14, 2.46391) {};
			\node[valueGw] at (15, 2.48167) {};
			\node[valueGw] at (16, 2.49854) {};
			\node[valueGw] at (17, 2.51269) {};
			\node[valueGw] at (18, 2.53183) {};
			\node[valueGw] at (19, 2.54671) {};
			\node[valueGw] at (20, 2.55823) {};
			\node[valueGw] at (21, 2.56829) {};
			\node[valueGw] at (22, 2.57995) {};
			\node[valueGw] at (23, 2.58912) {};
			\node[valueGw] at (24, 2.59829) {};
			\draw[colP,gline]
				(8, 2.28943)
			-- (9, 2.30033)
			-- (10, 2.35216)
			-- (11, 2.35824)
			-- (12, 2.39802)
			-- (13, 2.43006);
			\draw[colPw,gline]
				(13, 2.43006)
			-- (14, 2.46039)
			-- (15, 2.467)
			-- (16, 2.492)
			-- (17, 2.50558)
			-- (18, 2.5221)
			-- (19, 2.54188)
			-- (20, 2.5564)
			-- (21, 2.56852)
			-- (22, 2.5769)
			-- (23, 2.59025)
			-- (24, 2.59972);
			\node[valueP] at (8, 2.28943) {};
			\node[valueP] at (9, 2.30033) {};
			\node[valueP] at (10, 2.35216) {};
			\node[valueP] at (11, 2.35824) {};
			\node[valueP] at (12, 2.39802) {};
			\node[valueP] at (13, 2.43006) {};
			\node[valuePw] at (14, 2.46039) {};
			\node[valuePw] at (15, 2.467) {};
			\node[valuePw] at (16, 2.492) {};
			\node[valuePw] at (17, 2.50558) {};
			\node[valuePw] at (18, 2.5221) {};
			\node[valuePw] at (19, 2.54188) {};
			\node[valuePw] at (20, 2.5564) {};
			\node[valuePw] at (21, 2.56852) {};
			\node[valuePw] at (22, 2.5769) {};
			\node[valuePw] at (23, 2.59025) {};
			\node[valuePw] at (24, 2.59972) {};
			\draw[colX,gline]
				(9, 2.28943)
			-- (10, 2.30033)
			-- (11, 2.35216)
			-- (12, 2.35824)
			-- (13, 2.39802);
			\draw[colXw,gline]
				(13, 2.39802)
			-- (14, 2.43006)
			-- (15, 2.46039)
			-- (16, 2.467)
			-- (17, 2.492)
			-- (18, 2.50558)
			-- (19, 2.5221)
			-- (20, 2.54188)
			-- (21, 2.5564)
			-- (22, 2.56852)
			-- (23, 2.5769)
			-- (24, 2.59025);
			\node[valueX] at (9, 2.28943) {};
			\node[valueX] at (10, 2.30033) {};
			\node[valueX] at (11, 2.35216) {};
			\node[valueX] at (12, 2.35824) {};
			\node[valueX] at (13, 2.39802) {};
			\node[valueXw] at (14, 2.43006) {};
			\node[valueXw] at (15, 2.46039) {};
			\node[valueXw] at (16, 2.467) {};
			\node[valueXw] at (17, 2.492) {};
			\node[valueXw] at (18, 2.50558) {};
			\node[valueXw] at (19, 2.5221) {};
			\node[valueXw] at (20, 2.54188) {};
			\node[valueXw] at (21, 2.5564) {};
			\node[valueXw] at (22, 2.56852) {};
			\node[valueXw] at (23, 2.5769) {};
			\node[valueXw] at (24, 2.59025) {};
		\end{scope}

		\begin{scope}
			\coordinate (s) at (0,-0.3);
			\coordinate (l) at ($(o)+(10.5,3)$);
			\draw[cola,fill=white,rounded corners] ($(l)+(-0.25,0.3)$) rectangle ++(1.65,-1.8);
			\node[valueR,legend,label={[alabelsty]0:1-fan}] at (l) {};
			\node[valueB,legend,label={[alabelsty]0:7-fan}] at ($(l)+(s)$) {};
			\node[valueG,legend,label={[alabelsty]0:31-fan}] at ($(l)+2*(s)$) {};
			\node[valueP,legend,label={[alabelsty]0:254-fan}] at ($(l)+3*(s)$) {};
			\node[valueX,legend,label={[alabelsty]0:7916-fan}] at ($(l)+4*(s)$) {};
		\end{scope}
	\end{tikzpicture}
	\caption{Growth rates of the lower bounds.
		The light colors indicate values that were not found by exhaustive search and which therefore could possibly
		be improved. The horizontal dashed line indicates the lower bound known from \cite{LowerBounds}.
		The second vertical dashed line indicates the number of vertices considered in \cite{LowerBounds}
		with the respective bound found therein (horizontal dashed line).}
	\label{fig:growth}
\end{figure}

From \Cref{tab:bounds} we can see the previous bound $2.28943^n$
obtained in \cite{BorceaStreinu2004}, $2.30033^n$ from \cite{EmirisMoroz},
$2.41159^n$ from~\cite{Jackson2018}, and $2.50798^n$ from \cite{LowerBounds} as well as the current
improvements obtained in this paper.
By instantiating \labelcref{eq:bound_genfan} with the minimally rigid graph encoded by
\begin{align*}
	1621117988222861364696506244132890911843704707085263796691486054849687756801808,
\end{align*}
which has 24 vertices, $\lamII{G}=611930960$ realizations and a five vertex minimally rigid subgraph 254 for gluing,
we obtain the following theorem.
\begin{theorem}
	\label{thm:lower-bd-2d}
	The maximal number of realizations $\maxlamIIn$, for $n\geq 5$, satisfies
	\begin{equation*}
		\maxlamIIn \geq 8\cdot2^{(n-5)\modop 19}\cdot(611930960/8)^{\lfloor(n-5)/19\rfloor}.
	\end{equation*}
	This means $\maxlamIIn$ grows at least as $\bigl(\!\sqrt[19]{611930960/8}\bigr)^n$, which is approximately $2.59972^n$.
	In other words $\bigl(\!\sqrt[19]{76491370}\bigr)^n\in\mathcal O(\maxlamIIn)$.
\end{theorem}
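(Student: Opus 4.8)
The plan is to obtain the bound as a direct instantiation of the generalized fan inequality \labelcref{eq:bound_genfan}, so that the only genuine work lies in certifying the combinatorial data of the single graph $G$ on which the fan is built. First I would fix $d=2$, take $G$ to be the $24$-vertex graph given by the stated encoding, and take $H$ to be the subgraph encoded by $254$, which has $|W|=5$ vertices. Substituting $|V|=24$, $|W|=5$, and hence $|V|-|W|=19$, into \labelcref{eq:bound_genfan} turns the right-hand side into
\begin{equation*}
	2^{(n-5)\modop 19}\cdot\lamII{H}\cdot\left(\frac{\lamII{G}}{\lamII{H}}\right)^{\lfloor(n-5)/19\rfloor},
\end{equation*}
so the asserted inequality follows at once from the values $\lamII{H}=8$ and $\lamII{G}=611930960$.

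Second, I would establish the three facts that license the use of \labelcref{eq:bound_genfan}: that $G$ is minimally $2$-rigid on $24$ vertices, that $H$ is a minimally $2$-rigid subgraph on $5\geq 2$ vertices, and the two realization counts $\lamII{H}=8$ and $\lamII{G}=611930960$. Minimal rigidity of $H$ and of $G$ is a combinatorial (Laman) check on the decoded adjacency matrices, and the containment of $H$ in $G$ is read off directly from the encodings. The value $\lamII{H}=8$ is the small, easily recomputed realization number of the five-vertex graph $254$. The crux is the count $\lamII{G}=611930960$: this is the output of the combinatorial counting algorithm of \cite{PlaneCount} as implemented in \cite{ZenodoAlg,CapcoCpp}, and for a graph of this size it is the expensive step, on the order of days of computation. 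I would present it as a certified computational fact rather than attempt to derive it by hand.

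Finally, I would read off the growth rate. Writing $\lfloor(n-5)/19\rfloor \geq (n-5)/19 - 1$ and using $2^{(n-5)\modop 19}\geq 1$, the right-hand side is at least a fixed positive constant times $\bigl((611930960/8)^{1/19}\bigr)^{n}$. Since $611930960/8 = 76491370$, this exhibits $\maxlamIIn$ as growing at least like $(\sqrt[19]{76491370})^{n}$, equivalently $(\sqrt[19]{76491370})^{n}\in\mathcal O(\maxlamIIn)$, and a numerical evaluation gives $\sqrt[19]{76491370}\approx 2.59972$. The only obstacle worth flagging is the reliability of the large realization count; everything else is a substitution into the already-proven inequality \labelcref{eq:bound_genfan} together with an elementary estimate on the floor function.
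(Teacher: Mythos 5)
Your proposal is correct and follows exactly the paper's route: the theorem is obtained by instantiating \labelcref{eq:bound_genfan} with the stated $24$-vertex graph ($\lamII{G}=611930960$) glued along the five-vertex minimally rigid subgraph $254$ ($\lamII{H}=8$, so $|V|-|W|=19$), with the realization count of the large graph accepted as a certified output of the counting algorithm of \cite{PlaneCount}. The additional remarks on verifying minimal rigidity and on extracting the asymptotic growth rate from the floor function are consistent with, and slightly more explicit than, what the paper records.
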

Note that we are talking about complex realizations here. A respective bound for real realizations $\sim2.378^n$ can be found in \cite{Bartzos2018}.
To our knowledge the best known upper bound for $\maxlamIIn$ is approximately $3.4641^n$ \cite{Bartzos23}.

In \cite{LowerBounds} a set of properties, including symmetries, were presented which were satisfied by the graphs with what was back then the maximal number of realizations found.
These properties are no longer satisfied by the currently known maxima.
Still, we do not know whether we even found the graphs with the maximum number of realizations for any $n\geq 15$.
Nevertheless, the currently best known graphs do have some properties in common:
\begin{itemize}
	\item Minimum degree is 3 and maximum degree is 4, i.\,e.\ there are exactly six vertices of degree 3.
	\item No two vertices of degree 3 are adjacent.
	\item For $n\geq12$ the graph does not contain a 3-cycle subgraph.
	\item For $n\geq12$ the graph is non-planar.
	\item The graph has chromatic number 3.
	\item The graph is Hamiltonian.
	\item For each degree 3 vertex non of its neighbors are connected, i.\,e.\ in the construction of the graph a 1-extension of type E1c is needed (see \Cref{sec:factors}).
\end{itemize}
These properties however might be biased by the set of graphs for which the computations have been done.
It is unclear so far on whether a graph with $\lamII{G}=\maxlamII{G}$ would need to have any of these properties indeed.

In \cite{LowerBounds,Jackson2018} it was conjectured that $\lamII{G}\geq 2^{n-2}$. While this conjecture remains unproven, recent computations have shown that it is true at least for graphs with at most 13 vertices and non of the larger examples we computed would contradict it either.
\begin{corollary}\label{cor:lowerII}
	Let $G$ be a minimally rigid graph with $n\leq 13$ vertices. Then $\lamII{G}\geq 2^{n-2}$.
\end{corollary}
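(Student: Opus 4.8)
The plan is to reduce the statement to a finite computation over a restricted family of graphs by peeling off degree-$2$ vertices, and then to invoke the available realization-count data for the remaining minimum-degree-$3$ graphs. I would argue by induction on $n$. For the base cases $n\leq 5$ I first note that every minimally rigid graph has minimum degree at least $2$, and for $n\leq 5$ it cannot have all degrees $\geq 3$ (that would give $2|E|=4n-6\geq 3n$, i.e.\ $n\geq 6$), so some vertex has degree exactly $2$; together with the single-edge case $\lamII{G}=1=2^{0}$ at $n=2$, the base cases follow from the inductive step below.

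For the inductive step I would assume the bound for all minimally rigid graphs on fewer than $n$ vertices and take $G\in\setM{2}{n}$. If $G$ has a vertex $v$ of degree $2$, then deleting $v$ is an inverse $0$-extension, so $G-v\in\setM{2}{n-1}$ is again minimally rigid. As recalled in the paper, a $0$-extension multiplies the realization count by exactly $2$; hence $\lamII{G}=2\,\lamII{G-v}\geq 2\cdot 2^{(n-1)-2}=2^{n-2}$ by the induction hypothesis. Consequently the only graphs not handled by this reduction are those of minimum degree $3$, i.e.\ the members of $\setD{2}{n}{3}$, which first appear at $n=6$.

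It then remains to verify $\lamII{G}\geq 2^{n-2}$ directly for every $G\in\setD{2}{n}{3}$ with $6\leq n\leq 13$. This is where I would rely on computation: the complete lists of minimally rigid graphs up to $13$ vertices are available, the minimum-degree-$3$ graphs are the corresponding sublists, and $\lamII{G}$ for each of them has been computed with the algorithm of \cite{PlaneCount}. Scanning these finitely many values confirms the inequality, and in fact the minimum is attained, so the bound is sharp in this range.

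The hard part—and the reason the statement is only a corollary for $n\leq 13$ rather than a theorem for all $n$—is exactly the family $\setD{2}{n}{3}$. The $0$-extension reduction is clean because removing a degree-$2$ vertex halves the count, but a graph of minimum degree $3$ offers no such vertex, and the inverse operation available to it (a $1$-extension) does not multiply the realization count by any fixed factor. Thus there is no purely combinatorial induction that covers the minimum-degree-$3$ graphs, and one is forced to fall back on the exhaustive enumeration and explicit realization counts, which at present reach only $n=13$; beyond that range the inequality remains the open conjecture of \cite{LowerBounds,Jackson2018}.
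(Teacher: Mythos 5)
Your proposal is correct and matches the paper's approach: the paper justifies this corollary by exhaustive computation over the minimally rigid graphs with at most 13 vertices, using exactly the 0-reduction you describe (delete a degree-2 vertex, halve the count) to reduce to the minimum-degree-3 graphs in $\setD{2}{n}{3}$, whose realization counts are then checked directly from the computed data. Your explicit induction and the degree-counting argument showing that minimum-degree-3 graphs first appear at $n=6$ simply make precise what the paper leaves implicit.
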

Minimally rigid graphs that can be obtained from a single edge by just using 0-extensions always have $\lamII{G}=2^{n-2}$ (compare also \Cref{sec:factors} for the definitions of $k$-extensions).
But also other graphs may have the same realization count. In \Cref{tab:low} we show how many of them there are overall and how many with minimum degree 3.
The latter are graphs that need at least a 1-extension to be constructed. They are somehow more interesting since for all other graphs we can delete a degree 2 vertex (so called 0-reduction), compute $\lamIIfunc$ for the smaller graph and multiply by two to get the realization count of the original graph.
\begin{table}[ht]
	\centering\small
	\begin{tabular}[t]{rrrl}
		\toprule
		$n$ & $|\setMn|$ & $|\lamset{2}{n}{2^{n-2}}|$ & $|\lamset{2}{n}{2^{n-2}}|/|\setMn|$ \\\midrule
		7   &         70 &       64 & \bnrs{0.914286}{2}{colG!50!white}\\
		8   &        608 &      525 & \bnrs{0.863487}{2}{colG!50!white}\\
		9   &       7222 &     5826 & \bnrs{0.806702}{2}{colG!50!white}\\
		10  &     110132 &    80912 & \bnrs{0.734682}{2}{colG!50!white}\\
		11  &    2039273 &  1338956 & \bnrs{0.656585}{2}{colG!50!white}\\
		12  &   44176717 & 25551013 & \bnrs{0.578382}{2}{colG!50!white}\\
		\bottomrule
	\end{tabular}
	\begin{tabular}[t]{rrrl}
		\toprule
		$n$ & $|\setDn|$ & $|\lamset{2}{n}{2^{n-2}}\cap \setDn|$ & $|\lamset{2}{n}{2^{n-2}}\cap\setDn|/|\setDn|$\\\midrule
		7   &        4 &      1 & \bnrs{0.25000}{5}{colB!50!white}\\
		8   &       32 &      6 & \bnrs{0.18750}{5}{colB!50!white}\\
		9   &      264 &     39 & \bnrs{0.14773}{5}{colB!50!white}\\
		10  &     3189 &    307 & \bnrs{0.09627}{5}{colB!50!white}\\
		11  &    46677 &   2867 & \bnrs{0.06142}{5}{colB!50!white}\\
		12  &   813875 &  30789 & \bnrs{0.03783}{5}{colB!50!white}\\
		13  & 16142835 & 374297 & \bnrs{0.02319}{5}{colB!50!white}\\
		\bottomrule
	\end{tabular}
	\caption{The number of graphs with $\lamII{G}=2^{n-2}$, i.\,e.\ $|\lamset{2}{n}{2^{n-2}}|$ and $|\lamset{2}{n}{2^{n-2}}\cap\setDn|$ for given number of vertices compared to the number of minimally rigid graphs with minimum degree three $|\setDn|$.}
	\label{tab:low}
\end{table}

\minisec{On Real Realizations}
In applications it is usually more interesting to know the number of real realizations.
The algorithms we are using here do not provide much information on these.
Clearly $\lamrII{G}\leq\lamII{G}$ for every minimally rigid graph.
It can be easily seen that graphs which are obtained by only 0-extensions, do have $\lamrII(G)=\lamII{G}$.
This however is not true in general for other minimally rigid graphs.
In \cite{Jackson2018} it was shown that there are graphs where $\lamrII(G)\neq\lamII{G}$;
in particular those graphs which have $4\nmid\lamII{G}$.
Note that the count in \cite{Jackson2018} is half of the count we use here.
We define the set of graphs with $n$ vertices and $4\nmid\lamII{G}$ to be $\mathcal C^4_n$.
\Cref{tab:lreal} shows how many of these graphs exist for small number of vertices. We can see that there are comparably few of them.
However, these are for sure not the only graphs for which $\lamrII(G)$ and $\lamII{G}$ differ.
For instance any 0-extension of such a graph would fulfill $4\mid\lamII{G}$ but it still has $\lamrII(G)\neq\lamII{G}$.
\begin{table}[ht]
	\centering\small
	\begin{tabular}{rrrl}
		\toprule
		$n$ & $|\setDn|$ & $|\mathcal C^4_n|$ & $|\mathcal C^4_n|/|\setDn|$\\\midrule
		8   &       32 &     1 & \bnrs{0.0313}{100}{colG!50!white}\\
		9   &      264 &     3 & \bnrs{0.0114}{100}{colG!50!white}\\
		10  &     3189 &    13 & \bnrs{0.0041}{100}{colG!50!white}\\
		11  &    46677 &   153 & \bnrs{0.0033}{100}{colG!50!white}\\
		12  &   813875 &  2077 & \bnrs{0.0026}{100}{colG!50!white}\\
		13  & 16142835 & 35858 & \bnrs{0.0022}{100}{colG!50!white}\\
		\bottomrule
	\end{tabular}
	\caption{The number of minimally rigid graphs with $4\nmid\lamII{G}$ for given number of vertices, $\mathcal C^4_n$, compared to the number of minimally rigid graphs with minimum degree three $\setDn$.}
	\label{tab:lreal}
\end{table}

In \cite[Thm~4.9]{Jackson2018} it was shown that for planar graphs we have $\lamrII(G)\geq 2^{n-2}$, where $n$ is the number of vertices of $G$.
From this we get
\begin{corollary}
	Let $G$ be a planar minimally rigid graph with $n$ vertices and $\lamII{G}=2^{n-2}$. Then $\lamrII(G)=\lamII{G}$.
\end{corollary}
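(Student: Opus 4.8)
The plan is to establish the result by a squeeze (sandwich) argument, bounding $\lamrII(G)$ simultaneously from above and below by $2^{n-2}$. The two ingredients are both already available in the present setting: the trivial inequality $\lamrII(G) \leq \lamII{G}$, valid for every minimally rigid graph because every real realization is in particular a complex one (noted just above), and the planar lower bound $\lamrII(G) \geq 2^{n-2}$ from \cite[Thm~4.9]{Jackson2018}.

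First I would invoke planarity to apply Jackson's theorem, giving $\lamrII(G) \geq 2^{n-2}$. Then, combining the hypothesis $\lamII{G} = 2^{n-2}$ with the general bound $\lamrII(G) \leq \lamII{G}$, I obtain the chain
\[
	2^{n-2} \leq \lamrII(G) \leq \lamII{G} = 2^{n-2}.
\]
Since the two outer terms coincide, every inequality in the chain is forced to be an equality, and in particular $\lamrII(G) = \lamII{G}$, as claimed.

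I expect no genuine obstacle here; the statement is a one-line consequence of the two quoted bounds once they are lined up. The single point warranting care is the counting convention: this paper counts a reflection as a distinct realization, so its values are twice those appearing in \cite{Jackson2018}. However, both facts used here are already stated in the present paper's convention (the inequality $\lamrII(G) \leq \lamII{G}$ and the planar bound $\lamrII(G) \geq 2^{n-2}$ both appear just above in that form), so no rescaling is needed and the chain of inequalities is internally consistent.
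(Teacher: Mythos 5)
Your proposal is correct and is exactly the argument the paper intends: the corollary is stated immediately after quoting the planar bound $\lamrII(G)\geq 2^{n-2}$ and follows by squeezing against the trivial inequality $\lamrII(G)\leq\lamII{G}$ together with the hypothesis $\lamII{G}=2^{n-2}$. Your remark on the counting convention is also handled correctly, since the paper already states both bounds in its own (reflection-distinguishing) normalization.
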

In \Cref{tab:planarreal} we see how many graphs there are in $\lamset{d}{n}{2^{n-2}}\cap \mathcal P_n$, i.e.\ those graphs of $\mathcal P_n$ that have $\lamII{G}=2^{n-2}$.
\begin{table}[ht]
	\centering\small
	\begin{tabular}{rrrrl}
		\toprule
		$n$ & $|\setDn|$ & $|\mathcal P_n|$ & $|\lamset{d}{n}{2^{n-2}}\cap \mathcal P_n|$ & $|\lamset{d}{n}{2^{n-2}}\cap \mathcal P_n|/|\setDn|$\\\midrule
		7   &        4 &       3 &      1 & \bnrs{0.25000}{10}{colG!50!white}\\
		8   &       32 &      18 &      5 & \bnrs{0.15625}{10}{colG!50!white}\\
		9   &      264 &     122 &     31 & \bnrs{0.11742}{10}{colG!50!white}\\
		10  &     3189 &    1037 &    213 & \bnrs{0.06679}{10}{colG!50!white}\\
		11  &    46677 &    9884 &   1677 & \bnrs{0.03593}{10}{colG!50!white}\\
		12  &   813875 &  101848 &  14071 & \bnrs{0.01729}{10}{colG!50!white}\\
		13  & 16142835 & 1098726 & 124277 & \bnrs{0.00770}{10}{colG!50!white}\\
		\bottomrule
	\end{tabular}
	\caption{The number of graphs planar graphs $|\mathcal P_n|$ and out of those the ones with $\lamII{G}=2^{n-2}$, i.\,e.\ $|\lamset{d}{n}{2^{n-2}}\cap \mathcal P_n|$ for given number of vertices compared to the number of minimally rigid graphs with minimum degree three $|\setDn|$.}
	\label{tab:planarreal}
\end{table}

\section{Realizations on the sphere}
\label{sec:sphere}
It is known that the minimally rigid graphs on the sphere are exactly the minimally 2-rigid graphs in the plane (compare \cite{Eftekhari2019}).
However, counting the number of realizations is different.
In \cite{SphereCount} a combinatorial algorithm was presented to compute the number of complex realizations on the sphere.
This algorithm is faster than using Gröbner bases but still exponential.
Implementations can be found in \cite{SphereAlg,RigiComp,CapcoCpp}.
Now we present first major computational results from this algorithm.
For instance we were able to compute $\lamSIIfunc$ for all minimally rigid graphs with minimum degree 3 and at most 13 vertices.
Hence, with $n\leq 13$ we know the value of $\maxlamSIIn$ precisely.
For higher number of vertices we have experimental results using a large data set on minimally rigid graphs.
Unlike in the plane the graphs with maximal realization count seem to have less properties in common.
In \cite{SphereCount} it was already shown that a graph $G$ with $\lamSII{G}=\maxlamSII{n}$ is not unique, i.\,e., $\maxlamSIIset{n}$ may have more than one element.
For instance there are 5 graphs in $\maxlamSIIset{7}$, where $\lamSII{G}=64=\maxlamSII{7}$.
In \Cref{fig:maxsphere} we show the graphs with maximal realization count on the sphere for $n\in\{10,11,12,13\}$.

\begin{figure}[ht]
	\centering
	\begin{tabular}{m{0.9cm}m{10cm}c}
		\toprule
		n & graph(s) in $\maxlamSset{2}{n}$ & $\lamSIIfunc$ \\\midrule\addlinespace[10pt]
		10 &
		\begin{tikzpicture}[scale=0.7]
			\node[vertex] (1) at (2, 1) {};
			\node[vertex] (2) at (0, 1) {};
			\node[vertex] (3) at (0.5, 1.5) {};
			\node[vertex] (4) at (2.5, 1.5) {};
			\node[vertex] (5) at (1, 0) {};
			\node[vertex] (6) at (1.5, 0.6) {};
			\node[vertex] (7) at (0.5, 0.5) {};
			\node[vertex] (8) at (2.5, 0.5) {};
			\node[vertex] (9) at (1.5, 1.6) {};
			\node[vertex] (10) at (1, 1.1) {};
			\draw[edge] (1)edge(4) (1)edge(8) (1)edge(10) (2)edge(3) (2)edge(7) (2)edge(10) (3)edge(7) (3)edge(9) (4)edge(8) (4)edge(9) (5)edge(7) (5)edge(8) (5)edge(10) (6)edge(7) (6)edge(8) (6)edge(9) (9)edge(10);
		\end{tikzpicture}
		\quad
		\begin{tikzpicture}[scale=0.7]
			\node[vertex] (1) at (1.5, 0.5) {};
			\node[vertex] (2) at (0, 1) {};
			\node[vertex] (3) at (0.5, 1.5) {};
			\node[vertex] (4) at (2, 1) {};
			\node[vertex] (5) at (0.5, -0.5) {};
			\node[vertex] (6) at (1, 0.1) {};
			\node[vertex] (7) at (1, 1.1) {};
			\node[vertex] (8) at (2, 0) {};
			\node[vertex] (9) at (0.5, 0.5) {};
			\node[vertex] (10) at (0, 0) {};
			\draw[edge] (1)edge(4) (1)edge(8) (1)edge(9) (2)edge(3) (2)edge(7) (2)edge(10) (3)edge(7) (3)edge(9) (4)edge(7) (4)edge(8) (5)edge(8) (5)edge(9) (5)edge(10) (6)edge(7) (6)edge(8) (6)edge(10) (9)edge(10);
		\end{tikzpicture}
		\quad
		\begin{tikzpicture}[scale=0.7]
			\node[vertex] (1) at (0, 1.75) {};
			\node[vertex] (2) at (0.75, 0.75) {};
			\node[vertex] (3) at (-1.25, 0.1) {};
			\node[vertex] (4) at (-0.75, 0.75) {};
			\node[vertex] (5) at (1.25, 0.1) {};
			\node[vertex] (6) at (0, 0.5) {};
			\node[vertex] (7) at (-0.5, 0) {};
			\node[vertex] (8) at (0.5, 0) {};
			\node[vertex] (9) at (1, 1.25) {};
			\node[vertex] (10) at (-1, 1.25) {};
			\draw[edge] (1)edge(6) (1)edge(9) (1)edge(10) (2)edge(5) (2)edge(8) (2)edge(10) (3)edge(4) (3)edge(7) (3)edge(10) (4)edge(7) (4)edge(9) (5)edge(8) (5)edge(9) (6)edge(7) (6)edge(8) (7)edge(8) (9)edge(10);
		\end{tikzpicture}
		& 1536 \\[6ex]
		11 &
		\begin{tikzpicture}[scale=0.7]
			\node[vertex] (1) at (2.5, 1.5) {};
			\node[vertex] (2) at (2.5, 0.5) {};
			\node[vertex] (3) at (0.5, 1.5) {};
			\node[vertex] (4) at (0., 1) {};
			\node[vertex] (5) at (1.6, 0.) {};
			\node[vertex] (6) at (1.5, 1.5) {};
			\node[vertex] (7) at (1.5, 0.6) {};
			\node[vertex] (8) at (0.5, 0.5) {};
			\node[vertex] (9) at (1.6, 2) {};
			\node[vertex] (10) at (2, 1) {};
			\node[vertex] (11) at (1, 1.1) {};
			\draw[edge] (1)edge(2) (1)edge(9) (1)edge(10) (2)edge(7) (2)edge(10) (3)edge(4) (3)edge(8) (3)edge(9) (4)edge(8) (4)edge(11) (5)edge(7) (5)edge(8) (5)edge(10) (6)edge(7) (6)edge(9) (6)edge(11) (7)edge(8) (9)edge(11) (10)edge(11);
		\end{tikzpicture}
		\quad
		\begin{tikzpicture}[scale=0.7]
			\node[vertex] (1) at (2.7, 0) {};
			\node[vertex] (2) at (2.2, 0.3) {};
			\node[vertex] (3) at (0.5, 0) {};
			\node[vertex] (4) at (1, 0.5) {};
			\node[vertex] (5) at (1.4, 1.6) {};
			\node[vertex] (6) at (2.5, 1.6) {};
			\node[vertex] (7) at (1.5, 1.1) {};
			\node[vertex] (8) at (0.5, 1) {};
			\node[vertex] (9) at (1.5, 0.1) {};
			\node[vertex] (10) at (3, 0.5) {};
			\node[vertex] (11) at (1.6, 0.6) {};
			\draw[edge] (1)edge(2) (1)edge(9) (1)edge(10) (2)edge(7) (2)edge(9) (3)edge(4) (3)edge(8) (3)edge(9) (4)edge(8) (4)edge(11) (5)edge(7) (5)edge(8) (5)edge(10) (6)edge(7) (6)edge(10) (6)edge(11) (7)edge(8) (9)edge(11) (10)edge(11);
		\end{tikzpicture}
		& 4352 \\[6ex]
		12 &
		\begin{tikzpicture}[scale=0.7]
			\node[vertex] (1) at (2, 0) {};
			\node[vertex] (2) at (3, -1) {};
			\node[vertex] (3) at (0, 0) {};
			\node[vertex] (4) at (1.5, -0.5) {};
			\node[vertex] (5) at (3, 1) {};
			\node[vertex] (6) at (2.5, 0.5) {};
			\node[vertex] (7) at (1.5, 0.5) {};
			\node[vertex] (8) at (0.5, 0.6) {};
			\node[vertex] (9) at (3.1, 0) {};
			\node[vertex] (10) at (2.5, -0.5) {};
			\node[vertex] (11) at (0.5, -0.6) {};
			\node[vertex] (12) at (1, 0) {};
			\draw[edge] (1)edge(10) (1)edge(11) (1)edge(12) (2)edge(9) (2)edge(10) (2)edge(11) (3)edge(8) (3)edge(11) (3)edge(12) (4)edge(7) (4)edge(10) (4)edge(12) (5)edge(6) (5)edge(8) (5)edge(9) (6)edge(7) (6)edge(9) (7)edge(8) (7)edge(12) (8)edge(11) (9)edge(10);
		\end{tikzpicture}
		& 12288 \\[6ex]
		13 &
		\begin{tikzpicture}[scale=0.7]
			\node[vertex] (1) at (-0.5, 2) {};
			\node[vertex] (2) at (1, 2.5) {};
			\node[vertex] (3) at (0.5, 1.5) {};
			\node[vertex] (4) at (-0.5, 0.5) {};
			\node[vertex] (5) at (0, 1) {};
			\node[vertex] (6) at (0, 0) {};
			\node[vertex] (7) at (0, 2.5) {};
			\node[vertex] (8) at (0.5, 2) {};
			\node[vertex] (9) at (-1, 2.5) {};
			\node[vertex] (10) at (0.5, 0.5) {};
			\node[vertex] (11) at (1.25, 0.5) {};
			\node[vertex] (12) at (-1.25, 0.5) {};
			\node[vertex] (13) at (-0.5, 1.5) {};
			\draw[edge] (1)edge(7) (1)edge(9) (1)edge(12) (1)edge(13) (2)edge(7) (2)edge(8) (2)edge(10) (2)edge(11) (3)edge(5) (3)edge(8) (3)edge(11) (3)edge(13) (4)edge(6) (4)edge(9) (4)edge(10) (4)edge(12) (5)edge(6) (5)edge(12) (5)edge(13) (6)edge(10) (6)edge(11) (7)edge(8) (7)edge(9);
		\end{tikzpicture}
		\quad
		\begin{tikzpicture}[scale=0.7]
			\node[vertex] (1) at (0, 1.5) {};
			\node[vertex] (2) at (-0.6, 0) {};
			\node[vertex] (3) at (-0.6, 1) {};
			\node[vertex] (4) at (0.6, 0) {};
			\node[vertex] (5) at (0.6, 1) {};
			\node[vertex] (6) at (-0.6, 2) {};
			\node[vertex] (7) at (0.6, 2) {};
			\node[vertex] (8) at (-1.25, 1.75) {};
			\node[vertex] (9) at (1.25, 1.75) {};
			\node[vertex] (10) at (1.25, 1) {};
			\node[vertex] (11) at (-1.25, 1) {};
			\node[vertex] (12) at (-0.1, -0.5) {};
			\node[vertex] (13) at (0.1, 0.5) {};
			\draw[edge] (1)edge(6) (1)edge(7) (1)edge(12) (1)edge(13) (2)edge(3) (2)edge(4) (2)edge(11) (2)edge(12) (3)edge(5) (3)edge(8) (3)edge(13) (4)edge(5) (4)edge(10) (4)edge(12) (5)edge(9) (5)edge(13) (6)edge(7) (6)edge(8) (6)edge(11) (7)edge(9) (7)edge(10) (8)edge(11) (9)edge(10);
		\end{tikzpicture}
		\quad
		\begin{tikzpicture}[scale=0.7]
			\node[vertex] (1) at (0, 1.5) {};
			\node[vertex] (2) at (-0.75, 1) {};
			\node[vertex] (3) at (-0.75, 0.2) {};
			\node[vertex] (4) at (0.75, 1) {};
			\node[vertex] (5) at (0.75, 0.2) {};
			\node[vertex] (6) at (-0.5, 2) {};
			\node[vertex] (7) at (0.5, 2) {};
			\node[vertex] (8) at (-1.25, 1) {};
			\node[vertex] (9) at (1.25, 1) {};
			\node[vertex] (10) at (1.25, 1.8) {};
			\node[vertex] (11) at (-1.25, 1.8) {};
			\node[vertex] (12) at (-0.25, 0.6) {};
			\node[vertex] (13) at (0.25, 0.6) {};
			\draw[edge] (1)edge(6) (1)edge(7) (1)edge(12) (1)edge(13) (2)edge(3) (2)edge(4) (2)edge(11) (2)edge(12) (3)edge(5) (3)edge(8) (3)edge(12) (4)edge(5) (4)edge(10) (4)edge(13) (5)edge(9) (5)edge(13) (6)edge(7) (6)edge(8) (6)edge(11) (7)edge(9) (7)edge(10) (8)edge(11) (9)edge(10);
		\end{tikzpicture}
		\quad
		\begin{tikzpicture}[scale=0.7,yscale=0.9]
			\node[vertex] (1) at (0.5, 0.5) {};
			\node[vertex] (2) at (2, 0) {};
			\node[vertex] (3) at (1, 0) {};
			\node[vertex] (4) at (0, 2) {};
			\node[vertex] (5) at (3, 2) {};
			\node[vertex] (6) at (0.7, 2.5) {};
			\node[vertex] (7) at (1.5, 1.25) {};
			\node[vertex] (8) at (2.5, 0.5) {};
			\node[vertex] (9) at (3, 0.1) {};
			\node[vertex] (10) at (1.5, 0.5) {};
			\node[vertex] (11) at (0, 0.1) {};
			\node[vertex] (12) at (0.5, 1.25) {};
			\node[vertex] (13) at (1.8, 2.5) {};
			\draw[edge] (1)edge(3) (1)edge(11) (1)edge(12) (1)edge(13) (2)edge(3) (2)edge(8) (2)edge(9) (2)edge(10) (3)edge(10) (3)edge(11) (4)edge(5) (4)edge(6) (4)edge(11) (4)edge(12) (5)edge(7) (5)edge(9) (5)edge(13) (6)edge(7) (6)edge(8) (6)edge(12) (7)edge(10) (7)edge(13) (8)edge(9);
		\end{tikzpicture}
		& 34816 \\
		\bottomrule
	\end{tabular}
	\caption{Graphs with maximal realization count on the sphere $\maxlamSset{2}{n}$ with $n\in\{10,11,12,13\}$ vertices (see \Cref{tab:enc-maxsphere} for encodings).}
	\label{fig:maxsphere}
\end{figure}

Still, there are some common properties of the graphs with maximal realization count on the sphere, $\maxlamSset{2}{n}$, and $n\geq 7$:
\begin{itemize}
	\item Minimum degree is 3 and maximum degree is 4, i.\,e.\ there are exactly six vertices of degree 3.
	\item The graph has at least two triangle subgraphs.
	\item The graph is Hamiltonian.
	\item There is a degree 3 vertex with two of its neighbors being connected, i.\,e.\ in the construction of the graph the last construction step does not need to be a 1-extension of type E1c (see \Cref{sec:factors}).
		This is in contrast to the graphs with maximal realization count in the plane.
\end{itemize}
These properties also hold for the graphs that have currently the largest known number of realizations for $n\geq13$.

Using fan constructions we are able to give a lower bound for $\maxlamSIIn$.
Let $G=(V,E)$ be a minimally 2-rigid graph and $H=(W,F)$ a minimally 2-rigid subgraph. Then similarly to \labelcref{eq:bound_genfan} we get
\begin{equation}\label{eq:sphere_bound_genfan}
	\maxlamSIIn \geq 2^{(n-|W|)\modop(|V|-|W|)} \cdot\lamSII{H} \cdot
	\left(\frac{\lamSII{G}}{\lamSII{H}}\right)^{\!\lfloor(n-|W|)/(|V|-|W|)\rfloor} \qquad (n\geq|W|).
\end{equation}

\Cref{tab:spherebounds} summarizes the growth rates we obtained from experiments and \Cref{fig:spheregrowth} illustrates them.
By growth rate we mean $\left(\lamSII{G}/\lamSII{H}\right)^{1/(|V|-|W|}$, which is the important factor of \labelcref{eq:sphere_bound_genfan}.

\begin{table}[ht]
	\centering\small
	\begin{tabular}{rllllll}
		\toprule
		$n$ & 1-fan   & 7-fan   & 31-fan  & 254-fan & 7916-fan\\\midrule
		6  & 2.37841 & 2.51984 & 2.00000 & 2.00000 & -       \\
		7  & 2.29740 & 2.37841 & 2.51984 & 2.00000 & 2.00000 \\
		8  & 2.40187 & 2.49146 & 2.37841 & 2.51984 & 2.00000 \\
		9  & 2.47940 & 2.56980 & 2.63902 & 2.37841 & 2.51984 \\
		10 & 2.50207 & 2.58342 & 2.61532 & 2.63902 & 2.37841 \\
		11 & 2.53687 & 2.61341 & 2.64094 & 2.61532 & 2.63902 \\
		12 & 2.56418 & 2.63596 & 2.66704 & 2.69180 & 2.61532 \\
		13 & 2.58755 & 2.65506 & 2.68150 & 2.66704 & 2.69180 \\\hline
		14 & 2.60644 & 2.66995 & 2.70524 & 2.70213 & 2.66704 \\
		15 & 2.62024 & 2.67989 & 2.71199 & 2.72441 & 2.72158 \\
		16 & 2.64381 & 2.70117 & 2.71114 & 2.71629 & 2.72441 \\
		17 & 2.64709 & 2.70062 & 2.73051 & 2.73087 & 2.71629\\
		\bottomrule
	\end{tabular}
	\caption{Growth rates (rounded) of the lower bounds.
		For $n\leq13$ these values are proven to be the best achievable ones; for $n>13$ the values are just the best we found by experiments,
		hence it is possible that there are better ones.\\
		The encodings for the graphs can be found in \Cref{tab:Senc:1-fan,tab:Senc:31-fan,tab:Senc:254-fan,tab:Senc:7916-fan}.%
	}
	\label{tab:spherebounds}
\end{table}

In contrast to the plane we see that for certain fan constructions and certain number of vertices the growth rate does not improve compared to fewer vertices.
For instance the growth rate on the 31-fan with 10 vertices would be worse than the one with 9 vertices.

\begin{figure}[ht]
	\centering
	\begin{tikzpicture}[yscale=0.9]
		\begin{scope}[yscale=15,xscale=1]
			\coordinate (o) at (5,2.29);
			\draw[cola,rounded corners] (3.5,2.22) rectangle (18.6,2.79);
			\node[alabelsty,rotate=90] at ($(0,2.29)!0.5!(0,2.7)+(4,0)$) {growth rate};
			\node[alabelsty] at ($(6,0)!0.5!(17,0)+(0,2.24)$) {$n$};
			\foreach \y in {2.3,2.4,2.5,2.6,2.7} \draw[bline] (5,\y) -- +(-0.2,0) node[left,labelsty] {$\y$};
			\foreach \x in {6,...,17} \draw[bline] (\x,2.3) -- +(0,-0.005) node[below,labelsty] {$\x$};

			\foreach \y [count=\i] in {2.3,2.31,...,2.73}
			{
				\draw[bline] (5,\y)--(18,\y);
			}

			\draw[indline] (13,2.3) -- (13,2.74) node[alabelsty,anchor=east] {\tikz{\draw[aline,solid,-{LaTeX[round]}] (0,0)--(-0.5,0);} complete};

			\draw[colR,gline]
				(6, 2.37841)
			-- (7, 2.2974)
			-- (8, 2.40187)
			-- (9, 2.4794)
			-- (10, 2.50207)
			-- (11, 2.53687)
			-- (12, 2.56418)
			-- (13, 2.58755);
			\draw[colRw,gline]
				(13, 2.58755)
			-- (14, 2.60644)
			-- (15, 2.62024)
			-- (16, 2.64381)
			-- (17, 2.64709);
			\node[valueR] at (6, 2.37841) {};
			\node[valueR] at (7, 2.2974) {};
			\node[valueR] at (8, 2.40187) {};
			\node[valueR] at (9, 2.4794) {};
			\node[valueR] at (10, 2.50207) {};
			\node[valueR] at (11, 2.53687) {};
			\node[valueR] at (12, 2.56418) {};
			\node[valueR] at (13, 2.58755) {};
			\node[valueRw] at (14, 2.60644) {};
			\node[valueRw] at (15, 2.62024) {};
			\node[valueRw] at (16, 2.64381) {};
			\node[valueRw] at (17, 2.64709) {};
			\draw[colB,gline]
				(6, 2.51984)
			-- (7, 2.37841)
			-- (8, 2.49146)
			-- (9, 2.5698)
			-- (10, 2.58342)
			-- (11, 2.61341)
			-- (12, 2.63596)
			-- (13, 2.65506);
			\draw[colBw,gline]
				(13, 2.65506)
			-- (14, 2.66995)
			-- (15, 2.67989)
			-- (16, 2.70117)
			-- (17, 2.70062);
			\node[valueB] at (6, 2.51984) {};
			\node[valueB] at (7, 2.37841) {};
			\node[valueB] at (8, 2.49146) {};
			\node[valueB] at (9, 2.5698) {};
			\node[valueB] at (10, 2.58342) {};
			\node[valueB] at (11, 2.61341) {};
			\node[valueB] at (12, 2.63596) {};
			\node[valueB] at (13, 2.65506) {};
			\node[valueBw] at (14, 2.66995) {};
			\node[valueBw] at (15, 2.67989) {};
			\node[valueBw] at (16, 2.70117) {};
			\node[valueBw] at (17, 2.70062) {};
			\draw[colG,gline]
				(7, 2.51984)
			-- (8, 2.37841)
			-- (9, 2.63902)
			-- (10, 2.61532)
			-- (11, 2.64094)
			-- (12, 2.66704)
			-- (13, 2.6815);
			\draw[colGw,gline]
				(13, 2.6815)
			-- (14, 2.70524)
			-- (15, 2.71199)
			-- (16, 2.71114)
			-- (17, 2.73051);
			\node[valueG] at (7, 2.51984) {};
			\node[valueG] at (8, 2.37841) {};
			\node[valueG] at (9, 2.63902) {};
			\node[valueG] at (10, 2.61532) {};
			\node[valueG] at (11, 2.64094) {};
			\node[valueG] at (12, 2.66704) {};
			\node[valueG] at (13, 2.6815) {};
			\node[valueGw] at (14, 2.70524) {};
			\node[valueGw] at (15, 2.71199) {};
			\node[valueGw] at (16, 2.71114) {};
			\node[valueGw] at (17, 2.73051) {};
			\draw[colP,gline]
				(8, 2.51984)
			-- (9, 2.37841)
			-- (10, 2.63902)
			-- (11, 2.61532)
			-- (12, 2.6918)
			-- (13, 2.66704);
			\draw[colPw,gline]
				(13, 2.66704)
			-- (14, 2.70213)
			-- (15, 2.72441)
			-- (16, 2.71628)
			-- (17, 2.73087);
			\node[valueP] at (8, 2.51984) {};
			\node[valueP] at (9, 2.37841) {};
			\node[valueP] at (10, 2.63902) {};
			\node[valueP] at (11, 2.61532) {};
			\node[valueP] at (12, 2.6918) {};
			\node[valueP] at (13, 2.66704) {};
			\node[valuePw] at (14, 2.70213) {};
			\node[valuePw] at (15, 2.72441) {};
			\node[valuePw] at (16, 2.71628) {};
			\node[valuePw] at (17, 2.73087) {};
			\draw[colX,gline]
				(9, 2.51984)
			-- (10, 2.37841)
			-- (11, 2.63902)
			-- (12, 2.61532)
			-- (13, 2.6918);
			\draw[colXw,gline]
				(13, 2.6918)
			-- (14, 2.66704)
			-- (15, 2.72158)
			-- (16, 2.72441)
			-- (17, 2.71628);
			\node[valueX] at (9, 2.51984) {};
			\node[valueX] at (10, 2.37841) {};
			\node[valueX] at (11, 2.63902) {};
			\node[valueX] at (12, 2.61532) {};
			\node[valueX] at (13, 2.6918) {};
			\node[valueXw] at (14, 2.66704) {};
			\node[valueXw] at (15, 2.72158) {};
			\node[valueXw] at (16, 2.72441) {};
			\node[valueXw] at (17, 2.71628) {};
		\end{scope}

		\begin{scope}
			\coordinate (s) at (0,-0.3);
			\coordinate (l) at ($(o)+(11,3)$);
			\draw[cola,fill=white,rounded corners] ($(l)+(-0.25,0.3)$) rectangle ++(1.65,-1.8);
			\node[valueR,legend,label={[labelsty]0:1-fan}] at (l) {};
			\node[valueB,legend,label={[labelsty]0:7-fan}] at ($(l)+(s)$) {};
			\node[valueG,legend,label={[labelsty]0:31-fan}] at ($(l)+2*(s)$) {};
			\node[valueP,legend,label={[labelsty]0:254-fan}] at ($(l)+3*(s)$) {};
			\node[valueX,legend,label={[labelsty]0:7916-fan}] at ($(l)+4*(s)$) {};
		\end{scope}
	\end{tikzpicture}
	\caption{Growth rates of the lower bounds for realizations on the sphere.
		The light colors indicate values that were not found by exhaustive search and which therefore could possibly
		be improved.}
	\label{fig:spheregrowth}
\end{figure}
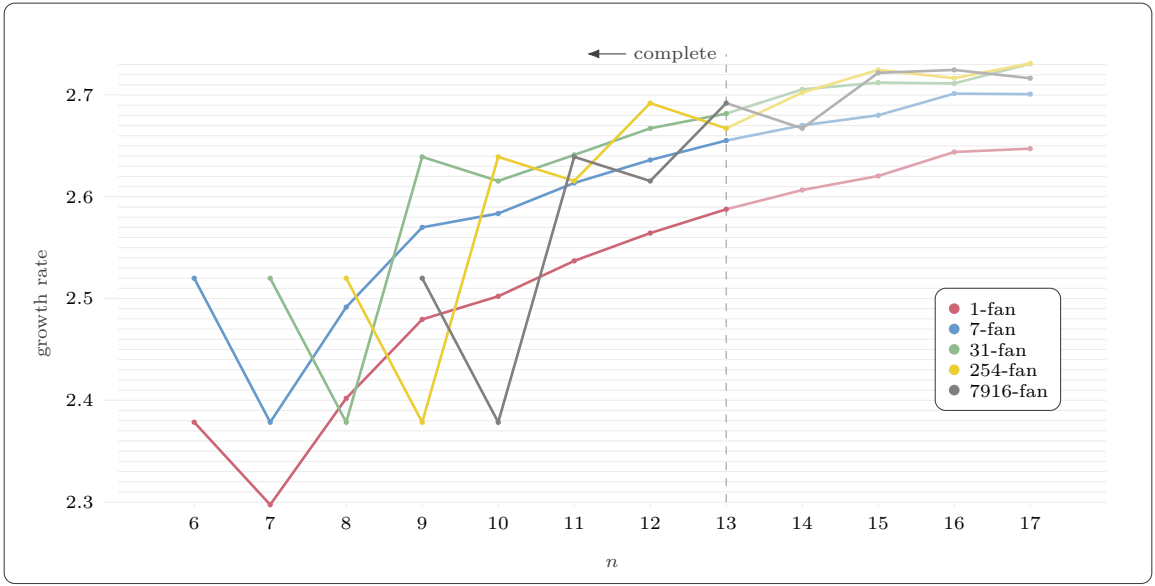

Instantiating \labelcref{eq:sphere_bound_genfan} with the minimally rigid graph 18610043923532523055425244310227943712,
which has 17 vertices and $\lamSII{G}=1376256$ realizations and a three-prism subgraph (254) for gluing,
we obtain the following theorem.

\begin{theorem}
	\label{thm:lower-bd-sphere}
	The maximal number of realizations $\maxlamSIIn$, for $n\geq 5$, satisfies
	\begin{equation*}
		\maxlamSIIn \geq 8\cdot2^{(n-5)\modop 12}\cdot(1376256/8)^{\lfloor(n-5)/12\rfloor}.
	\end{equation*}
	This means $\maxlamSIIn$ grows at least as $\bigl(\!\sqrt[12]{172032}\bigr)^n$ which is approximately $2.73087^n$.
\end{theorem}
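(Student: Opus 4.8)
The plan is to obtain the bound as a direct instantiation of the spherical fan estimate \eqref{eq:sphere_bound_genfan}, using the graph with the stated encoding as an explicit certificate. Recall that \eqref{eq:sphere_bound_genfan} holds for \emph{any} minimally $2$-rigid graph $G=(V,E)$ together with a minimally $2$-rigid subgraph $H=(W,F)$: gluing $k$ copies of $G$ along $H$ produces, by the spherical analogue of \Cref{thm:fanformula}, a graph on $|W|+k(|V|-|W|)$ vertices with exactly $\lamSII{H}\cdot(\lamSII{G}/\lamSII{H})^k$ realizations, and the remaining $(n-|W|)\modop(|V|-|W|)$ vertices are supplied by $0$-extensions, each multiplying the count by $2$. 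Thus it suffices to pin down one good pair $(G,H)$ and substitute its parameters.

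First I would fix the certificate: let $G$ be the $17$-vertex graph with the given encoding and let $H$ be the minimally rigid subgraph used for gluing (encoded $254$), which has $|W|=5$ vertices, so that $|V|-|W|=12$. Three facts must be checked: (i) $G$ is minimally $2$-rigid and $H$ is a minimally $2$-rigid subgraph of $G$ on at least $d=2$ vertices (a purely combinatorial Laman-type verification, which also secures the hypotheses of \Cref{lem:subrc}); (ii) $\lamSII{H}=8$; and (iii) $\lamSII{G}=1376256$. Items (ii) and (iii) are produced by running the spherical realization-counting algorithm of \cite{SphereCount}; as a consistency check, \Cref{lem:subrc} forces $\lamSII{H}$ to divide $\lamSII{G}$, and indeed $1376256/8=172032$.

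With these values in hand, substituting $|W|=5$, $|V|-|W|=12$, $\lamSII{H}=8$ and $\lamSII{G}=1376256$ into \eqref{eq:sphere_bound_genfan} yields, for $n\geq 5$,
\begin{equation*}
	\maxlamSIIn \geq 8\cdot 2^{(n-5)\modop 12}\cdot\left(\frac{1376256}{8}\right)^{\lfloor(n-5)/12\rfloor},
\end{equation*}
which is exactly the claimed inequality. The asymptotic statement then follows from elementary estimates: using $2^{(n-5)\modop 12}\geq 1$ and $\lfloor(n-5)/12\rfloor\geq (n-5)/12-1$, the right-hand side is at least $\tfrac{8}{172032}\cdot\bigl(172032^{1/12}\bigr)^{n}$, so $\maxlamSIIn$ grows at least like $\bigl(\sqrt[12]{172032}\bigr)^n\approx 2.73087^n$.

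The genuinely hard part is not the algebra but step (iii): computing $\lamSII{G}$ for a $17$-vertex graph, since the spherical counting algorithm is exponential and this single evaluation is the computational bottleneck. Everything else --- verifying minimal rigidity of $G$ and $H$, confirming the subgraph relation, and performing the asymptotic simplification --- is routine once the realization count has been certified.
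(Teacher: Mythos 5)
Your proposal is correct and follows exactly the paper's route: the theorem is obtained by instantiating \labelcref{eq:sphere_bound_genfan} with the $17$-vertex certificate graph and the $5$-vertex subgraph $254$ (for which $\lamSII{H}=8$), with the realization counts supplied by the spherical counting algorithm; your additional remarks on verifying minimal rigidity, the divisibility consistency check via \Cref{lem:subrc}, and the asymptotic simplification are all in the spirit of what the paper leaves implicit. The only nitpick is that your final constant $\tfrac{8}{172032}$ drops a harmless extra factor of $172032^{-5/12}$, which does not affect the asymptotic conclusion.
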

Note that we are talking about complex realizations here. A respective bound for real realizations $\sim2.51984^n$ can be found in \cite{Bartzos2018}.
For an upper bound for $\maxlamSIIn$ we know the same as in the plane ($3.4641^n$ from \cite{Bartzos23}).
Note that on the sphere the current gap between lower and upper bound is smaller than in the plane even though we used graphs with fewer vertices.

As a consequence of \Cref{cor:lowerII} and \cite[Thm~1.1]{PlaneSphere} showing that $\lamSII{G}\geq \lamII{G}$ we get.
\begin{corollary}\label{cor:lowerS}
	Let $G$ be a minimally rigid graph with $n\leq 13$ vertices. Then $\lamSII{G}\geq 2^{n-2}$.
\end{corollary}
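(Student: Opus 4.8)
The plan is to chain two inequalities that are already in hand. First I would invoke \Cref{cor:lowerII}, which asserts that every minimally rigid graph $G$ on $n\leq 13$ vertices satisfies the planar bound $\lamII{G}\geq 2^{n-2}$. Second I would apply \cite[Thm~1.1]{PlaneSphere}, which states that for any minimally $2$-rigid graph the spherical realization count dominates the planar one, i.e.\ $\lamSII{G}\geq\lamII{G}$. Combining these immediately yields $\lamSII{G}\geq\lamII{G}\geq 2^{n-2}$, which is exactly the claim.

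Before concluding I would verify that both ingredients genuinely apply to the same objects. This is immediate: as recalled at the start of this section, the minimally rigid graphs on the sphere are precisely the minimally $2$-rigid graphs in the plane, so any graph $G$ eligible for \Cref{cor:lowerII} is also a legitimate input to \cite[Thm~1.1]{PlaneSphere} and conversely. Hence there is no mismatch between the domains of the two results, and the hypothesis $n\leq 13$ transfers without modification.

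There is essentially no obstacle here, since the corollary is a pure composition of two previously established statements. The only substantive content lives in \Cref{cor:lowerII} (which itself rests on the exhaustive computation of $\lamIIfunc$ for all minimally rigid graphs up to $13$ vertices) and in the comparison inequality $\lamSII{G}\geq\lamII{G}$ from \cite{PlaneSphere}; once both are granted, the lower bound $2^{n-2}$ carries over verbatim from the plane to the sphere. I would therefore present the argument as a single-line deduction rather than a multi-step proof.
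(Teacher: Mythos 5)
Your proposal is correct and matches the paper's own derivation exactly: the corollary is obtained by combining \Cref{cor:lowerII} with the inequality $\lamSII{G}\geq\lamII{G}$ from \cite[Thm~1.1]{PlaneSphere}. Nothing further is needed.
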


\subsection{Comparison of plane and sphere}
Since it is the same class of graphs that are minimally rigid on the sphere and in the plane, it is interesting to compare the results of realization counting.
In general for a minimally rigid graph $\lamII{G}$ and $\lamSII{G}$ might be different.
It was proven in \cite{PlaneSphere} that we always have $\lamII{G}\leq\lamSII{G}$.
Also a basic computational analysis has been done there.
For instance the distribution of ratios $\lamSIIfunc/\lamIIfunc$ has been illustrated and the percentage of graphs with $\lamSII{G}\neq\lamII{G}$.
Here we present the results of two more experiments: the comparison of graphs with high number of realizations in the plane and on the sphere and the differences in the distribution of realization numbers.

\subsubsection*{Graphs with high and low number of realizations}
We compare the graphs that have $\lamII{G}=\maxlamIIn$ or $\lamSII{G}=\maxlamSIIn$.
It turns out that the minimally rigid graphs with many realizations in the plane do not necessarily have many on the sphere as well and the other way round.
See \Cref{tab:compare} for a comparison.
\begin{table}[ht]
	\centering\small
	\begin{tabular}{rrrrr}
		\toprule
		$n$ & graph $G$              & $\lamII{G}=\maxlamIIn$ & $\lamSII{G}$ & $\maxlamSIIn$ \\\midrule
		6   & 7916                   & 24                     & 32           & 32    \\
		7   & 1269995                & 56                     & 64           & 64    \\
		8   & 170989214              & 136                    & 192          & 192   \\
		9   & 11177989553            & 344                    & 512          & 576   \\
		10  & 4778440734593          & 880                    & 1536         & 1536  \\
		11  & 18120782205838348      & 2288                   & 4096         & 4352  \\
		12  & 252590061719913632     & 6180                   & 8704         & 12288 \\
		13  & 2731597771584836257824 & 15536                  & 22528        & 34816 \\
		\bottomrule
	\end{tabular}

	\begin{tabular}{rrrrr}
		\toprule
		$n$ & graph $G$               & $\lamS{2}{G}=\maxlamSIIn$ & $\lamII{G}$ & $\maxlamIIn$\\\midrule
		6   & 7916                    & 32                        & 24          & 24    \\
		7   & 1269995                 & 64                        & 56          & 56    \\
		8   & 170989214               & 192                       & 136         & 136   \\
		9   & 2993854888              & 576                       & 320         & 344   \\
		10  & 4778440734593           & 1536                      & 880         & 880   \\
		11  & 18226779293308419       & 4352                      & 1920        & 2288  \\
		12  & 252695476130038944      & 12288                     & 4992        & 6180  \\
		13  & 6128220462188632473600  & 34816                     & 13440       & 15536 \\
		\bottomrule
	\end{tabular}
	\caption{Graphs which achieve the maximal number of realizations $\maxlamIIn$ or $\maxlamSIIn$. Whenever $\maxlamSIIset{n}$ has more than one element, we picked the one with largest realization count in the plane.}
	\label{tab:compare}
\end{table}

When we consider graphs with few realizations we observe the following. For graphs with less than 14 vertices it holds that if $\lam{2}{G}=2^{n-2}$ then also $\lamS{2}{G}=2^{n-2}$.
Furthermore, so far we did not find any graph with $\lam{2}{G}=2^{n-2}\neq\lamS{2}{G}$.

\subsubsection*{Distribution of realization numbers}
We now analyze which realization numbers appear.
This means for a given number of vertices we compute all realization numbers and check how many graphs do have the same number.
Indeed we only use graphs with minimum degree 3.
In the plane for graphs with 8 vertices we get 10 different values for $\lamIIfunc$.
The realization number that appears most is 96 with 10 out of 32 graphs achieving it.
In \Cref{tab:mostlam} we collect this information for all graphs up to 13 vertices.
What is interesting is, that the most common realization number is $3\cdot2^{|V|-3}$. However, the relative amount of graphs achieving this number is decreasing with the number of vertices.
\begin{table}[ht]
	\centering\small
	\begin{tabular}{rrrrrr}
		\toprule
		$n$ & $|\setDn|$ & most frequent $\lamIIfunc$ & \# graphs & \% & different $\lamIIfunc$ \\\midrule
		8   & 32         & 96       & 10      & 31.25 & 10   \\
		9   & 264        & 192      & 59      & 22.35 & 29   \\
		10  & 3189       & 384      & 571     & 17.90 & 102  \\
		11  & 46677      & 768      & 6179    & 13.24 & 401  \\
		12  & 813875     & 1536     & 77980   &  9.58 & 1529 \\
		13  & 16142835   & 3072     & 1095177 &  6.78 & 4973 \\
		\bottomrule
	\end{tabular}
	\caption{Realization numbers in the plane that are obtained most often within graphs from a given number of vertices.
	The second column shows the number of minimally rigid graphs with minimum degree 3 and the respective number of vertices $\setDn$.
	The third column shows the value of $\lamIIfunc$ that occurs most often in that set and the fourth column tells by how many graphs. The last column presents how many different realization numbers we get.}
	\label{tab:mostlam}
\end{table}

It turns out that on the sphere there are much viewer different numbers.
\Cref{tab:mostlams} shows the results for the sphere. Here the number that is obtained most often is $2^{|V|-1}$.
\begin{table}[ht]
	\centering\small
	\begin{tabular}{rrrrrr}
		\toprule
		$n$ & $|\setDn|$ & most $\lamSIIfunc$ & \# graphs & \% & different $\lamSIIfunc$ \\\midrule
		8   & 32         & 128      & 18      & 56.25 & 6   \\
		9   & 264        & 256      & 125     & 47.35 & 11  \\
		10  & 3189       & 512      & 1217    & 38.16 & 24  \\
		11  & 46677      & 1024     & 13522   & 28.97 & 47  \\
		12  & 813875     & 2048     & 174080  & 21.39 & 114 \\
		13  & 16142835   & 4096     & 2502769 & 15.50 & 259 \\
		\bottomrule
	\end{tabular}
	\caption{Realization numbers on the sphere that are obtained most often within graphs from a given number of vertices.
	The second column shows the number of minimally rigid graphs with minimum degree 3 and the respective number of vertices.
	The third column shows the $\lamIIfunc$ that occurs most often in that set and the fourth column tells by how many graphs. The last column presents how many different realization numbers we get.}
	\label{tab:mostlams}
\end{table}
In \Cref{fig:distribution} we show the overall distribution for the realization numbers in the plane and on the sphere for graphs with up to 13 vertices.
\begin{figure}[ht]
	\centering
	\includegraphics[height=4cm]{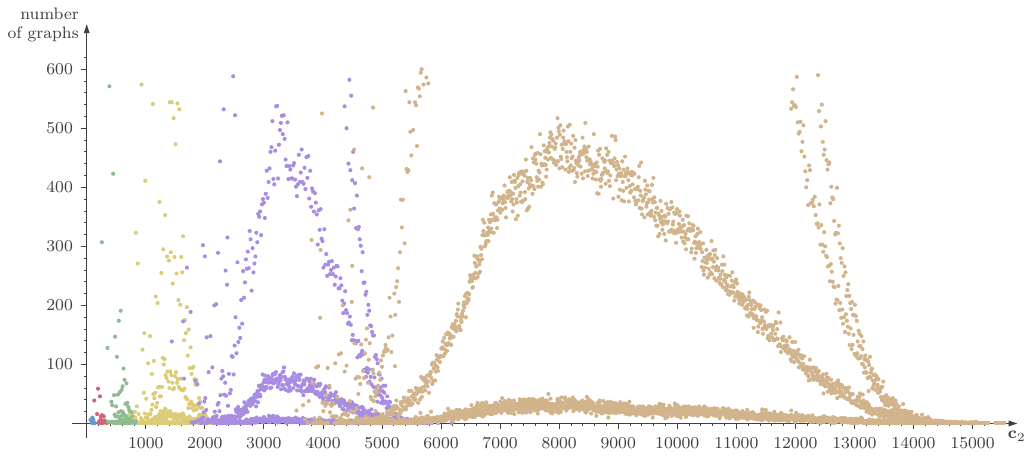}%
	\includegraphics[height=4cm]{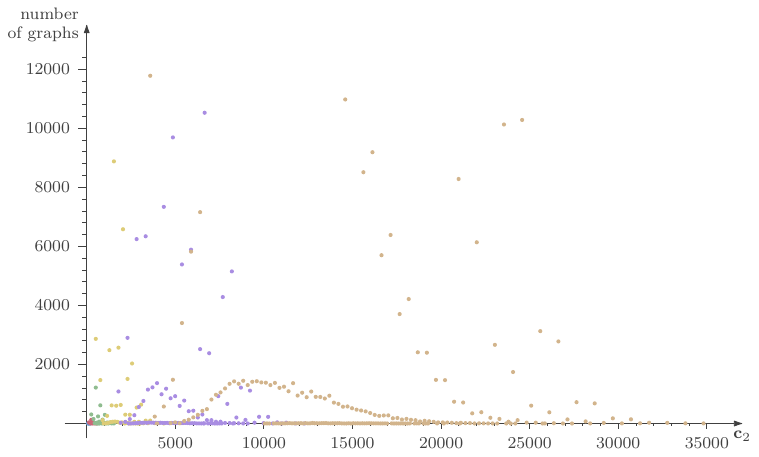}
	\caption{The distribution of different numbers of realizations in the plane (left) and on the sphere (right). The horizontal axis represents the different realization numbers while the vertical axis shows how often they appear. The colors differ between the number of vertices from 8 to 13. Note that the plots do not show the full range, in order to visualize patterns.}
	\label{fig:distribution}
\end{figure}

\subsubsection*{Ratios between plane and sphere counts}
In \cite{PlaneSphere} the ratio between $\lamfunc{2}$ and $\lamSfunc{2}$ was analyzed.
In particular the maximal possible ratio for a given number of vertices is of interest.
Let accordingly
\begin{equation*}
	\theta_d(n) := \max_{G\in\setM{d}{n}} \frac{\lamS{d}{G}}{\lam{d}{G}}\,.
\end{equation*}
It was shown in \cite{PlaneSphere} that for each positive integer $d$ there is an $\alpha$ such that $\theta_d(n)\in\mathcal{O}(\alpha^n)$.
Defining $\alpha_d$ to be the infimum of all the possible $\alpha$ they derive a bound for $\alpha_2$, to be $\alpha_2\geq(4/3)^{3/8}$ which is around $1.11391$.
This bound was again obtained by using fan constructions on a triangle subgraph.
Indeed we can, with the new data sets, improve this bound now.
We use the graph $G$ with integer representation $1290592463576136176277725405703$ which has $\lamII{G}=62208$ and $\lamSII{G}=262144$ and $15$ vertices.
A fan construction on a triangle subgraph yields the bound $\alpha_2\geq (4/3)^{5/12}$ which is around $1.12735$ and therefore a slight improvement.
Due to \Cref{thm:fanformula} we can use fans on larger minimally rigid subgraphs as well and get the following even better bound of around $1.21141$.
\begin{lemma}
	$\alpha_2\geq (4/3)^{2/3}$
\end{lemma}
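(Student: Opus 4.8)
The plan is to exhibit a single fan family whose sphere-to-plane ratio grows geometrically in the number of vertices, and to read $\alpha_2$ off the growth rate. I would fix a minimally rigid graph $G=(V,E)$ together with a minimally rigid subgraph $H=(W,F)$ on at least two vertices, and let $G_k$ be the fan obtained by gluing $k$ copies of $G$ along $H$; this graph is minimally rigid and has $n_k=|W|+k(|V|-|W|)$ vertices. The step I would invoke is \Cref{thm:fanformula}, noting that both its statement and the proof of \Cref{lem:subrc} are phrased entirely through the rigidity-map formalism of \cite{PlaneSphere}, which is set up uniformly for $\C^d$ and for the sphere; hence the same multiplicativity holds in both settings:
\begin{equation*}
	\lamII{G_k}=\lamII{H}\left(\frac{\lamII{G}}{\lamII{H}}\right)^{\!k},\qquad
	\lamSII{G_k}=\lamSII{H}\left(\frac{\lamSII{G}}{\lamSII{H}}\right)^{\!k}.
\end{equation*}

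Next I would take the quotient of the two counts. The prefactors merge into one constant and the $k$-th powers combine, so that
\begin{equation*}
	\frac{\lamSII{G_k}}{\lamII{G_k}}=\frac{\lamSII{H}}{\lamII{H}}\left(\frac{\lamSII{G}\,\lamII{H}}{\lamII{G}\,\lamSII{H}}\right)^{\!k}=C\,\beta^{\,n_k},\qquad
	\beta:=\left(\frac{\lamSII{G}\,\lamII{H}}{\lamII{G}\,\lamSII{H}}\right)^{\!1/(|V|-|W|)},
\end{equation*}
for a positive constant $C$ independent of $k$, using $k=(n_k-|W|)/(|V|-|W|)$. Since $G_k\in\setM{2}{n_k}$ we have $\theta_2(n_k)\geq\lamSII{G_k}/\lamII{G_k}=C\beta^{\,n_k}$. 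If $\theta_2(n)\in\mathcal O(\alpha^n)$, then $C\beta^{\,n_k}\leq C'\alpha^{n_k}$ for all $k$, forcing $(\beta/\alpha)^{n_k}\leq C'/C$; letting $n_k\to\infty$ gives $\alpha\geq\beta$, and taking the infimum over admissible $\alpha$ yields $\alpha_2\geq\beta$.

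It then remains to produce a certificate pair $(G,H)$ with $\beta=(4/3)^{2/3}$. The triangle and the subgraph $254$ are built from an edge by $0$-extensions and therefore satisfy $\lamSII{}=\lamII{}$ (both equal $2^{n-2}$), so gluing along them contributes nothing through $H$ and exploits only the global ratio $\lamSII{G}/\lamII{G}$; this is exactly what produced the weaker $(4/3)^{5/12}$ bound. The gain comes from gluing along a \emph{larger} $H$ with $\lamSII{H}>\lamII{H}$, which lowers $|V|-|W|$ and hence raises the exponent $1/(|V|-|W|)$. Concretely it suffices to certify one pair with
\begin{equation*}
	\frac{\lamSII{G}\,\lamII{H}}{\lamII{G}\,\lamSII{H}}=\left(\tfrac43\right)^{2(|V|-|W|)/3},
\end{equation*}
for instance $|V|-|W|=3$ together with $\lamSII{G}\,\lamII{H}/(\lamII{G}\,\lamSII{H})=16/9$. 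I expect the genuine obstacle to be computational rather than conceptual: one must locate such a pair among the computed graphs and confirm that the resulting fan is minimally rigid, so that $\lamII{G_k}$ and $\lamSII{G_k}$ are well defined and the fan formula applies. Once a certifying pair is named together with its four realization counts, the bound $\alpha_2\geq(4/3)^{2/3}$ is immediate from the displayed growth rate.
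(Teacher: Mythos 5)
Your asymptotic framework is sound and in fact more careful than the paper's own computation. Along the fan family the ratio is indeed
\begin{equation*}
	\frac{\lamSII{G_k}}{\lamII{G_k}}=\frac{\lamSII{H}}{\lamII{H}}\left(\frac{\lamSII{G}\,\lamII{H}}{\lamII{G}\,\lamSII{H}}\right)^{\!k},
\end{equation*}
so the per-vertex growth rate is your $\beta$, the passage from $\theta_2(n_k)\geq C\beta^{n_k}$ to $\alpha_2\geq\beta$ is correct, and the multiplicativity on the sphere is justified exactly as you say. The gap is that you never exhibit the certificate pair, and here that omission is not routine bookkeeping: the certificate is the entire content of the lemma, and a pair satisfying \emph{your} criterion is strictly harder to find than you suggest. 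The paper's proof takes $G$ to be the $9$-vertex graph $6917588647$ with $\lamII{G}=288$, $\lamSII{G}=512$, glued on the three-prism $H$ with $\lamII{H}=24$, $\lamSII{H}=32$, and reads off the rate as $(\lamSII{G}/\lamII{G})^{1/3}=(16/9)^{1/3}=(4/3)^{2/3}$, i.e.\ without the correction by $\lamSII{H}/\lamII{H}$. That shortcut agrees with your formula only when $\lamSII{H}=\lamII{H}$ (as for the triangle and for $254$, which is why the $(4/3)^{5/12}$ bound checks out), but for the three-prism $\lamSII{H}/\lamII{H}=4/3$, and your (correct) formula gives
\begin{equation*}
	\beta=\left(\frac{512\cdot 24}{288\cdot 32}\right)^{1/3}=\left(\frac{4}{3}\right)^{1/3}\approx 1.1006,
\end{equation*}
which is \emph{weaker} than the triangle-fan bound $(4/3)^{5/12}\approx 1.1274$. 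So you cannot close your argument by pointing at the paper's certificate: by your own criterion it does not certify $(4/3)^{2/3}$, and no pair with $\lamSII{G}\,\lamII{H}/(\lamII{G}\,\lamSII{H})=(4/3)^{2(|V|-|W|)/3}$ is identified anywhere in the available data. Until such a pair is named with its four realization counts verified, the lemma is not proved by your route.
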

\begin{proof}
	The graph $G$ with integer representation $6917588647$ has $\lamII{G}=288$ and $\lamSII{G}=512$ and $9$ vertices.
	A fan construction on a three prism subgraph on 6 vertices yields the bound
	\begin{equation*}
		\left(\frac{\lamSII{G}}{\lamII{G}}\right)^{1/(9-6)}=\left(\frac{512}{288}\right)^{1/3}=\left(\frac{4}{3}\right)^{2/3}\,.\qedhere
	\end{equation*}
\end{proof}

\section{Realizations in space}
\label{sec:space}
In fan constructions in dimension three we cannot glue on a common edge because then we would get a flexible graph. So the most basic fan construction glues two graphs by a common triangle (7-fan).
For the 63-fan we use a common tetrahedron to glue at and the 511-fan construction needs the unique minimally 3-rigid graph with $5$ vertices.
See \Cref{fig:smallm3rinteger} for pictures and graph encodings for small minimally 3-rigid graphs.

In dimension three and higher we do not have combinatorial algorithms available for computing the realization counts.
What we do instead is counting the number of solutions of the system of edge lengths equations for a given graph $G=(V,E)$:
\begin{align*}
	(x_i-x_j)^2 + (y_i-y_j)^2 &= \lambda_{ij} \forall \{i,j\}\in E,\\
	x_1=y_1=x_2&=0,
\end{align*}
where $\lambda_{ij}$ represents the edge lengths of the edge $\{i,j\}$.
We use Gröbner bases for counting these solutions.
Keeping the edge lengths symbolic would not be feasible so instead we take random edge lengths.
We need to be aware however, that this makes it a probabilistic method.
Furthermore, we do computations modulo a sufficiently large prime as in \cite{LowerBounds}.
To be confident in the computed numbers we did all computations at least ten times.
In the computation we took advantage of leading monomial computation of \msolve\ \cite{msolve} with pre- and postprocessing using \cite{RigiComp} to get the actual number of realizations.

Already in \cite{LowerBounds} the realization counts for all minimally 3-rigid graphs with 10 or less vertices have been computed.
Using \msolve\ we could go a little further.

\Cref{tab:bounds3d} summarizes the growth rates we obtained from experiments and \Cref{fig:3dgrowth} illustrates them.
By growth rate we mean $\left(\lamIII{G}/\lamIII{H}\right)^{1/(|V|-|W|}$.

\begin{table}[ht]
	\centering\small
	\begin{tabular}{rllllll}
		\toprule
		$n$ & 7-fan  & 63-fan  & 511-fan \\\midrule
		6  & 2.51984 & 2.00000 & 2.00000 \\
		7  & 2.63215 & 2.51984 & 2.00000 \\
		8  & 2.75946 & 2.63215 & 2.51984 \\
		9  & 2.93560 & 2.95155 & 2.82843 \\
		10 & 3.06826 & 3.06681 & 2.95155 \\\hline
		11 & 3.15140 & 3.16764 & 3.12314 \\
		12 & 3.35787 & 3.30791 & 3.28134\\
		\bottomrule
	\end{tabular}
	\caption{Growth rates (rounded) of the lower bounds. The encodings for the graphs on 11 and 12 vertices can be found in \Cref{tab:enc:fan3d}. The results for fewer vertices are from \cite{LowerBounds}.}
	\label{tab:bounds3d}
\end{table}
From \Cref{tab:bounds3d} we can see the bound of $2.51984^n$
obtained in \cite{Emiris2009}, $3.06825^n$ from \cite{LowerBounds} and the improvements obtained in this
paper.

\begin{figure}[ht]
	\centering
	\begin{tikzpicture}
		\begin{scope}[yscale=4]
			\coordinate (o) at (5,2.5);
			\draw[cola,rounded corners] (3.5,2.22) rectangle (13.8,3.55);
			\node[alabelsty,rotate=90] at ($(0,2.5)!0.5!(0,3.4)+(4,0)$) {growth rate};
			\node[alabelsty] at ($(5,0)!0.5!(12,0)+(0,2.3)$) {$n$};

			\foreach \y in {2.5,2.7,2.9,3.1,3.3} \draw[bline] (5,\y) -- +(-0.1,0) node[left,labelsty] {$\y$};
			\foreach \x in {5,...,12} \draw[bline] (\x,2.5) -- +(0,-0.005) node[below,labelsty] {$\x$};

			\foreach \y [count=\i] in {2.5,2.6,...,3.4}
			{
				\draw[bline] (5,\y)--(13,\y);
			}
			\draw[indline] (10,2.5) -- (10,3.45) node[alabelsty,anchor=east] {\tikz{\draw[aline,solid,-{LaTeX[round]}] (0,0)--(-0.5,0);} complete};
			\draw[indline] (5,3.06825) -- (13.25,3.06825) node[right,alabelsty,anchor=west,rotate=90] {new \tikz{\draw[aline,solid,-{LaTeX[round]}] (0,0)--(0.5,0);}};

			\draw[colR,gline]
				(6, 2.51984)
			-- (7, 2.63215)
			-- (8, 2.75946)
			-- (9, 2.9356)
			-- (10, 3.06825);
			\draw[colRw,gline]
				(10, 3.06825)
			-- (11, 3.1514)
			-- (12, 3.35787);
			\node[valueR] at (6, 2.51984) {};
			\node[valueR] at (7, 2.63215) {};
			\node[valueR] at (8, 2.75946) {};
			\node[valueR] at (9, 2.9356) {};
			\node[valueR] at (10, 3.06825) {};
			\node[valueRw] at (11, 3.1514) {};
			\node[valueRw] at (12, 3.35787) {};
			\draw[colB,gline]
				(7, 2.51984)
			-- (8, 2.63215)
			-- (9, 2.95155)
			-- (10, 3.06681);
			\draw[colBw,gline]
				(10, 3.06681)
			-- (11, 3.16764)
			-- (12, 3.30791);
			\node[valueB] at (7, 2.51984) {};
			\node[valueB] at (8, 2.63215) {};
			\node[valueB] at (9, 2.95155) {};
			\node[valueB] at (10, 3.06681) {};
			\node[valueBw] at (11, 3.16764) {};
			\node[valueBw] at (12, 3.30791) {};
			\draw[colG,gline]
				(8, 2.51984)
			-- (9, 2.82843)
			-- (10, 2.95155);
			\draw[colGw,gline]
				(10, 2.95155)
			-- (11, 3.12314)
			-- (12, 3.28134);
			\node[valueG] at (8, 2.51984) {};
			\node[valueG] at (9, 2.82843) {};
			\node[valueG] at (10, 2.95155) {};
			\node[valueGw] at (11, 3.12314) {};
			\node[valueGw] at (12, 3.28134) {};
		\end{scope}

		\begin{scope}
			\coordinate (s) at (0,-0.3);
			\coordinate (l) at ($(o)+(6.25,1.25)$);
			\draw[cola,fill=white,rounded corners] ($(l)+(-0.25,0.3)$) rectangle ++(1.65,-1.2);
			\node[valueR,label={[labelsty]0:7-fan}] at (l) {};
			\node[valueB,label={[labelsty]0:63-fan}] at ($(l)+(s)$) {};
			\node[valueG,label={[labelsty]0:511-fan}] at ($(l)+2*(s)$) {};
		\end{scope}
	\end{tikzpicture}
	\caption{Growth rates of the lower bounds for realizations in three space.
		The light colors indicate values that were not found by exhaustive search and which therefore could possibly
		be improved. The horizontal dashed line indicates the lower bound known from \cite{LowerBounds}.}
	\label{fig:3dgrowth}
\end{figure}
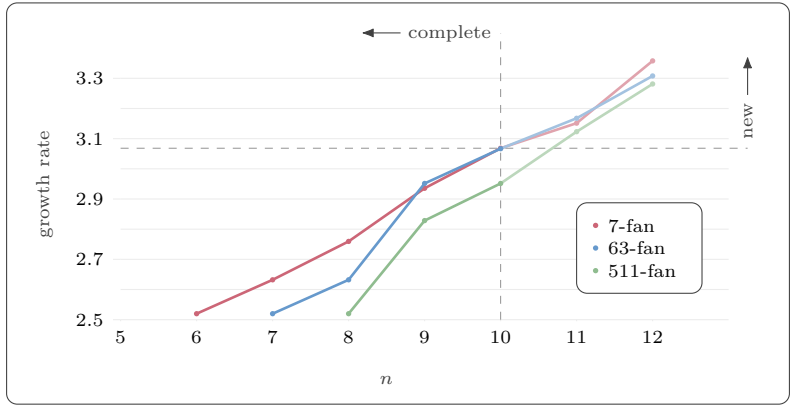

The best bound we found is achieved by a 7-fan construction on the minimally 3-rigid icosahedron graph with integer representation 1151802456431434509 which has 12 vertices and 54272 complex realizations. This value was also computed by \cite{Bartzos2020} using a homotopy solver and a combinatorial upper bound.
Using this graph we get the following rigorous result from \labelcref{eq:bound_genfan}.
\begin{theorem}
	\label{thm:lower-bd-3d}
	The maximal number of realizations $\maxlamIIIn$, for $n\geq 3$, satisfies
	\begin{equation*}
		\maxlamIIIn \geq 1\cdot2^{(n-3)\modop 9}\cdot(54272/1)^{\lfloor(n-3)/9\rfloor}.
	\end{equation*}
	Hence, $\maxlamIIIn$ grows at least as $\bigl(\!\sqrt[9]{54272}\bigr)^n$ which is approximately $3.3579^n$.
	In other words $\bigl(\!\sqrt[9]{54272}\bigr)^n\in\mathcal O(\maxlamIIIn)$.
\end{theorem}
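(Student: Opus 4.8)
The plan is to obtain the bound as a direct instantiation of the general fan estimate \labelcref{eq:bound_genfan} in dimension $d=3$, so that the whole statement reduces to substituting three pieces of data and performing one short asymptotic estimate. Concretely, I would take the base graph $G$ to be the icosahedron on $|V|=12$ vertices and the gluing graph $H$ to be a triangular face of it, which is precisely the $7$-fan construction. Since \labelcref{eq:bound_genfan} was already derived from \Cref{thm:fanformula} and \Cref{lem:subrc} together with the fact that each $3$-dimensional $0$-extension (joining a new vertex to $d=3$ existing ones) doubles the realization count, no further structural work is needed beyond checking that the chosen $G$ and $H$ satisfy the hypotheses.

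First I would verify the three ingredients required by the formula. The triangle $H$ is minimally $3$-rigid with $|W|=3$ vertices: it has exactly $3\cdot 3-\binom{4}{2}=3$ edges, the correct count for minimal $3$-rigidity, it is a proper subgraph of $G$, and it meets the condition $|W|\geq d$ of \Cref{lem:subrc} (with equality). Its realization count is $\lamIII{H}=1$: a triangle spans a plane and is determined up to congruence by its three edge lengths, and crucially in dimension three its two mirror placements are related by a rotation out of the plane, so the two planar reflections collapse to a single class even under the convention that counts reflections as distinct. Finally, the icosahedron is minimally $3$-rigid and satisfies $\lamIII{G}=54272$; this is the computational core and I would obtain it through the Gröbner-basis/\msolve\ pipeline described above, running the computation several times modulo a large prime.

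With these data the substitution is immediate. Setting $|W|=3$, $|V|-|W|=9$, $\lamIII{H}=1$, and $\lamIII{G}=54272$ in \labelcref{eq:bound_genfan} yields, for all $n\geq 3=|W|$,
\begin{equation*}
	\maxlamIIIn \geq 2^{(n-3)\modop 9}\cdot 54272^{\lfloor(n-3)/9\rfloor},
\end{equation*}
which is exactly the claimed inequality. For the asymptotic statement I would note that the modular factor $2^{(n-3)\modop 9}\leq 2^{8}$ is bounded, while $\lfloor(n-3)/9\rfloor=(n-3)/9+O(1)$, so the dominant behaviour is $54272^{(n-3)/9}=\bigl(\!\sqrt[9]{54272}\bigr)^{n-3}$, giving base $\sqrt[9]{54272}\approx 3.3579$ and hence $\bigl(\!\sqrt[9]{54272}\bigr)^{n}\in\mathcal O(\maxlamIIIn)$.

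The only genuine obstacle lies in the value $\lamIII{G}=54272$: unlike the planar case there is no combinatorial algorithm in dimension three, so this number rests on a probabilistic symbolic computation rather than an exact count. I would therefore stress that the bound is rigorous once this single integer is accepted, and, as a safeguard against the probabilistic nature of the method, I would point out that the same value for the icosahedron was obtained independently in \cite{Bartzos2020} by a homotopy solver together with a matching combinatorial upper bound, which removes any residual doubt.
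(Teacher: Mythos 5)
Your proposal matches the paper's proof exactly: the paper likewise instantiates \labelcref{eq:bound_genfan} with the $7$-fan on the minimally $3$-rigid icosahedron ($12$ vertices, $\lamIII{G}=54272$, computed probabilistically and cross-checked against \cite{Bartzos2020}) glued along a triangle with $\lamIII{H}=1$. Your additional verifications (minimal $3$-rigidity of the triangle, the collapse of the two mirror images of a triangle under a spatial rotation, and the boundedness of the modular factor for the asymptotics) are all correct and consistent with the paper.
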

Note that we are talking about complex realizations here. A respective bound for real realizations $\sim2.6553^n$ can be found in \cite{Bartzos2018}.
The currently known best upper bound for $\maxlamIIIn$ is approximately $6.32^n$ \cite{Bartzos23}.
Hence, there is still a huge gap to be investigated.

In \cite{LowerBounds,Jackson2018} it was conjectured that $\lamII{G}\geq 2^{n-2}$. The currently available data supports this conjecture.
One would be inclined to conjecture that $\lamIII{G}\geq 2^{n-3}$ which is wrong by the graph in \Cref{fig:counterexamplelowerbound} on 8 vertices (31965132) which has $24\leq 32=2^{8-3}$.
Such graphs can exist because there are construction steps which increase the number of realizations by a factor less than two (see \Cref{sec:factors}).
While in dimension two it seems that such construction steps only happen for graphs with reasonably high realization count, in dimension three the situation is quite different.
Hence, it does also make sense to analyze the minimum $\lamIIIfunc$ for given number of vertices $n$.

\begin{figure}[ht]
	\centering
	\begin{tikzpicture}[scale=1.5]
		\node[vertex] (1) at (0, 0.5) {};
		\node[vertex] (2) at (0, 1) {};
		\node[vertex] (3) at (1.5, 0.5) {};
		\node[vertex] (4) at (1.5, 1) {};
		\node[vertex] (5) at (0.75, 1.5) {};
		\node[vertex] (6) at (0.75, 0.5) {};
		\node[vertex] (7) at (0.75, 0.) {};
		\node[vertex] (8) at (0.75, 1) {};
		\draw[edge] (1)edge(5) (1)edge(6) (1)edge(7) (1)edge(8) (2)edge(5) (2)edge(6) (2)edge(7) (2)edge(8) (3)edge(5) (3)edge(6) (3)edge(7) (3)edge(8) (4)edge(5) (4)edge(6) (4)edge(7) (4)edge(8) (5)edge(8) (6)edge(7);
	\end{tikzpicture}
	\hspace{2cm}
	\begin{tikzpicture}[scale=0.8]
		\node[vertex] (1) at (0.75, 0) {};
		\node[vertex] (2) at (0, 0.75) {};
		\node[vertex] (3) at (-0.75, 0) {};
		\node[vertex] (4) at (0, -0.75) {};
		\foreach \i in {1,2,...,6}
		{
			\node[vertex] (v\i) at (60*\i+20:2) {};
			\draw[edge] (v\i)--(1) (v\i)--(2) (v\i)--(3) (v\i)--(4);
		}
	\end{tikzpicture}

	\caption{A minimally 3-rigid graph with $\lamIII{G}=24$ and one with $\lamIII{G}=76$.}
	\label{fig:counterexamplelowerbound}
\end{figure}
\Cref{tab:enc3min:1-fan} collects graph representatives which obtain the minimal realization number that we found so far for a given number of vertices.
Since data is complete for graphs with less than 11 vertices, we know that $\minlamIII{8}=24$, $\minlamIII{9}=48$ and $\minlamIII{10}=76$.
\begin{table}[ht]
	\centering\scriptsize
	\begin{tabular}{rrr}
		\toprule
		n  & graph encoding & $\lamIIIfunc$ \\\midrule
		8 & 31965132 & 24 \\
		9 & 1911013324 & 48 \\
		10 & 1034850648000 & 76 \\\midrule
		11 & 1725069083000768 & 152 \\
		12 & 10413379983099525195 & 304 \\
		13 & 1993222090148801645346 & 576 \\
		14 & 116992142554246520151559023 & 1152 \\
		15 & 131237666349318820678678431903 & 2304 \\
		16 & 1176451566300935565578955122609301 & 3648 \\
		17 & 1032841673513196991793263473349565121663 & 7296 \\
		18 & 2526294840809376540865170270027768169302165 & 11552 \\\bottomrule
	\end{tabular}
	\caption{Graphs with minimal number of complex realizations for given number of vertices $n\leq 10$ and graphs with a smallest known number for $n\geq 11$.}
	\label{tab:enc3min:1-fan}
\end{table}

In \cite[Thm~1.2]{PlaneSphere} it was proven that if a graph $G$ has a vertex $o$ adjacent to every other vertex then $\lamIII{G}=\lamSII{G\setminus o}$.
When $G\setminus o$ has less than 14 vertices we know that $\lamSII{G\setminus o}\geq 2^{n-3}$ when $n$ is the number of vertices of $G$.
From this we get the following corollary using \Cref{cor:lowerS}.
\begin{corollary}\label{cor:lowerIII}
	Let $G$ be a minimally 3-rigid graph with $n\leq 14$ vertices and let $o$ be a vertex with $\deg(o)=n-1$.
	Then $\lamIII{G}\geq 2^{n-3}$.
\end{corollary}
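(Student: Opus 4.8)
The plan is to chain the coning identity \cite[Thm~1.2]{PlaneSphere} with the spherical lower bound already established in \Cref{cor:lowerS}, so the argument is essentially a two-step substitution. Write $H:=G\setminus o$.

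First I would identify $o$ as a cone apex. Since $o$ is adjacent to all $n-1$ other vertices, $G$ is precisely the cone over $H$. By the coning theorem (Whiteley), coning carries minimally 2-rigid graphs to minimally 3-rigid graphs with such a full-degree apex and reflects this property; hence from $G$ being minimally 3-rigid with apex $o$ we may conclude that $H$ is minimally 2-rigid, on $n-1$ vertices. This structural fact is the one ingredient not literally contained in the excerpt, and I expect it to be the only step requiring genuine care — everything afterward is direct.

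Next I would apply \cite[Thm~1.2]{PlaneSphere} to $G$. Because $G$ has a vertex $o$ of degree $n-1$, the theorem gives the identity $\lamIII{G}=\lamSII{H}$, reducing the three-dimensional count to a spherical count for the smaller graph $H$.

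Finally I would invoke the vertex bound. As $n\leq 14$, the graph $H$ has $n-1\leq 13$ vertices and is minimally rigid, so \Cref{cor:lowerS} applies and yields $\lamSII{H}\geq 2^{(n-1)-2}=2^{n-3}$. Combining this with the coning identity gives $\lamIII{G}=\lamSII{H}\geq 2^{n-3}$, which is exactly the claimed inequality. The restriction $n\leq 14$ enters solely through the vertex cap $n-1\leq 13$ in \Cref{cor:lowerS}, so any future extension of that corollary to more vertices would immediately extend this bound as well.
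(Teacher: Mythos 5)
Your argument is exactly the paper's: apply the coning identity $\lamIII{G}=\lamSII{G\setminus o}$ from \cite[Thm~1.2]{PlaneSphere} and then the spherical bound of \Cref{cor:lowerS} to $G\setminus o$, which has $n-1\leq 13$ vertices. The only difference is that you make explicit the (correct) justification via Whiteley's coning theorem that $G\setminus o$ is minimally 2-rigid, a step the paper leaves implicit.
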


When we build a 7-fan with $k$ copies of the graph 31965132 then we know that $\lamIIIfunc$ of the resulting graph is $24^k$, which means that the graph asymptotically has around $\sqrt[5]{24}^n\sim 1.88^n$ complex realizations.
The same we can do for larger graphs.
Let $\minlamIIIn$ be the minimal realization number $\lamIIIfunc$ for all graphs with $n$ vertices.
Using a fan-construction with the graph with encoding
\begin{equation*}
	2526294840809376540865170270027768169302165
\end{equation*}
which has 18 vertices and 11552 realizations we get the following.
\begin{theorem}
	The minimal number of realizations $\minlamIIIn$ satisfies
	\begin{equation*}
		\minlamIIIn \leq 1\cdot 2^{(n-3)\modop 15}\cdot(11552/1)^{\lfloor(n-3)/15\rfloor}.
	\end{equation*}
	Hence, $\minlamIIIn$ grows at most as $\bigl(\!\sqrt[15]{11552}\bigr)^n$ which is approximately $1.86571^n$.
	\label{thm:minlamIII}
\end{theorem}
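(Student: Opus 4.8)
The plan is to exhibit, for each $n\geq 3$, one explicit minimally 3-rigid graph on $n$ vertices whose realization count equals the right-hand side of the claimed inequality; since any such graph is a witness, its count automatically upper-bounds $\minlamIIIn$. This mirrors the construction behind \labelcref{eq:bound_genfan}, but read as an upper bound on the minimum rather than a lower bound on the maximum, so there is nothing to optimize over — I simply substitute the chosen base graph into the same fan-plus-0-extension recipe and record the count it produces.

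Concretely, I would take $G$ to be the 18-vertex graph with the stated encoding, for which the (probabilistic Gröbner-basis) computation gives $\lamIII{G}=11552$, together with a triangle subgraph $H=K_3$. A triangle has $3=d$ vertices, exactly $3=3\cdot 3-6$ edges, and is therefore minimally 3-rigid with the unique realization count $\lamIII{H}=1$. Gluing $k:=\lfloor(n-3)/15\rfloor$ copies of $G$ along this common triangle yields a minimally 3-rigid graph on $3+15k$ vertices, and by \Cref{thm:fanformula} its realization count is $\lamIII{H}\,(\lamIII{G}/\lamIII{H})^{k}=11552^{k}$.

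To reach exactly $n$ vertices I would then apply $(n-3)\modop 15=n-3-15k$ successive 0-extensions. In dimension three each 0-extension attaches a new vertex to three existing ones, preserves minimal 3-rigidity, and multiplies the realization count by $2$, since the new vertex is generically the intersection of three spheres. The resulting graph is thus a minimally 3-rigid graph on $n$ vertices with realization count $2^{(n-3)\modop 15}\cdot 11552^{\lfloor(n-3)/15\rfloor}$, giving the stated inequality. The asymptotics follow by discarding the bounded prefactor, leaving the dominant term $11552^{(n-3)/15}=(\sqrt[15]{11552})^{\,n-3}$ with $\sqrt[15]{11552}\approx 1.86571$.

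The substantive content is not in the assembly, which is a direct application of \Cref{thm:fanformula} and the 0-extension factor, but in the two inputs feeding it: that $G$ genuinely satisfies $\lamIII{G}=11552$ and that it contains a triangle on which the fan can be built. The first rests on the modular probabilistic computation (repeated several times for confidence), and the second is a structural check on the encoded graph. I expect verifying the realization count to be the main — purely computational — obstacle, since the combinatorial facts that fan gluing on a common minimally 3-rigid subgraph and that 0-extensions both preserve minimal 3-rigidity are already standard, and the multiplicativity of the count is exactly \Cref{thm:fanformula}.
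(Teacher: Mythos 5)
Your proposal is correct and follows essentially the same route as the paper, which obtains the bound by gluing copies of the 18-vertex, 11552-realization graph along a triangle (a 7-fan, so $\lamIII{H}=1$ and each copy contributes 15 vertices) and padding with 0-extensions, each contributing a factor of 2. The paper gives no separate formal proof beyond this construction, so your write-up, including the explicit check that the triangle is minimally 3-rigid and the reliance on \Cref{thm:fanformula}, matches its argument.
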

Note that in the data we have at the moment other fan constructions do not yield anything better.

Up to 10 vertices there are not many graphs with a realization number lower than $2^{|V|-3}$.
In \Cref{tab:lowlamIII} we show how many there are with minimum degree 4.
Every 0-extension of such a graph has also $\lamIII{G}<2^{|V|-3}$ but minimum degree 3.
\begin{table}[ht]
	\centering\small
	\begin{tabular}{rrr}
		\toprule
		$n$ & graphs & graphs (min deg 4)\\\midrule
		8   & 1      & 1 \\
		9   & 7      & 2 \\
		10  & 115    & 4 \\
		\bottomrule
	\end{tabular}
	\caption{Number of graphs with $\lamIII{G}< 2^{|V|-3}$ and in the last column those that have minimum degree 4.}
	\label{tab:lowlamIII}
\end{table}

\subsection{Higher dimensions}
Similarly we can do for higher dimensions. From \cite{PlaneSphere} we know that $\lam{d}{G}\leq\lamS{d}{G}$ for all graphs $G$.
Hence, $\maxlam{d}{n}\leq\maxlamS{d}{n}$ and $\minlam{d}{n}\leq\minlamS{d}{n} $ for all $n$.
In dimension $d$ the smallest reasonable fan construction is by gluing on a complete graph with $d$ vertices.
Again computations are done probabilistically using Gröbner bases on the equations given by random edge lengths.
We then get the following lower bounds for $\maxlam{d}{n}$ and $\maxlamS{d}{n}$ (\Cref{tab:higherdim}), which are clearly non-optimal.
Computations for $d\geq4$ were only done for a small subset of the minimally $d$-rigid graphs with at most $d+7$ vertices.
\begin{table}[ht]
	\centering\small
	\begin{tabular}{rrr}
		\toprule
		$d$ & bound for $\maxlam{d}{n}$ & bound for $\maxlamS{d}{n}$\\\midrule
		2   & 2.59972 & 2.73087 \\
		3   & 3.35787 & 3.30791 \\
		4   & 3.61722 & 3.66714 \\
		5   & 3.66714 & 3.66714 \\
		6   & 4.06019 & 4.07930 \\
		\bottomrule
	\end{tabular}
	\caption{Some base factors for lower bounds for $\maxlam{d}{n}$ and $\maxlamS{d}{n}$ achieved from computations with small graphs.}
	\label{tab:higherdim}
\end{table}

Let $G$ be a graph with a vertex adjacent to all other vertices.
Then we know from \cite{PlaneSphere} that $\lam{d}{G}=\lamS{d}{G}$ and furthermore $\lam{d+1}{G^*}=\lam{d}{G}$, where $G^*$ is obtained from $G$ by a coning operation, i.\,e.\ by adding a new vertex and edges from the new vertex to all the others. From this we get $\lam{d+k}{G^{*k}}=\lam{d}{G}$ for graphs $G^{*k}$ constructed by coning $k$ times.
Using this we can get general bounds for $\maxlam{d}{n}$, $\maxlamS{d}{n}$, $\minlam{d}{n}$ and $\minlamS{d}{n}$.
Note that the condition on a vertex being adjacent to all others in $G$ is crucial.
Hence, we do get the following bounds using gluing operations.
\begin{theorem}
	For $d\geq 5$ and $n\geq d+1$:
	\begin{align*}
		\maxlam{d}{n} \geq 2 \cdot 2^{(n-d-1)\modop(6)} \cdot 2432^{\!\lfloor(n-d-1)/6\rfloor}.
	\end{align*}
	Hence, $\maxlam{d}{n}$ grows at least as $\bigl(\!\sqrt[6]{2432}\bigr)^n$ which is approximately $3.66714^n$.
\end{theorem}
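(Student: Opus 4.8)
The plan is to transport a single well-chosen graph from dimension $5$ up to every dimension $d\geq 5$ by coning, keeping the realization count fixed, and then to feed the resulting graph into the generalized fan bound \labelcref{eq:bound_genfan} with the complete graph $K_{d+1}$ as the gluing subgraph.

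First I would fix a base graph $G_0$: a minimally $5$-rigid graph on $12$ vertices with $\lam{5}{G_0}=4864$ that has a vertex adjacent to all the others and contains a $K_6$ subgraph. This is precisely the graph realizing the $d=5$ entry of \Cref{tab:higherdim}, where the fan glues copies of $G_0$ along $K_6$ (note $\lam{5}{K_6}=2$, so the quotient is $4864/2=2432$ and the base factor is $\sqrt[6]{2432}\approx 3.66714$). Since $G_0$ carries a universal vertex, the coning relation of \cite{PlaneSphere} applies, and because every new cone apex is again adjacent to all earlier vertices, the iterated cone $G_0^{*k}$ exists for all $k\geq 0$ with
\begin{equation*}
	\lam{5+k}{G_0^{*k}}=\lam{5}{G_0}=4864.
\end{equation*}

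Next I would verify the two structural properties of $G:=G_0^{*(d-5)}$ that the fan bound requires. It is minimally $d$-rigid on $12+(d-5)=d+7$ vertices. Coning maps a clique $K_j$ to $K_{j+1}$, since each apex is joined to everything below it and successive apices are mutually adjacent; hence the original $K_6\subseteq G_0$ has grown into a $K_{d+1}$ subgraph $H$ of $G$. This $H$ is minimally $d$-rigid, has $d+1\geq d$ vertices and satisfies $\lam{d}{K_{d+1}}=2$ (a $d$-simplex together with its mirror image), so by \Cref{lem:subrc} the value $\lam{d}{H}=2$ divides $\lam{d}{G}=4864$ with quotient $2432$, and \Cref{thm:fanformula} governs the fan built from copies of $G$ glued along $H$.

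Finally I would instantiate \labelcref{eq:bound_genfan} with $|V|=d+7$, $|W|=d+1$, $|V|-|W|=6$, $\lam{d}{H}=2$ and $\lam{d}{G}/\lam{d}{H}=2432$, obtaining
\begin{equation*}
	\maxlam{d}{n}\geq 2\cdot 2^{(n-d-1)\modop 6}\cdot 2432^{\lfloor(n-d-1)/6\rfloor}
\end{equation*}
for all $n\geq d+1$, with asymptotic growth rate $\sqrt[6]{2432}\approx 3.66714$. The fan-and-coning bookkeeping here is routine once the earlier results are in hand; the genuine obstacle is certifying the single numerical input $\lam{5}{G_0}=4864$. In dimension $5$ this count is produced only probabilistically (a Gröbner-basis computation over a finite field with random edge lengths), so the theorem ultimately rests on that computation rather than on a closed-form evaluation. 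The remaining points to confirm---that coning preserves minimal $d$-rigidity, the universal-vertex property, and the growing clique---are immediate from the definition of the coning operation.
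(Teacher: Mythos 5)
Your proposal is correct and follows essentially the same route as the paper: the same 12-vertex minimally $5$-rigid base graph with $\lam{5}{G}=4864$ and a universal vertex, glued along its $K_6$ subgraph via \labelcref{eq:bound_genfan} for $d=5$, then lifted to higher dimensions by iterated coning, which preserves the realization count and turns the $K_6$ into a $K_{d+1}$ with $\lam{d}{K_{d+1}}=2$. Your additional remarks on why the iterated cone keeps a universal vertex and a growing clique, and on the probabilistic nature of the certificate $\lam{5}{G}=4864$, are accurate elaborations of points the paper leaves implicit.
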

\begin{proof}
	The graph $G$ with integer representation 35018505495120117759 with 12 vertices has $\lam{5}{G}=4864$.
	The complete graph on 6 vertices is a subgraph of $G$ and hence by fan construction we get the result for $d=5$.
	\begin{align*}
		\maxlam{d}{n} \geq 2 \cdot 2^{(n-6)\modop(12-6)} \cdot \left(\frac{4864}{2}\right)^{\!\lfloor(n-6)/(12-6)\rfloor}.
	\end{align*}
	Furthermore, $G$ has a vertex of degree 11.
	Let $G^{*k}$ be the graph obtain from $G$ by $k$ coning operations.
	Then $\lam{d+k}{G^{*k}}=\lam{d}{G}=4864$ and $G^{*k}$ contains a complete graph with $6+k$ vertices which has 2 realizations.
	Hence, the result for $d\geq 5$ holds.
\end{proof}
Note that these are quite far apart from the upper bounds computed in \cite{Bartzos23}.
As an immediate consequence we get the same bound for $\maxlamS{d}{n}$.

Using a fan-construction of the graph with encoding
\begin{equation*}
	673666050660509883906464650904436971228255917423
\end{equation*}
which has a coned vertex, has 19 vertices and 11552 realizations in dimension $4$, we get the following.
\begin{theorem}
	The minimal number of realizations $\minlam{d}{n}$, for $d\geq 4$ and $n\geq d$, satisfies
	\begin{equation*}
		\minlam{d}{n} \leq 2^{(n-d)\modop 15}\cdot(11552/1)^{\lfloor(n-d)/15\rfloor}.
	\end{equation*}
	Hence, $\minlam{d}{n}$ grows at most as $\bigl(\!\sqrt[15]{11552}\bigr)^n$ which is approximately $1.86571^n$.
	\label{thm:minlamd}
\end{theorem}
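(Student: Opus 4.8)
The plan is to exhibit, for every $d\geq 4$ and every $n\geq d$, one explicit minimally $d$-rigid graph on $n$ vertices whose realization count equals the claimed right-hand side. Since $\minlam{d}{n}$ is by definition a minimum over all minimally $d$-rigid graphs on $n$ vertices, producing a single such graph immediately yields the inequality. The whole construction rests on the fan formula (\Cref{thm:fanformula}) applied with gluing subgraph $H=K_d$, the complete graph on $d$ vertices. The key observation is that $\lam{d}{K_d}=1$: a generic realization of $K_d$ in $\C^d$ spans an affine hyperplane, and the reflection across that hyperplane is the restriction of a rotation that flips the normal direction while fixing every vertex, so the two mirror configurations are congruent under a direct isometry. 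This is exactly what produces the clean denominator ``$/1$'' in the statement, in contrast to the full-dimensional simplex $K_{d+1}$ used for the maximum bound, which has two realizations.

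First I would treat the base dimension $d=4$. The graph $G_4$ with the given encoding is minimally $4$-rigid, has $19$ vertices, satisfies $\lam{4}{G_4}=11552$, and contains a copy of $K_4$. Gluing $k$ copies of $G_4$ along this $K_4$ and invoking \Cref{thm:fanformula} gives a minimally $4$-rigid graph on $4+15k$ vertices with realization count
\[
\lam{4}{K_4}\cdot\Bigl(\tfrac{\lam{4}{G_4}}{\lam{4}{K_4}}\Bigr)^{k}=1\cdot 11552^{k},
\]
since each copy contributes $19-4=15$ new vertices. To reach an arbitrary $n$ I would set $k=\lfloor (n-4)/15\rfloor$ and then append $(n-4)\modop 15$ further $0$-extensions, each adding one vertex and multiplying the count by $2$; this produces a graph on exactly $n$ vertices with $2^{(n-4)\modop 15}\cdot 11552^{\lfloor(n-4)/15\rfloor}$ realizations, settling $d=4$.

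Next I would lift the construction to all $d\geq 4$ by coning. Because $G_4$ has a vertex adjacent to all others, its cone $G_4^{*}$ again has such a universal vertex (the new apex), so the property is preserved under repeated coning; setting $G_d:=G_4^{*(d-4)}$ and applying the coning identity $\lam{d+k}{G^{*k}}=\lam{d}{G}$ from \cite{PlaneSphere} gives $\lam{d}{G_d}=\lam{4}{G_4}=11552$. Coning a minimally $d$-rigid graph is minimally $(d+1)$-rigid and turns $K_4\subseteq G_4$ into $K_d\subseteq G_d$ (the cone of a complete graph is complete), so $G_d$ has $15+d$ vertices, is minimally $d$-rigid, and contains $K_d$ with $\lam{d}{K_d}=1$. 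Repeating the fan-plus-$0$-extension argument in dimension $d$ with $H=K_d$ (each copy again adding $(15+d)-d=15$ vertices) yields a minimally $d$-rigid graph on $n$ vertices with the asserted count, and the growth-rate statement follows from $\sqrt[15]{11552}\approx 1.86571$.

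The main obstacle I anticipate is not the combinatorial bookkeeping but pinning down two supporting facts cleanly: the reflection argument giving $\lam{d}{K_d}=1$ in every dimension, and the verification that the explicit $G_4$ really contains a $K_4$ and has the stated realization number (the latter being a probabilistic Gröbner-basis computation, hence a certificate rather than a proof in the strict sense). Everything else---preservation of minimal rigidity under gluing, coning, and $0$-extension, and the exact factor of $2$ contributed by each $0$-extension---is standard and already available from the results recalled in the excerpt.
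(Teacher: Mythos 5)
Your proposal is correct and follows essentially the same route as the paper: cone the explicit 19-vertex, 11552-realization graph up from dimension $4$ (using that it has a universal vertex so the coning identity from \cite{PlaneSphere} applies), glue copies along $K_d$ with $\lam{d}{K_d}=1$ via \Cref{thm:fanformula}, and fill the remainder with $0$-extensions. The only quibble is cosmetic: the hyperplane reflection fixing the image of $K_d$ is orientation-reversing rather than a rotation, but the argument that any two equivalent realizations are related by a direct isometry (composing with that reflection if necessary) is sound.
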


We do know that for certain dimensions a reasonably small graph needs to have a coned vertex.
From this we get the following precise result.
\begin{lemma}
	Let $d$ be the dimension. Then
	\begin{align*}
		\maxlam{d}{d+3}&=\maxlamS{d}{d+3}=16  & \text{for } d\geq 3,\\
		\maxlam{d}{d+4}&=\maxlamS{d}{d+4}=256 & \text{for } d\geq 8.
	\end{align*}
\end{lemma}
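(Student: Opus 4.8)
The plan is to reduce every case to a small ``base'' dimension by repeatedly stripping off a universal vertex, using the coning relation recorded in the paragraph just above the lemma. First I would fix the excess $c$ (so $c=3$ or $c=4$) and record the elementary edge count: a minimally $d$-rigid graph on $d+c$ vertices has $d(d+c)-\binom{d+1}{2}$ edges, which is exactly $\binom{c}{2}$ fewer than $K_{d+c}$. Hence such a graph is obtained from $K_{d+c}$ by deleting a set $D$ of $\binom{c}{2}$ edges, and a vertex is adjacent to all others precisely when it meets no edge of $D$. Since $D$ covers at most $2\binom{c}{2}=c(c-1)$ vertices, as soon as $d+c>c(c-1)$, i.e.\ $d>c(c-2)$, every such graph has a vertex $o$ adjacent to all others.

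Next I would run the induction on $d$. For $d>c(c-2)$ let $G\in\setM{d}{d+c}$ be arbitrary with universal vertex $o$; then $G$ is the cone over $G\setminus o\in\setM{d-1}{(d-1)+c}$, so $\lam{d}{G}=\lam{d-1}{G\setminus o}$ and, because $o$ is universal, $\lamS{d}{G}=\lam{d}{G}$. Taking maxima gives $\maxlam{d}{d+c}=\maxlamS{d}{d+c}\le\maxlam{d-1}{(d-1)+c}$, and the reverse inequality $\maxlam{d}{d+c}\ge\maxlam{d-1}{(d-1)+c}$ follows by coning a maximizer up one dimension. Thus both quantities are constant for all $d$ above the threshold $c(c-2)$ and equal their value at the base dimension $d_0:=c(c-2)$, which is $3$ for $c=3$ and $8$ for $c=4$ --- exactly the ranges $d\ge3$ and $d\ge8$ in the statement.

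It then remains to pin down the base values $\maxlam{3}{6}$ and $\maxlam{8}{12}$ and their spherical counterparts. Here the key observation is that at $d=d_0$ the covering count is tight: the $\binom{c}{2}$ deleted edges must cover all $c(c-1)=2\binom{c}{2}$ vertices, forcing $D$ to be a perfect matching. Hence the only minimally $d_0$-rigid graph on $d_0+c$ vertices without a universal vertex is the cocktail-party graph $K_{m\times2}$ on $2m=d_0+c$ vertices with $m=\binom{c}{2}$ parts, i.e.\ the octahedron $K_{2,2,2}$ for $c=3$ and $K_{6\times2}$ for $c=4$; every other graph at the base dimension is coned and contributes at most $\maxlam{d_0-1}{(d_0-1)+c}$. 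I would therefore (i) compute the realization number of this cocktail-party graph in space and on the sphere --- obtaining $16$ and $256$ respectively --- and (ii) use the complete small-case data to verify that $\maxlam{d_0-1}{(d_0-1)+c}$ does not exceed it ($\maxlam{2}{5}=8<16$ for $c=3$, and the analogous $\maxlam{7}{11}\le256$ for $c=4$). Combining with $\maxlam{d}{n}\le\maxlamS{d}{n}$ then gives equality of the space and sphere maxima at the base, and by the induction for all $d$ in the stated range.

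The main obstacle will be the base case for $c=4$. Unlike $c=3$, where $\maxlam{2}{5}=8$ is immediate, bounding the coned contribution needs $\maxlam{7}{11}\le256$, which is itself a $c=4$ problem one dimension lower and so unwinds into a short cascade down to dimensions where exhaustive data is available; at each level the only new non-coned graph to examine is the corresponding near-matching deletion. Moreover the spherical count of $K_{6\times2}$ has no universal-vertex shortcut, so it cannot be inferred from the space count and must instead be certified by a direct (probabilistic Gröbner) computation, exactly as for the other three-dimensional-and-higher data in this section.
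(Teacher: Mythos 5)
Your overall strategy --- strip universal vertices via coning down to a base dimension, then certify the base computationally --- is the same as the paper's, and your covering argument for the existence of the universal vertex (the $\binom{c}{2}$ deleted edges of $K_{d+c}$ touch at most $c(c-1)$ vertices) is a clean explicit version of what the paper merely asserts. The genuine problem is your de-coning identity. The relation imported from \cite{PlaneSphere} is $\lam{d}{G}=\lamS{d-1}{G\setminus o}$ for a universal vertex $o$: deleting the cone apex lands you on the \emph{spherical} count one dimension down, and this agrees with $\lam{d-1}{G\setminus o}$ only when $G\setminus o$ again has a universal vertex. That is harmless above your threshold, but it bites exactly at the bottom of the cascade, where $G\setminus o$ need not be coned. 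Consequently the quantities you must control in step (ii) are $\maxlamS{2}{5}$ and $\maxlamS{7}{11}$, not $\maxlam{2}{5}$ and $\maxlam{7}{11}$. For $c=3$ this costs nothing ($\maxlamS{2}{5}=8$ as well), but for $c=4$ your descent has to be run entirely in spherical counts, and at the intermediate levels $d=4,\dots,7$ the non-coned graphs are not a single near-matching deletion (already at $d=6$, $n=10$ there are several isomorphism types of six-edge complements covering all vertices), so the cascade is heavier than you suggest and runs through dimensions where no exhaustive data exists. As written, step (ii) for $c=4$ does not close.

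The paper sidesteps all of this by placing its base \emph{one dimension above} your threshold: it computes $\maxlam{4}{7}=16$ and $\maxlam{9}{13}=256$ exhaustively, where every relevant graph is a cone. Since coning any $H\in\setM{8}{12}$ gives $\lam{9}{H^*}=\lamS{8}{H}$, the single exhaustive computation at $(9,13)$ bounds $\maxlamS{8}{12}$ from above for free --- no spherical computation of $K_{6\times2}$, no descent through $\maxlamS{7}{11}$ --- and the certificates for $d=3$ and $d=8$ are obtained simply by deleting the cone apex of the maximizers. Your plan is repairable (replace Euclidean by spherical maxima throughout the cascade and enumerate the non-coned complements at each level), but the paper's choice of base turns the "main obstacle" you identify into a single finite computation.
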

\begin{proof}
	We only need to consider minimally $d$-rigid graphs with minimum degree $d+1$.
	It is easy to show that such a graph with $d+3$ vertices has a vertex of degree $d+2$ as long as $d>3$.
	Computations show that $\maxlam{4}{7}=16$.
	Hence, it is enough to know that the graph $G$ with integer representation 1048059 has $\lam{4}{G}=16$.
	Indeed already the graph $G'$ obtained by deleting the coned vertex of $G$ has $\lam{3}{G'}=16$.

	Similarly a minimally $d$-rigid graph with $d+4$ vertices does always have a vertex of degree $d+3$ when $d>8$.
	We can compute $\lam{9}{G}$ for all minimally $9$-rigid graphs with 13 vertices and get that $\maxlam{9}{13}=256$,
	obtained by the graph $G$ with integer representation 151115709437395776568827.
	Again the graph $G'$ obtained by deleting the coned vertex of $G$ has $\lam{8}{G'}=256$.

	The same graphs certify for the spherical count.
\end{proof}
In a similar way the computations yield results on the minimal number of realizations.
\begin{lemma}
	Let $d$ be the dimension. Then
	\begin{align*}
		\minlam{d}{d+3}&=\minlamS{d}{d+3}=8  & \text{for } d\geq 2,\\
		\minlam{d}{d+4}&=\minlamS{d}{d+4}=16 & \text{for } d\geq 2.
	\end{align*}
\end{lemma}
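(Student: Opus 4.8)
The plan is to establish each equality by matching a construction (upper bound) against a reduction argument (lower bound), and to obtain the two spherical statements from the Euclidean ones at the very end. For the upper bounds I would start from $K_{d+1}$, the unique minimally $d$-rigid graph on $d+1$ vertices, with $\lam{d}{K_{d+1}}=\lamS{d}{K_{d+1}}=2$, and perform successive $0$-extensions. Each $0$-extension attaches a new vertex to $d$ old ones; generically that vertex is cut out as the intersection of $d$ spheres, so it contributes exactly two positions and multiplies both $\lamfunc{d}$ and $\lamSfunc{d}$ by $2$. After two extensions one reaches a graph on $d+3$ vertices with count $8$, after three a graph on $d+4$ vertices with count $16$, giving $\minlam{d}{d+3}\le 8$, $\minlamS{d}{d+3}\le 8$ and the analogous bounds for $d+4$. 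For the spherical lower bounds I would invoke $\lam{d}{G}\le\lamS{d}{G}$ from \cite{PlaneSphere}: once the Euclidean lower bound $\lam{d}{G}\ge 8$ (resp.\ $\ge 16$) is known for every graph, $\lamS{d}{G}\ge 8$ (resp.\ $\ge 16$) is immediate, so only the Euclidean lower bounds remain.

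For those I would induct on $d$ at fixed excess $n-d\in\{3,4\}$, using two reductions. The first is $0$-reduction: a vertex $v$ of degree exactly $d$ in a minimally $d$-rigid $G$ can be deleted, $G-v$ is again minimally $d$-rigid (it is independent of the right cardinality, hence a basis of the rigidity matroid), and $\lam{d}{G}=2\lam{d}{G-v}$, so $\lam{d}{G}\ge 2\minlam{d}{n-1}$. As base facts I would record $\minlam{d}{d+1}=2$ and $\minlam{d}{d+2}=4$, the latter because a degree count forbids minimum degree $d+1$ on $d+2$ vertices, forcing every such graph to $0$-reduce onto $K_{d+1}$. The second reduction is coning: if $G$ has a universal vertex $o$, then by \cite{PlaneSphere} the graph $G-o$ is minimally $(d-1)$-rigid and $\lam{d}{G}=\lamS{d-1}{G-o}\ge\lam{d-1}{G-o}\ge\minlam{d-1}{n-1}$; since $n-1=(d-1)+(n-d)$, this passes the bound down one dimension at the same excess, which is exactly the inductive hypothesis.

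It then remains to check that every minimally $d$-rigid graph on $d+3$ or $d+4$ vertices is caught by one of the two reductions, i.e.\ has a degree-$d$ vertex or a universal vertex, up to finitely many exceptions. A degree-sum argument (as in the proof of the preceding lemma) shows that a graph of minimum degree $d+1$ on $d+3$ vertices must carry a vertex of degree $d+2$, hence a universal one, unless $d=3$, where the only candidate is the $4$-regular octahedron; and on $d+4$ vertices it must carry a universal vertex of degree $d+3$ once $d\ge 9$, whereas for $2\le d\le 8$ all degrees may stay in $\{d+1,d+2\}$ (for $d=8$ this forces the cross-polytope $K_{6\times 2}$). Feeding the already-established value $\minlam{d}{d+3}=8$ into the $0$-reduction then yields $\lam{d}{G}\ge 2\cdot 8=16$ in every degree-$d$ case on $d+4$ vertices.

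The main obstacle is precisely the minimum-degree-$(d+1)$ graphs that possess no universal vertex: they are irreducible for both reductions, so the induction says nothing and their counts must be computed outright. The saving grace is that they live in only finitely many dimensions, namely $d=3$ for excess $3$ and $2\le d\le 8$ for excess $4$, and each dimension supplies only a short explicit list (cocktail-party/octahedron-type graphs). For $d=2$ these are covered by \Cref{cor:lowerII}, for $d=3$ by the complete enumeration of minimally $3$-rigid graphs up to $10$ vertices, and for $4\le d\le 8$ by direct Gröbner-basis computation on the handful of relevant graphs; in each case the count is at least $2^{n-d}$. Assembling the construction, the two reductions with their induction on $d$, and these finitely many base computations proves $\lam{d}{G}\ge 8$ on $d+3$ vertices and $\lam{d}{G}\ge 16$ on $d+4$ vertices, and combined with $\lam{d}{G}\le\lamS{d}{G}$ and the $0$-extension constructions this gives the four stated equalities.
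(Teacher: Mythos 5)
Your proof is correct and rests on the same pillars as the paper's: explicit certificates for the upper bounds, and coning plus degree counting plus finite computation for the lower bounds. The differences are worth noting. For the upper bound the paper exhibits specific coned certificate graphs (507903 on $7$ vertices with $\lam{4}{G_1}=8=\minlam{4}{7}$, and 75520970211344265510911 on $13$ vertices with $\lam{9}{G_2}=16=\minlam{9}{13}$) and propagates them to higher dimensions by repeated coning via $\lam{d+k}{G^{*k}}=\lam{d}{G}$; you instead build towers of $0$-extensions over $K_{d+1}$, which works uniformly for every $d\geq 2$ without naming any particular graph and immediately handles the sphere as well. For the lower bound the paper is extremely terse: it only asserts that the certificates attain the minimum in dimensions $4$ and $9$ and that the complete computational data for $d\leq 3$ (resp.\ $d<9$) covers the rest, leaving the dimension-reduction logic implicit in the surrounding discussion. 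You reconstruct that logic in full — induction on $d$ at fixed excess $n-d$, using $0$-reduction at degree-$d$ vertices and the coning identity $\lam{d}{G}=\lamS{d-1}{G\setminus o}\geq\lam{d-1}{G\setminus o}$ at universal vertices — and your degree-sum analysis correctly isolates the only irreducible cases (minimum degree $d+1$, no universal vertex): $d=3$ at excess $3$ and $2\leq d\leq 8$ at excess $4$. This buys a genuine economy: the paper's argument consumes complete realization data for all minimally $d$-rigid graphs on $d+4$ vertices for every $d<9$, whereas your reductions confine the unavoidable computations to the short list of irreducible graphs in those finitely many dimensions. The two routes are logically equivalent, but yours is the more self-contained write-up of what the paper compresses into three sentences.
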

\begin{proof}
	Here the graph $G_1$ with integer representation 507903 has a coned vertex.
	It achieves $\lam{4}{G_1}=\lamS{4}{G_1}=8=\minlam{4}{7}$ and works as a certificate.
	All computations for $d\leq 3$ and $6$ vertices show the remaining part.
	The graph $G_2$ with integer representation 75520970211344265510911 has a coned vertex and $\lam{9}{G_2}=\lamS{9}{G_2}=16=\minlam{9}{13}$.
	The full data for graphs with $d+4$ vertices for $d<9$ yields the remaining results.
\end{proof}

\section{Changing factors of construction steps}\label{sec:factors}
In this section we summarize experimental results on the increase of the number of realizations after certain construction steps.
In particular we look at all possibilities of constructions with a low number of vertices.
For each construction step and each number of vertices we picked one example with the lowest and one example with the largest factor for the increase of the number of realizations.
The chosen construction steps for the plane and the sphere are naturally the same, while the factors differ.
Vertex and spider splits are defined for any dimension.
Extension constructions can be generalized to higher dimensions, though they loose their classification properties.
Nevertheless, we can check the factors for low number of vertices.

Bounds on these factors of increase have been described an open problem in \cite{LowerBounds,Jackson2018}. We are not solving this problem in the current paper but push the limits and go into more details.

All the results in this section are complete in a sense, that all possible cases for the given number of vertices have been checked. This, however does not indicate what might happen for graphs with more vertices.
Sometimes several graphs and operations yield the same minimum or maximum. In this case we just record a representative.
In this section we collect the resulting statements on the factors that the construct steps may yield in the number of realizations.
The certificate graphs proving the statements can be found in \Cref{sec:factor-cert}.

\subsection{Plane}
It is known that all minimally rigid graphs in the plane can be constructed by a sequence of extension constructions (also called Henneberg steps) \cite{Henneberg}.
We have seen that different types of these steps yield a different increase of $\lamIIfunc$.
While it is known and easy to see that 0-extensions always increase the number of realizations by a factor of two \cite{BorceaStreinu2004},
little is known for the 1-extensions and hence even a general lower bound that is more than constant is an open problem (compare \cite{LowerBounds}).
In \cite{Jackson2018} is is shown that vertex splitting increases $\lamIIfunc$ by a factor of at least two and as a consequence some 1-extensions also have this property.
Some experimental results on the factors have been presented in \cite{LowerBounds}.
Here we present some more detailed experiments for several construction steps and low number of vertices.
All the results in the following tables are exhaustive meaning we checked all possible constructions.
\subsubsection{Extension Constructions}
In order to see more properties we distinguish 1-extension into subclasses E1a, E1b and E1c depending on how many edges exist within the chosen vertices (see \Cref{fig:Henneberg}).
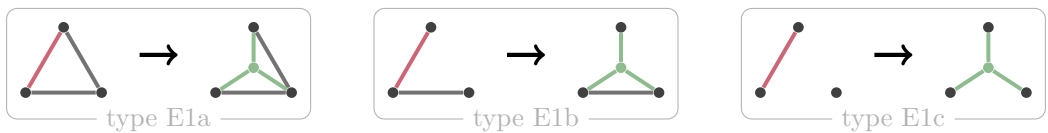
\begin{figure}[ht]
	\centering
	\begin{tikzpicture}[scale=1]
		\node[eframe] (ef) at (1.75,-0.35) {type E1a};
		\draw[eframe] (ef) -- ++(2,0) -- ++ (0,1.466) -- ++(-4,0) -- ++(0,-1.466) -- (ef);
		\node[vertex] (a) at (0,0) {};
		\node[vertex] (b) at (1,0) {};
		\node[vertex] (c) at (0.5,0.866025) {};
		\node[vertex] (d) at (2.5,0) {};
		\node[vertex] (e) at (3.5,0) {};
		\node[vertex] (f) at (3,0.866025) {};
		\node[nvertex] (g) at (3,0.33) {};
		\draw[edge] (a)edge(b) (b)edge(c) (d)edge(e) (e)edge(f);
		\draw[oedge] (a)edge(c);
		\draw[nedge] (d)edge(g) (e)edge(g) (f)edge(g);
		\draw[construle] (1.5,0.5) -- (2,0.5);
	\end{tikzpicture}
	\qquad
	\begin{tikzpicture}[scale=1]
		\node[eframe] (ef) at (1.75,-0.35) {type E1b};
		\draw[eframe] (ef) -- ++(2,0) -- ++ (0,1.466) -- ++(-4,0) -- ++(0,-1.466) -- (ef);
		\node[vertex] (a) at (0,0) {};
		\node[vertex] (b) at (1,0) {};
		\node[vertex] (c) at (0.5,0.866025) {};
		\node[vertex] (d) at (2.5,0) {};
		\node[vertex] (e) at (3.5,0) {};
		\node[vertex] (f) at (3,0.866025) {};
		\node[nvertex] (g) at (3,0.33) {};
		\draw[edge] (a)edge(b) (d)edge(e);
		\draw[oedge] (a)edge(c);
		\draw[nedge] (d)edge(g) (e)edge(g) (f)edge(g);
		\draw[construle] (1.5,0.5) -- (2,0.5);
	\end{tikzpicture}
	\qquad
	\begin{tikzpicture}[scale=1]
		\node[eframe] (ef) at (1.75,-0.35) {type E1c};
		\draw[eframe] (ef) -- ++(2,0) -- ++ (0,1.466) -- ++(-4,0) -- ++(0,-1.466) -- (ef);
		\node[vertex] (a) at (0,0) {};
		\node[vertex] (b) at (1,0) {};
		\node[vertex] (c) at (0.5,0.866025) {};
		\node[vertex] (d) at (2.5,0) {};
		\node[vertex] (e) at (3.5,0) {};
		\node[vertex] (f) at (3,0.866025) {};
		\node[nvertex] (g) at (3,0.33) {};
		\draw[oedge] (a)edge(c);
		\draw[nedge] (d)edge(g) (e)edge(g) (f)edge(g);
		\draw[construle] (1.5,0.5) -- (2,0.5);
	\end{tikzpicture}
	\caption{1-extensions of different types in dimension~2. Green vertices are new vertices.}
	\label{fig:Henneberg}
\end{figure}

When $G'$ is obtained from $G$ by a 1-extension of type E1a, we know that $\lamII{G'}/\lamII{G}\geq 2$ from \cite[Lem~4.6]{Jackson2018}.
Experiments indicate that extensions of type E1a maybe only yield a fixed factor of two for the increase of $\lamIIfunc$.
We might therefore conjecture
\begin{conjecture}
	Let $G$ be a minimally rigid graph and $G'$ be obtained from $G$ by a 1-extension of type E1a.
	Then $\frac{\lamII{G'}}{\lamII{G}}=2$.
\end{conjecture}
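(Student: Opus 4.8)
In a type-E1a $1$-extension we delete an edge $uv$ of $G$ and add a new vertex $w$ joined to $u$, $v$ and a third vertex $t$, where—this is exactly what characterises E1a—both $ut$ and $vt$ are already edges of $G$; thus $u,v,t$ span a triangle in $G$. The plan is to realise this operation as a $0$-extension \emph{followed by an edge swap}, and to show that the swap preserves the count. Let $K$ be the graph on $V(G)\cup\{w\}$ whose edges are those of $G$ except $uv$, together with $wu$ and $wv$. Then
$$ G^{++} := K + uv, \qquad G' = K + wt, $$
where $G^{++}$ is precisely the $0$-extension of $G$ attaching $w$ to $u,v$, so $\lamII{G^{++}}=2\,\lamII{G}$. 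It therefore remains to prove $\lamII{G'}=\lamII{G^{++}}$.

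\textbf{Reduction to a degree comparison.} Since $G$ is minimally rigid, $K$ is independent with $2|V(G)|-2$ edges on $|V(G)|+1$ vertices, hence a one-degree-of-freedom mechanism. Let $\mathcal{C}$ be its configuration curve for a generic assignment of lengths to the edges of $K$, taken in the normalised space $X_{K,2}$ from the proof of \Cref{lem:subrc}. Adding the single edge $uv$ (resp.\ $wt$) with a generic length imposes one squared-distance equation, so the realization counts are the generic fibre sizes of two functions on $\mathcal{C}$:
$$ \lamII{G^{++}} = \deg\bigl(\,|u-v|^2 : \mathcal{C}\to\C\,\bigr), \qquad \lamII{G'} = \deg\bigl(\,|w-t|^2 : \mathcal{C}\to\C\,\bigr). $$

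\textbf{The key geometric observation.} In $K$ the four vertices $u,t,v,w$ carry exactly the edges $ut, tv, vw, wu$, i.e.\ they form the $4$-cycle $K_{2,2}$ with parts $\{u,v\}$ and $\{t,w\}$, and the two functions above are the squared lengths of its two \emph{diagonals}. I would factor both functions through the configuration curve $\mathcal{Q}$ of this $4$-bar linkage, via the restriction map $\phi:\mathcal{C}\to\mathcal{Q}$. On $\mathcal{Q}$ the two diagonals are symmetric: fixing $uv$ splits the quadrilateral into the triangles $utv$ and $uwv$ sharing $uv$, while fixing $wt$ splits it into $utw$ and $vtw$ sharing $wt$; in each case two triangles, each rigid up to reflection, so $|u-v|^2$ and $|w-t|^2$ have the \emph{same} generic fibre size on $\mathcal{Q}$. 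As each function on $\mathcal{C}$ is its counterpart on $\mathcal{Q}$ pulled back along $\phi$, multiplicativity of degrees gives a common factor $\deg\phi$ that cancels, whence the two degrees on $\mathcal{C}$ coincide and $\lamII{G'}=\lamII{G^{++}}=2\,\lamII{G}$.

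\textbf{Main obstacle.} The geometry is transparent; the real work is the degree bookkeeping over $\C$. I expect the delicate points to be: justifying that the realization counts equal the claimed projection degrees in the normalised model of \Cref{def:laman-number-2d} (in particular with reflections counted as distinct); that $\mathcal{C}$, $\mathcal{Q}$ and the map $\phi$ are well behaved for generic lengths, with $\phi$ dominant; and handling possible reducibility so that the cancellation of $\deg\phi$ is legitimate on the component(s) of $\mathcal{C}$ actually used. Everything else reduces to the elementary planar geometry of a quadrilateral and its diagonals.
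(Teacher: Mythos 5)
First, note that the paper does not prove this statement: it is stated as a \emph{conjecture}, supported only by exhaustive computation for graphs with at most $12$ vertices, and the related lower bound $\lamII{G'}/\lamII{G}\geq 2$ is quoted from \cite{Jackson2018}. So your proposal is an attempt at a genuinely new result, and it has to be judged on whether it closes the known gap. It does not.

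Your setup is correct and the reduction is the natural one: writing $G'=K+wt$ and $G^{++}=K+uv$ with $K$ the common independent subgraph, the four vertices $u,t,v,w$ carry the $4$-cycle $ut,tv,vw,wu$ in $K$, and both realization counts are generic fibre sizes of the two diagonal squared-length functions on the configuration curve $\mathcal{C}$ of $K$. The gap is in the step where you cancel $\deg\phi$. The identity $\deg(f\circ\phi)=\deg(f)\cdot\deg(\phi)$ is only valid component by component, with $\deg(f)$ computed on the \emph{image} $\phi(\mathcal{C}_i)$ of each irreducible component $\mathcal{C}_i$ of $\mathcal{C}$, not on all of $\mathcal{Q}$. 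Your symmetry argument (two triangles glued along either diagonal, giving fibre size $4$ for each) computes the degrees of the two functions on the full configuration curve of the quadrilateral; it says nothing about their degrees on a proper subcurve or on a single component of $\mathcal{Q}$ that happens to be $\phi(\mathcal{C}_i)$. Over $\C$ with $\|x\|^2=\sum x_i^2$ the locus $\|x\|^2=0$ is a union of isotropic lines, and $\mathcal{C}$ typically has components on which two of the four vertices stay on a common isotropic line throughout the motion, so that one diagonal function is identically $0$ (hence contributes nothing to the count for a generic added length) while the other is non-constant (hence contributes positively). On such a component the two diagonal functions have \emph{different} degrees, and no common factor cancels. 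These degenerate components are precisely the objects that the combinatorial algorithm of \cite{PlaneCount} must track, and the fact that no clean exchange formula is known for them is exactly why this statement is open rather than an exercise.

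You do flag ``handling possible reducibility so that the cancellation of $\deg\phi$ is legitimate'' as a delicate point, but it is not a bookkeeping technicality to be deferred: it is the entire content of the conjecture. (By \cite[Lem~4.6]{Jackson2018} the net effect of the degenerate components can only increase $\lamII{G'}$ relative to $2\lamII{G}$; the conjecture asserts they never do, and your argument would need to prove exactly that.) The non-degenerate part of your argument is sound and recovers the known inequality in spirit, but as written it does not establish equality.
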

The conjecture is true for graphs $G$ with at most 12 vertices.

When $G'$ is obtained from $G$ by a 1-extension of type E1b, we know that $\lamII{G'}/\lamII{G}\geq 2$ from \cite[Lem~4.6]{Jackson2018} but it might be larger as \Cref{tab:Laman_Increase_H2b} shows. For the upper bound we get.
\begin{proposition}
	Assume that there is a universal constant $k_2$ bounding $\lamII{G'}/\lamII{G}$ from above for all minimally rigid graphs $G$ and $G'$ where $G'$ is obtained from $G$ by a 1-extension of type E1b.
	Then $k_2 \geq 9.75$.
\end{proposition}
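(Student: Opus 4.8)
The statement is a lower bound on any hypothetical universal constant, so it has a purely existential character: to establish $k_2 \geq 9.75$ it suffices to exhibit a single pair of minimally rigid graphs $G$ and $G'$, with $G'$ obtained from $G$ by a $1$-extension of type E1b, for which $\lamII{G'}/\lamII{G} \geq 9.75$. Once such a certificate pair is produced, any constant dominating the ratio for \emph{all} type-E1b extensions must in particular dominate this one, forcing $k_2 \geq 9.75$. The plan is therefore entirely constructive: produce one explicit witness and verify its realization counts.

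First I would generate candidate pairs from the exhaustive lists of minimally rigid graphs with few vertices. A type-E1b $1$-extension removes an edge $\{u,v\}$ and adds a new vertex adjacent to $u$, $v$ and a third vertex $x$ that is adjacent in $G$ to exactly one of $u$ and $v$ (so that the three neighbours of the new vertex span exactly one edge in $G'$, the defining incidence pattern of type E1b; see \Cref{fig:Henneberg}). Concretely, for each such $G=(V,E)$, each edge $\{u,v\}\in E$, and each admissible third vertex $x$, I would form the corresponding $G'$, enumerating precisely the type-E1b extensions of $G$.

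Next I would compute $\lamII{G}$ and $\lamII{G'}$ for every such pair using the combinatorial algorithm of \cite{PlaneCount} and its implementation \cite{ZenodoAlg}, recording the largest ratio encountered for each vertex count; this is the data summarized in \Cref{tab:Laman_Increase_H2b}. The maximizing witness is then reported by its integer encoding in \Cref{sec:factor-cert}, and verifying the proposition reduces to checking that the listed $G$ and $G'$ are indeed related by a type-E1b extension and that $\lamII{G'}/\lamII{G}=9.75$. Note that, in contrast to the situation of \Cref{lem:subrc}, a $1$-extension does not realize $G$ as a subgraph of $G'$, so no divisibility constraint applies and a non-integer ratio such as $9.75=39/4$ is admissible.

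The main obstacle is computational rather than logical. Since the realization-counting algorithm is exponential, the witnesses must have small vertex count while the search family must still be rich enough to attain $9.75$; there is no a priori guarantee that the optimum over small graphs is global, which is exactly why the proposition asserts only a lower bound on $k_2$ rather than an exact value. A secondary point requiring care is the reflection convention: the counts used here are twice those of \cite{Jackson2018}, but since this factor cancels in the ratio the value $9.75$ is convention-independent, provided the doubling is applied consistently to both $G$ and $G'$ when reading off the solver output.
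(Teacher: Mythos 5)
Your proposal is correct and takes essentially the same route as the paper: the proposition is established purely by certificate, namely the pair recorded in \Cref{tab:Laman_Increase_H2b} with twelve vertices, $\lamII{G}=1024$ and $\lamII{G'}=9984$, whose ratio $9984/1024=9.75$ forces any universal upper bound $k_2$ to satisfy $k_2\geq 9.75$. Your remarks on the exhaustive search, the non-applicability of the divisibility constraint from \Cref{lem:subrc}, and the cancellation of the reflection convention in the ratio all agree with the paper's treatment.
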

\Cref{tab:Laman_Increase_H2b} collects a representative graph $G$ for each number of vertices and a graph $G'$ that is obtained from $G$ by an extension of type~E1b with the respective number of realizations.

Finally, from \Cref{tab:Laman_Increase_H2c} we can see that type E1c steps can increase $\lamIIfunc$ be a factor of less than two.
The table collects a representative graph $G$ for each number of vertices and a graph $G'$ that is obtained from $G$ by a step of type~E1c with the respective number of realizations.
The lowest factor we found is $\sim1.71$.
Since, we have a complete data set for up to 13 vertices, where this lowest factor is the same for all $7\leq n \leq 12$ we might conjecture that it is indeed the lowest possible.
What we do know for sure instead is the following.
\begin{proposition}
	Assume that there are universal constants $k_1$ and $k_2$ bounding $\lamII{G'}/\lamII{G}$ from below and above respectively for all minimally rigid graphs $G$ and $G'$ where $G'$ is obtained from $G$ by a 1-extension of type E1c.
	Then $k_1\leq 12/7$ and $k_2 \geq 11.59$.
\end{proposition}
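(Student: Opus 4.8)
The plan is to prove both inequalities by exhibiting explicit certificate pairs $(G,G')$, since each bound is only an existence claim about achievable factors. To establish $k_1\leq 12/7$ it suffices to produce a single E1c extension realizing the ratio $\lamII{G'}/\lamII{G}=12/7$: any constant $k_1$ bounding the factor from below over \emph{all} E1c steps must then satisfy $k_1\leq 12/7$. Symmetrically, to establish $k_2\geq 11.59$ it suffices to produce one E1c extension whose factor is at least $11.59$. No structural statement about all E1c steps is required; the two universal constants are pinned down from one side each by a single witness, and indeed the assumption that $k_1$ and $k_2$ exist is never used beyond making the conclusion meaningful.

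First I would exhibit, for the lower-bound side, the smallest graph for which an E1c step attains the factor $12/7$, namely a minimally rigid graph $G$ on $7$ vertices together with its $8$-vertex extension $G'$, and verify with the counting algorithm of \cite{PlaneCount} that $\lamII{G}$ and $\lamII{G'}$ stand in the ratio $12/7$ (for instance $56$ and $96$). Since the complete data set up to $13$ vertices exhibits this same value as the smallest E1c factor for every $7\leq n\leq 12$, the witness is extremal among the computed cases, though for the proposition one instance already suffices. For the upper-bound side I would select from the exhaustive search a pair whose E1c factor is largest and check that $\lamII{G'}/\lamII{G}\geq 11.59$. The encodings of all these certificate graphs are collected in \Cref{tab:Laman_Increase_H2c,sec:factor-cert}.

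In both cases I must confirm that each extension is genuinely of type E1c, i.e.\ that the third chosen vertex is adjacent to neither endpoint of the deleted edge, so that the construction matches \Cref{fig:Henneberg} and is not secretly an E1a or E1b step. The main obstacle is therefore purely computational rather than conceptual: correctly evaluating $\lamIIfunc$ on the certificates and certifying the extension type. Because the realization count equals the degree of the rigidity map and is obtained combinatorially by the algorithm of \cite{PlaneCount}, the proof reduces to reporting these verified counts; no attempt is made to bound the factor over all graphs, and whether $12/7$ is the true infimum of E1c factors remains only a conjecture.
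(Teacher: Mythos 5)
Your proposal is correct and follows essentially the same route as the paper: both bounds are certified by single witness pairs from the exhaustive computation recorded in \Cref{tab:Laman_Increase_H2c}, namely the $7$-vertex pair with counts $56$ and $96$ giving the ratio $12/7$, and the $12$-vertex pair with counts $1024$ and $11872$ giving $11.59375\geq 11.59$. Your added remarks on verifying the extension type and on $12/7$ being only conjecturally extremal match the paper's surrounding discussion.
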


\subsubsection{Splittings}
There are two well known types of splittings where vertices are split in order to get a larger minimally rigid graph.
We refer to \cite{NixonRoss} for and overview and \Cref{fig:splitting} for an illustration.

\begin{figure}[ht]
	\centering
	\begin{tikzpicture}[]
		\node[eframe] (ev) at (0,-1) {vertex splitting};
		\draw[eframe] (ev) -- ++(3.5,0) -- ++ (0,2) -- ++(-7,0) -- ++(0,-2) -- (ev);
		\begin{scope}[xshift=-2cm,yshift=-0.5cm]
			\begin{scope}[scale=1.25]
				\node[vertex] (a1) at (-0.5,0) {};
				\node[vertex] (a2) at (-0.9,0) {};
				\node[vertex] (b1) at (0.5,0) {};
				\node[vertex] (b2) at (0.9,0) {};
				\node[hvertex] (c) at (0,1) {};
				\node[vertex] (d) at (0,0) {};
				\draw[genericgraph] (-0.7,0) circle[x radius=0.35cm, y radius=0.15cm];
				\draw[genericgraph] (0.7,0) circle[x radius=0.35cm, y radius=0.15cm];
				\draw[moreedges] (a1)edge(a2);
				\draw[moreedges] (b1)edge(b2);
				\draw[edge] (a1)edge(c) (a2)edge(c);
				\draw[hedge] (d)edge(c);
				\draw[oedge] (b1)edge(c) (b2)edge(c);
			\end{scope}
		\end{scope}
		\draw[construle] (-0.25,0) -- (0.25,0);
		\begin{scope}[xshift=2cm,yshift=-0.5cm]
			\begin{scope}[scale=1.25]
				\node[vertex] (a1) at (-0.5,0) {};
				\node[vertex] (a2) at (-0.9,0) {};
				\node[vertex] (b1) at (0.5,0) {};
				\node[vertex] (b2) at (0.9,0) {};
				\node[hvertex] (c) at (-0.2,1) {};
				\node[nvertex] (c2) at (0.2,1) {};
				\node[vertex] (d) at (0,0) {};
				\draw[genericgraph] (-0.7,0) circle[x radius=0.35cm, y radius=0.15cm];
				\draw[genericgraph] (0.7,0) circle[x radius=0.35cm, y radius=0.15cm];
				\draw[moreedges] (a1)edge(a2);
				\draw[moreedges] (b1)edge(b2);
				\draw[hedge] (d)edge(c);
				\draw[edge] (a1)edge(c) (a2)edge(c);
				\draw[nedge] (b1)edge(c2) (b2)edge(c2) (d)edge(c2) (c)edge(c2);
			\end{scope}
		\end{scope}
	\end{tikzpicture}
	\qquad
	\begin{tikzpicture}[scale=1]
		\node[eframe] (es) at (0,-1) {spider splitting};
		\draw[eframe] (es) -- ++(3.9,0) -- ++ (0,2) -- ++(-7.1,0) -- ++(0,-2) -- (es);
		\begin{scope}[xshift=-2cm]
			\begin{scope}[scale=1.5]
				\node[hvertex] (v) at (0,0) {};
				\node[vertex] (a1) at (-0.5,0.2) {};
				\node[vertex] (a2) at (-0.5,-0.2) {};
				\node[vertex] (b1) at (0.5,0.2) {};
				\node[vertex] (b2) at (0.5,-0.2) {};
				\node[vertex] (w1) at (0,-0.5) {};
				\node[vertex] (w2) at (0,0.5) {};
				\draw[genericgraph] (-0.5,0) circle[x radius=0.15cm, y radius=0.35cm];
				\draw[genericgraph] (0.5,0) circle[x radius=0.15cm, y radius=0.35cm];
				\draw[moreedges] (a1)edge(a2);
				\draw[moreedges] (b1)edge(b2);
				\draw[edge] (a1)edge(v) (a2)edge(v);
				\draw[hedge] (w1)edge(v) (w2)edge(v);
				\draw[oedge] (b1)edge(v) (b2)edge(v);
			\end{scope}
		\end{scope}
		\draw[construle] (-0.25,0) -- (0.25,0);
		\begin{scope}[xshift=2.4cm]
			\begin{scope}[scale=1.5]
				\node[hvertex] (v) at (-0.2,0) {};
				\node[nvertex] (vp) at (0.2,0) {};
				\node[vertex] (a1) at (-0.7,0.2) {};
				\node[vertex] (a2) at (-0.7,-0.2) {};
				\node[vertex] (b1) at (0.7,0.2) {};
				\node[vertex] (b2) at (0.7,-0.2) {};
				\node[vertex] (w1) at (0,-0.5) {};
				\node[vertex] (w2) at (0,0.5) {};
				\draw[genericgraph] (-0.7,0) circle[x radius=0.15cm, y radius=0.35cm];
				\draw[genericgraph] (0.7,0) circle[x radius=0.15cm, y radius=0.35cm];
				\draw[moreedges] (a1)edge(a2);
				\draw[moreedges] (b1)edge(b2);
				\draw[edge] (a1)edge(v) (a2)edge(v);
				\draw[hedge] (w1)edge(v) (w2)edge(v);
				\draw[nedge] (b1)edge(vp) (b2)edge(vp) (w1)edge(vp) (w2)edge(vp);
			\end{scope}
		\end{scope}
	\end{tikzpicture}
	\caption{Splitting vertices to get larger minimally rigid graphs. The blue vertex is split, green vertices are new. Dots indicate that there might be more vertices and edges.}
	\label{fig:splitting}
\end{figure}
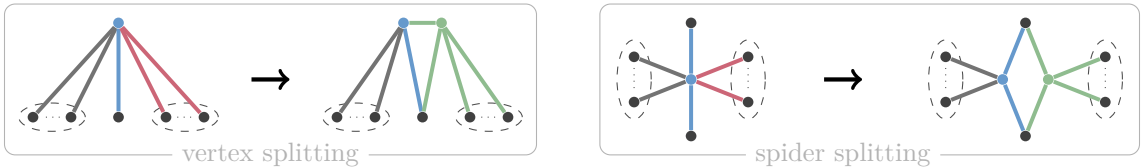

It is known from \cite[Lem~4.6]{Jackson2018} that the increase of the number of realizations by vertex splitting is at least two.
\Cref{tab:Laman_Increase_Vsplit} collects a representative graph $G$ for each number of vertices and a graph $G'$ that is obtained from $G$ by vertex splitting with the respective number of realizations.
We can see that the factor can be significantly higher than the minimum.
\begin{proposition}
	Assume that there is a universal constant $k_2$ bounding $\lamII{G'}/\lamII{G}$ from above for all minimally rigid graphs $G$ and $G'$ where $G'$ is obtained from $G$ by a vertex splitting.
	Then $k_2 \geq 11.34$.
\end{proposition}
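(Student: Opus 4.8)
The statement is an existence claim in disguise: since $k_2$ is assumed to bound $\lamII{G'}/\lamII{G}$ from above over \emph{all} vertex-split pairs, it suffices to exhibit a single pair $(G,G')$, with $G'$ obtained from $G$ by a vertex splitting, for which $\lamII{G'}/\lamII{G}\geq 11.34$. Such a pair then immediately forces $k_2\geq 11.34$, by the very definition of an upper bound. So the plan is not to derive the bound by any analytic argument, but to produce an explicit certificate and verify its two realization counts.

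First I would locate the extremal pair among the computational data. Concretely, I would iterate over the precomputed lists of minimally rigid graphs (which are complete up to $13$ vertices), apply every admissible vertex splitting to each graph to generate candidate graphs $G'$, and for each resulting pair compute the ratio $\lamII{G'}/\lamII{G}$ using the combinatorial algorithm of \cite{PlaneCount} together with its implementation \cite{ZenodoAlg,CapcoCpp}. The pair maximizing this ratio is the candidate certificate; its integer encodings would be recorded in \Cref{sec:factor-cert} and the underlying realization numbers tabulated in \Cref{tab:Laman_Increase_Vsplit}.

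Second, with the certificate fixed, the proof collapses to a finite verification: compute $\lamII{G}$ and $\lamII{G'}$ for the chosen pair and check that their quotient is at least $11.34$. Since vertex splitting always increases the count by a factor of at least two \cite[Lem~4.6]{Jackson2018}, the ratio is automatically well above one, and the only thing that remains is to confirm that it actually attains the claimed magnitude; this is exactly the role of the specific certificate, which witnesses a factor far exceeding the guaranteed minimum.

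The obstacle here is computational rather than conceptual. Because the realization-counting algorithm is exponential, the exhaustive sweep over all vertex splittings must be confined to the range of vertex counts for which complete lists of minimally rigid graphs are available; this is why the search is restricted to $n\leq 13$, and why $11.34$ is presented merely as a lower bound on $k_2$ (a still larger ratio could appear for graphs with more vertices). To trust the counts themselves, I would adopt the same safeguards used elsewhere in the paper, repeating each computation and cross-checking the output, so that the tabulated values of $\lamIIfunc$ supporting the certificate are reliable.
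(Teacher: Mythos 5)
Your proposal is correct and matches the paper's own argument: the proposition is proved by exhibiting a certificate pair, namely the $12$-vertex graph $G$ with encoding $1189938522902667411$ and $\lamII{G}=1024$ whose vertex split $G'$ (encoding $160027824476021084790788$) has $\lamII{G'}=11616$, giving the ratio $11616/1024\approx 11.34$ recorded in \Cref{tab:Laman_Increase_Vsplit}. The logical reduction (one witness forces the lower bound on $k_2$) and the computational search over complete lists of small minimally rigid graphs are exactly what the paper does.
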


\begin{proposition}
	Assume that there are universal constants $k_1$ and $k_2$ bounding $\lamII{G'}/\lamII{G}$ from below and above respectively for all minimally rigid graphs $G$ and $G'$ where $G'$ is obtained from $G$ by a spider splitting.
	Then $k_1\leq 1.97$ and $k_2 \geq 10.72$.
\end{proposition}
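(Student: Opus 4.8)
The plan is to establish the two one-sided inequalities separately, each by exhibiting a single explicit certificate pair $(G,G')$ in which $G'$ arises from $G$ by a spider splitting, exactly as for the analogous propositions on the other construction steps. The underlying logic is merely that of bounding a universal constant by a witnessed value: since $k_1$ is assumed to bound $\lamII{G'}/\lamII{G}$ from below for \emph{every} admissible pair, any concrete pair satisfies $k_1 \leq \lamII{G'}/\lamII{G}$, and dually $k_2 \geq \lamII{G'}/\lamII{G}$ for every concrete pair. Hence it suffices to produce one spider-splitting pair whose ratio does not exceed $1.97$ and one whose ratio is at least $10.72$.

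First I would locate, within the computed data set of minimally rigid graphs of small vertex count, a pair $(G,G')$ related by a spider splitting for which $\lamII{G'}/\lamII{G}$ is as small as possible; the certificate recorded in \Cref{sec:factor-cert} realizes a ratio at most $1.97$, so that $k_1 \leq \lamII{G'}/\lamII{G} \leq 1.97$. Symmetrically, I would exhibit a second pair attaining a large ratio, with $\lamII{G'}/\lamII{G} \geq 10.72$, which forces $k_2 \geq 10.72$. In both cases the realization numbers $\lamII{G}$ and $\lamII{G'}$ are computed with the combinatorial algorithm of \cite{PlaneCount} and its implementation \cite{ZenodoAlg}, and one checks directly from the two integer encodings that the edge sets differ precisely by the spider-splitting operation depicted in \Cref{fig:splitting}, namely the split vertex together with the newly added vertex and its redistributed incident edges.

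The only genuine work is the search itself: the values $1.97$ and $10.72$ are the most extreme ratios observed among all spider splittings enumerated over the minimally rigid graphs up to the vertex count for which the data are complete. Because the proposition asks only for one-sided bounds, exhaustiveness is not strictly required—a single small-ratio witness and a single large-ratio witness suffice—but it does confirm that the reported pairs are genuinely extremal in the searched range. The main obstacle is therefore computational rather than conceptual: reliably evaluating $\lamIIfunc$ on the candidate graphs and certifying the spider-splitting relation from their encodings. Once the two witnesses are fixed, the conclusion is immediate from the defining inequalities for $k_1$ and $k_2$, with no further estimation needed.
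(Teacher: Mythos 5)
Your proposal is correct and is essentially the paper's own argument: the proposition is proved purely by certificate, namely the pairs recorded in \Cref{tab:Laman_Increase_Ssplit} (the ten-vertex pair with ratio $1232/624\approx 1.97$ witnessing $k_1$, and the eleven-vertex pair with ratio $5492/512\approx 10.73\geq 10.72$ witnessing $k_2$), with the realization counts computed by the algorithm of \cite{PlaneCount} and the spider-splitting relation checked from the encodings. The one-witness-per-bound logic you describe is exactly what the paper uses; the exhaustive search only serves to find extremal witnesses, not to justify the inequalities.
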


\subsection{Sphere}
Since it is the same class of graphs that are minimally rigid as in the plane, also the constructions steps are the same.
However, since the graphs that achieve high number of realizations are different, it is not surprising that also certain construction steps show different behavior on the sphere.

It is known that 0-extensions increase the realization count by a factor of 2 (compare \cite[Lem~7.1]{PlaneSphere}).

\subsubsection{Extension constructions}
Similarly to the plane the number of spherical realizations of a graph $G'$ after a 1-extension step of type~E1a on a graph $G$ is increased by a factor of two for all graphs $G'$ with at most 13 vertices.
\begin{conjecture}
	Let $G$ be a minimally rigid graph and $G'$ be obtained from $G$ by a 1-extension of type E1a.
	Then $\frac{\lamSII{G'}}{\lamSII{G}}=2$.
\end{conjecture}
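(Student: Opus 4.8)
The plan is to split the statement into the two inequalities $\lamSII{G'}\geq 2\,\lamSII{G}$ and $\lamSII{G'}\leq 2\,\lamSII{G}$ and to treat them separately. The lower bound is the routine half: a type~E1a step (\Cref{fig:Henneberg}) deletes one edge $ac$ of a triangle $abc$ and attaches a new degree-3 vertex $z$ adjacent to $a,b,c$, so the local gadget on $\{a,b,c,z\}$ (edges $ab,bc,za,zb,zc$) is itself minimally rigid. The argument giving $\lamII{G'}\geq 2\,\lamII{G}$ in the plane, from \cite[Lem~4.6]{Jackson2018}, only uses this rigidity of the gadget together with a circle--circle intersection count; replacing the two circles by two spherical circles, exactly as in the spherical computations of \cite{PlaneSphere,SphereCount}, carries it over verbatim. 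Hence everything rests on the reverse inequality $\lamSII{G'}\leq 2\,\lamSII{G}$.

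For the upper bound I would argue with the degree of the spherical rigidity map in the style of \Cref{lem:subrc}. Fix generic lengths for the edges of $G-ac$; after gauge-fixing, their realizations form a one-dimensional variety $C$, and those realizations that in addition satisfy $\lVert a-c\rVert^2=\lambda_{ac}$ are exactly the realizations of $G$, so that $\lamSII{G}=\deg(\chi)$ for $\chi=\lVert a-c\rVert^2$ regarded as a function on $C$. On the $G'$ side the two spherical placements of $z$ forced by its constraints to $a$ and $b$ realize a double cover $\widetilde C\to C$, and the realizations of $G'$ are the fibre of $\psi=\lVert z-c\rVert^2$ over $\lambda_{zc}$ on $\widetilde C$, so that $\lamSII{G'}=\deg(\psi)$. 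The statement is therefore equivalent to $\deg(\psi)=2\deg(\chi)$.

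Because the sides $za$, $zb$, $ab$ are all of fixed length along $\widetilde C$, the triangle $azb$ is congruent throughout, so on each sheet $z$ is rigidly attached to the edge $ab$ and $\psi$ restricts to a squared-distance function from a frame-fixed point to $c$---heuristically of the same degree as $\chi$, which would give the factor $2$ at once. Making this rigorous is the hard part, and is exactly why the statement is phrased as a conjecture rather than a theorem: one must show that the attachment map $z=z(a,b)$ contributes no extra branches, equivalently that $\psi=\lambda_{zc}$ acquires no solutions beyond those matched to $C$. On the sphere this is delicate at the points at infinity of the edge-length equations---precisely the solutions responsible for $\lamSIIfunc$ exceeding $\lamIIfunc$ in general---so a clean intersection-theoretic cancellation on $\widetilde C$ seems to be needed. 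In the absence of such an argument, the claim has been checked exhaustively for every minimally rigid $G$ with at most $13$ vertices, which is the evidence behind the conjecture.
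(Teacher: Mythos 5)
You should first note that the statement you were asked to prove is labelled a \emph{conjecture} in the paper: the paper offers no proof at all, only the remark that the factor equals $2$ in an exhaustive computation over all minimally rigid graphs with at most $13$ vertices. Your proposal is therefore not competing with an argument in the paper; it goes beyond it by sketching a degree-theoretic strategy (in the spirit of \Cref{lem:subrc}, fibering the realization space of $G'$ over a curve of realizations of $G-ac$) and by honestly isolating where that strategy breaks down, namely at the upper bound $\lamSII{G'}\leq 2\,\lamSII{G}$. That diagnosis is sound, and your fallback on the exhaustive verification up to $13$ vertices is exactly the evidence the paper itself relies on.

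One caveat: you assert that the lower bound $\lamSII{G'}\geq 2\,\lamSII{G}$ is the ``routine half'' because the planar argument of \cite[Lem~4.6]{Jackson2018} carries over verbatim to the sphere. The paper never makes this claim. For the spherical versions of the extension bounds it only reports that the minimal factor observed in computations is $2$ (see the discussion around \Cref{tab:SLaman_Increase_H2b}), whereas the citation of \cite[Lem~4.6]{Jackson2018} is reserved for the planar case. Since the spherical and planar counts genuinely differ --- the paper emphasizes $\lamII{G}\leq\lamSII{G}$ with strict inequality possible, driven by additional solutions of the spherical length equations --- transferring the circle--circle intersection argument to spherical circles requires an actual verification rather than an appeal to analogy. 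So even the half you present as settled is, as far as this paper is concerned, also only computationally supported, and your proposal should be read as a proof strategy plus empirical evidence rather than a partial proof.
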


While the minimal factor for a type~E1b step is again two in all the computations, the maximal ones differ from those in the plane (see \Cref{tab:SLaman_Increase_H2b}).
\begin{proposition}
	Assume that there are universal constants $k_1$ and $k_2$ bounding $\lamSII{G'}/\lamSII{G}$ from below and above respectively for all minimally rigid graphs $G$ and $G'$ where $G'$ is obtained from $G$ by a 1-extension of type E1b.
	Then $k_1\leq 2$ and $k_2 \geq 21$.
\end{proposition}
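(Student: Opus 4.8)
The plan is to prove both inequalities by exhibiting explicit certificate graphs, exactly as in the analogous plane statement witnessed by \Cref{tab:Laman_Increase_H2b}. The logic is purely extremal: a hypothetical universal upper bound $k_2$ must satisfy $k_2 \geq \lamSII{G'}/\lamSII{G}$ for \emph{every} E1b pair $(G,G')$, so a single pair with ratio $21$ forces $k_2 \geq 21$; dually, a universal lower bound $k_1$ must satisfy $k_1 \leq \lamSII{G'}/\lamSII{G}$ for every such pair, so a single pair with ratio $2$ forces $k_1 \leq 2$. No optimality of these witnesses is claimed; I only need their existence.

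First I would settle $k_1 \leq 2$. On the sphere the factor of a type E1b step is bounded below by $2$ (the spherical analogue of \cite[Lem~4.6]{Jackson2018}, and every computed example realises at least this), so the smallest attainable ratio is $2$. It therefore suffices to produce one E1b pair $(G,G')$ with $\lamSII{G'}=2\,\lamSII{G}$. Such a pair is recorded in \Cref{tab:SLaman_Increase_H2b}; I would verify that $G'$ is genuinely a type E1b extension of $G$, i.e.\ that the three neighbours of the new degree-three vertex span exactly one edge in $G'$ (cf.\ \Cref{fig:Henneberg}), and then report the two spherical realization counts computed with the combinatorial algorithm of \cite{SphereCount}.

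Next I would establish $k_2 \geq 21$ by the same mechanism, exhibiting from \Cref{tab:SLaman_Increase_H2b} an E1b pair whose spherical counts satisfy $\lamSII{G'}/\lamSII{G}=21$. Because the counting algorithm of \cite{SphereCount} is exact (unlike the probabilistic Gröbner-basis computations used in dimension $\geq 3$), once the graph encodings are fixed the ratio is a certified integer quotient, so the only verification needed is the structural check that the step is of type E1b together with the reproduction of the two counts.

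The main obstacle is not the final verification but the \emph{search}: the extremal spherical factors differ from the planar ones (cf.\ $k_2 \geq 9.75$ in the plane versus $21$ here), so planar certificates cannot be reused, and one must comb through large families of minimally rigid graphs and all their E1b extensions to locate a pair whose ratio is as large as $21$. A secondary subtlety is bookkeeping the E1b typing consistently with \Cref{fig:Henneberg}, since miscounting the edges among the three chosen neighbours would silently reclassify the step as E1a (two edges) or E1c (zero edges) and invalidate the certificate.
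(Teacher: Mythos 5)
Your proposal is correct and follows exactly the paper's approach: the proposition is proved by exhibiting certificate pairs from \Cref{tab:SLaman_Increase_H2b} (e.g.\ the 12-vertex pair with spherical counts $1024$ and $21504$ giving the ratio $21$, and any of the listed pairs with ratio $2$), whose counts are certified by the exact combinatorial algorithm on the sphere. The only remark is that your appeal to a spherical analogue of the lower bound from \cite{Jackson2018} is unnecessary for $k_1\leq 2$, as you yourself note -- a single pair realizing the ratio $2$ already suffices.
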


Also for steps of type~E1c the factors and representative graphs are different to the plane.
Indeed we do get even smaller minimal factors (see \Cref{tab:SLaman_Increase_H2c}).
\begin{proposition}
	Assume that there are universal constants $k_1$ and $k_2$ bounding $\lamSII{G'}/\lamSII{G}$ from below and above respectively for all minimally rigid graphs $G$ and $G'$ where $G'$ is obtained from $G$ by a 1-extension of type E1c.
	Then $k_1\leq 3/2$ and $k_2 \geq 26$.
\end{proposition}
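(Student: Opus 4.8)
The proposition is a one-sided extremal claim, so the plan is to exhibit explicit certificate pairs rather than to determine the true extremes. If $k_1$ and $k_2$ are universal bounds for the quotient $\lamSII{G'}/\lamSII{G}$ over all type E1c steps, then in particular every individual pair $(G,G')$ satisfies $k_1 \le \lamSII{G'}/\lamSII{G} \le k_2$. Consequently $k_1$ is at most the smallest quotient that actually occurs and $k_2$ is at least the largest one. Thus, to prove $k_1 \le 3/2$ it suffices to produce a single pair whose quotient is $3/2$, and to prove $k_2 \ge 26$ it suffices to produce a single pair whose quotient is $26$; no tightness argument is needed, and even a quotient below $3/2$ (resp.\ above $26$) would do.

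For each certificate the plan is the following. First I would fix a minimally rigid graph $G$ together with a distinguished triple of its vertices that is pairwise non-adjacent. Then I would perform the associated type E1c $1$-extension to obtain $G'$, adding the new degree-three vertex joined to those three vertices and deleting the prescribed edge, and check against \Cref{fig:Henneberg} that the step is genuinely of type E1c (zero edges among the attachment vertices) rather than E1a or E1b. Finally I would compute $\lamSII{G}$ and $\lamSII{G'}$ using the spherical counting algorithm of \cite{SphereCount} and its implementation, and read off the quotient. The concrete graphs, together with their integer encodings and realization numbers, are recorded in \Cref{tab:SLaman_Increase_H2c}, with encodings collected in \Cref{sec:factor-cert}; selecting the row whose ratio equals $3/2$ and the row whose ratio equals $26$ closes both inequalities simultaneously.

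The logical part of the argument is immediate, so the entire substance lies in the computation and its verification. The main obstacle is therefore practical rather than conceptual: the spherical count is produced by an exponential combinatorial procedure, so exhibiting a witness attaining a jump as large as $26$ requires a graph with enough vertices to realize such a factor, and the reported values must be trusted only after cross-checking (as done throughout the paper by repeating the computations). A secondary but genuine subtlety is the correct classification of the extension as E1c, which depends on confirming non-adjacency of the three attachment vertices in $G$; misclassifying the step would attribute the observed factor to the wrong construction type and invalidate the certificate.
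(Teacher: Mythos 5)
Your proposal matches the paper's (implicit) proof exactly: the proposition is established purely by exhibiting certificate pairs, and the paper does precisely this via \Cref{tab:SLaman_Increase_H2c}, whose rows with ratio $3/2$ (e.g.\ $n=6$ through $12$) and ratio $26$ (at $n=12$) witness the two inequalities, with no further argument needed. One small correction to your description of the operation: for a type E1c step the three attachment vertices are \emph{not} pairwise non-adjacent in $G$ — they span exactly one edge, namely the edge that gets deleted (cf.\ \Cref{fig:Henneberg}); if they were pairwise non-adjacent there would be no edge to remove and adding a degree-three vertex would destroy minimal rigidity.
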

Again since we have complete data for graph with less than 13 vertices and since $1.5$ is the lowest factor for all graphs with $6\leq n \leq 12$ we may conjecture that $k_1=1.5$.

\subsubsection{Splittings}
The results for vertex splitting on the sphere can be seen in \Cref{tab:SLaman_Increase_Vsplit}.
\begin{proposition}
	Assume that there are universal constants $k_1$ and $k_2$ bounding $\lamSII{G'}/\lamSII{G}$ from below and above respectively for all minimally rigid graphs $G$ and $G'$ where $G'$ is obtained from $G$ by a vertex splitting.
	Then $k_1\leq 2$ and $k_2 \geq 20.5$.
\end{proposition}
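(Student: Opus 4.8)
The plan is to prove both inequalities the same way the analogous propositions for the plane are proven: by exhibiting explicit certificate pairs $(G,G')$ in which $G'$ arises from $G$ by a vertex splitting, and reading off their spherical realization counts. The logical skeleton is elementary. Since $k_1$ is \emph{assumed} to bound $\lamSII{G'}/\lamSII{G}$ from below over all admissible pairs, any single pair we exhibit forces $k_1$ to be at most its particular ratio; dually, since $k_2$ bounds the same ratio from above over all pairs, any single pair forces $k_2$ to be at least its ratio. Thus it suffices to produce two witnesses: one with a small ratio and one with a large ratio.

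First I would locate, among the exhaustively generated minimally rigid graphs of small vertex count, a vertex-splitting pair attaining the minimal observed factor. The data summarized in \Cref{tab:SLaman_Increase_Vsplit} show that the smallest factor is exactly $2$, consistent with the known lower bound of $2$ for vertex splitting from \cite[Lem~4.6]{Jackson2018} carried over to the sphere. Taking such a pair $(G,G')$ with $\lamSII{G'}/\lamSII{G}=2$ immediately yields $k_1\leq 2$. Next I would extract from the same data set the pair realizing the largest observed factor; the table records a pair with $\lamSII{G'}/\lamSII{G}=20.5$, and since $k_2$ must dominate every such ratio, this certifies $k_2\geq 20.5$. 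In both cases the spherical counts $\lamSII{G}$ and $\lamSII{G'}$ are computed with the combinatorial algorithm of \cite{SphereCount}, under the reflection convention fixed in \Cref{def:laman-number-2d}.

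The step that requires the most care — though it is bookkeeping rather than a genuine obstacle — is verifying the certificates. One must confirm that each recorded $G'$ is genuinely obtained from its $G$ by a legitimate vertex split, with the split vertex, the partition of its incident edges, and the new vertex all matching the operation depicted in \Cref{fig:splitting}, and that the stated integer encodings decode to the intended graphs. Because the search over small minimally rigid graphs is exhaustive, no additional argument is needed: the two witness pairs alone establish $k_1\leq 2$ and $k_2\geq 20.5$.
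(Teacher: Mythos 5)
Your approach is exactly the paper's: the proposition has no written proof beyond a pointer to \Cref{tab:SLaman_Increase_Vsplit}, and the intended argument is precisely the elementary one you give (any exhibited pair forces $k_1$ down to its ratio and $k_2$ up to its ratio). One factual slip: the vertex-splitting table does \emph{not} contain a pair with $\lamSII{G'}/\lamSII{G}=20.5$; its maximal recorded factors are $20$ (at $11$ vertices) and $28$ (at $12$ vertices), while $20.5$ is the value appearing in the spider-splitting table \Cref{tab:SLaman_Increase_Ssplit}. Had you actually used the ratio-$20$ witness you would only get $k_2\geq 20$, which does not suffice; but the ratio-$28$ pair in the same table gives $k_2\geq 28\geq 20.5$ a fortiori, so the stated conclusion is certified by the paper's data and your argument stands once the correct witness is named. (The mismatch between the proposition's constant $20.5$ and the table's $28$ appears to be an infelicity of the paper itself, likely copied from the spider-splitting case.)
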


The results for spider splitting on the sphere can be seen in \Cref{tab:SLaman_Increase_Ssplit}.
\begin{proposition}
	Assume that there are universal constants $k_1$ and $k_2$ bounding $\lamSII{G'}/\lamSII{G}$ from below and above respectively for all minimally rigid graphs $G$ and $G'$ where $G'$ is obtained from $G$ by a spider splitting.
	Then $k_1\leq 2$ and $k_2 \geq 20.5$.
\end{proposition}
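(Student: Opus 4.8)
The plan is to establish the two inequalities independently, in each case by producing one explicit pair of minimally rigid graphs related by a spider splitting and reading off the ratio of their spherical realization numbers; the assumed universality of $k_1$ and $k_2$ then immediately yields the stated bounds. No structural theorem about spider splittings on the sphere is required here, only the certificates together with the computed values of $\lamSIIfunc$.

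For the bound $k_1 \leq 2$, I would use that, by hypothesis, $\lamSII{G'}/\lamSII{G} \geq k_1$ for \emph{every} pair $(G,G')$ in which $G'$ arises from $G$ by a spider splitting. Hence $k_1$ cannot exceed the ratio realized by any single admissible pair, so it suffices to exhibit one certificate pair whose ratio is $2$. I would take such a pair from the exhaustive search over small minimally rigid graphs: record the encodings of $G$ and of $G'$, together with the two values $\lamSII{G}$ and $\lamSII{G'}$ obtained from the spherical counting algorithm of \cite{SphereCount}. These entries are collected in \Cref{tab:SLaman_Increase_Ssplit}, and reading off $\lamSII{G'}/\lamSII{G}=2$ for that pair gives $k_1\leq 2$ at once.

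Symmetrically, for $k_2 \geq 20.5$ I would invoke that $\lamSII{G'}/\lamSII{G}\leq k_2$ for all admissible pairs, so $k_2$ is at least the ratio attained by any particular one; it therefore suffices to exhibit a pair with ratio $20.5 = 41/2$. The encodings and computed counts certifying this value are again the relevant rows of \Cref{tab:SLaman_Increase_Ssplit} (see also \Cref{sec:factor-cert}). The deduction is then routine, reducing each inequality to a single certificate. The genuine effort is computational rather than conceptual: it lies in the complete enumeration of all spider splittings of minimally rigid graphs up to the treated vertex bound, together with the (exponential but feasible at this size) evaluation of $\lamSIIfunc$ on every resulting graph. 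I expect the main obstacle to be guaranteeing the completeness and correctness of that enumeration and of the realization-count computations, since it is precisely this exhaustiveness that justifies reporting $2$ and $20.5$ as the extremal observed ratios; once the certificates are in hand, the inequalities follow with no further argument.
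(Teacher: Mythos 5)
Your proposal is correct and takes essentially the same route as the paper: the proposition is established purely by certificate pairs from the exhaustive computation, recorded in \Cref{tab:SLaman_Increase_Ssplit} (a pair with spherical counts $8$ and $16$ gives $k_1\leq 2$, and the pair with counts $512$ and $10496$ gives the ratio $20.5$ and hence $k_2\geq 20.5$). Your only slight overstatement is at the end: exhaustiveness of the enumeration is not needed for this proposition (only for the paper's conjectures about exact extremal factors), since, as you note earlier, a single correctly computed certificate per inequality suffices.
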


\subsection{Space}
For computations in three dimensional space we need to restrict ourselves to applying constructions on graphs with at most nine vertices since afterwards the probabilistic Gröbner basis approach is not efficient any more.
Nevertheless, we can make some interesting observations.
Note, however, that all computations are done with a random choice of edge lengths and are therefore probabilistic even after having checked several times.

\subsubsection{Extension constructions}
There is a generalization of the extension constructions to three dimensions.
In this case it is known that a third type is needed to construct all minimally 3-rigid graphs but with this third step rigidity is not always preserved.
Here, we restrict ourselves to certain subclasses of the construction steps which seem to be interesting.
Note however, that these subclasses do not suffice to construct all minimally 3-rigid graphs.

\Cref{fig:Henneberg3d:I} shows some rigidity preserving constructions.
It can be easily seen that the type~E0 extensions increase the number of realizations by a factor of two always (compare also \cite[Lem~7.1]{PlaneSphere}).
\begin{figure}[ht]
	\centering
	\begin{tikzpicture}[scale=1.25]
		\node[eframe] (ef) at (1.75,-0.35) {type E0};
		\draw[eframe] (ef) -- ++(2,0) -- ++ (0,1.466) -- ++(-4,0) -- ++(0,-1.466) -- (ef);
		\node[vertex] (a) at (0,0) {};
		\node[vertex] (b) at (1,0) {};
		\node[vertex] (c) at (0.5,0.866025) {};
		\node[vertex] (d) at (2.5,0) {};
		\node[vertex] (e) at (3.5,0) {};
		\node[vertex] (f) at (3,0.866025) {};
		\node[nvertex] (g) at (3,0.33) {};
		\draw[edgeq] (a)edge(b) (a)edge(c) (b)edge(c) (d)edge(e) (d)edge(f) (e)edge(f);
		\draw[nedge] (d)edge(g) (e)edge(g) (f)edge(g);
		\draw[construle] (1.5,0.5) -- (2,0.5);
	\end{tikzpicture}

	\begin{tikzpicture}[scale=1.25]
		\node[eframe] (ef) at (1.75,-0.35) {type E1s1};
		\draw[eframe] (ef) -- ++(2,0) -- ++ (0,1.6) -- ++(-4,0) -- ++(0,-1.6) -- (ef);
		\node[vertex] (a) at (0,0) {};
		\node[vertex] (b) at (1,0) {};
		\node[vertex] (c) at (1,1) {};
		\node[vertex] (d) at (0,1) {};
		\node[vertex] (e) at (2.5,0) {};
		\node[vertex] (f) at (3.5,0) {};
		\node[vertex] (g) at (3.5,1) {};
		\node[vertex] (h) at (2.5,1) {};
		\node[nvertex] (i) at (3,0.25) {};
		\draw[oedge] (a)edge(b);
		\draw[nedge] (e)edge(i) (f)edge(i) (g)edge(i) (h)edge(i);
		\draw[construle] (1.5,0.5) -- (2,0.5);
	\end{tikzpicture}
	\begin{tikzpicture}[scale=1.25]
		\node[eframe] (ef) at (1.75,-0.35) {type E1s30};
		\draw[eframe] (ef) -- ++(2,0) -- ++ (0,1.6) -- ++(-4,0) -- ++(0,-1.6) -- (ef);
		\node[vertex] (a) at (0,0) {};
		\node[vertex] (b) at (1,0) {};
		\node[vertex] (c) at (1,1) {};
		\node[vertex] (d) at (0,1) {};
		\node[vertex] (e) at (2.5,0) {};
		\node[vertex] (f) at (3.5,0) {};
		\node[vertex] (g) at (3.5,1) {};
		\node[vertex] (h) at (2.5,1) {};
		\node[nvertex] (i) at (3,0.25) {};
		\draw[oedge] (a)edge(b);
		\draw[edge] (a)edge(d) (b)edge(c) (c)edge(d);
		\draw[edge] (e)edge(h) (f)edge(g) (g)edge(h);
		\draw[nedge] (e)edge(i) (f)edge(i) (g)edge(i) (h)edge(i);
		\draw[construle] (1.5,0.5) -- (2,0.5);
	\end{tikzpicture}
	\begin{tikzpicture}[scale=1.25]
		\node[eframe] (ef) at (1.75,-0.35) {type E1s63};
		\draw[eframe] (ef) -- ++(2,0) -- ++ (0,1.6) -- ++(-4,0) -- ++(0,-1.6) -- (ef);
		\node[vertex] (a) at (0,0) {};
		\node[vertex] (b) at (1,0) {};
		\node[vertex] (c) at (1,1) {};
		\node[vertex] (d) at (0,1) {};
		\node[vertex] (e) at (2.5,0) {};
		\node[vertex] (f) at (3.5,0) {};
		\node[vertex] (g) at (3.5,1) {};
		\node[vertex] (h) at (2.5,1) {};
		\node[nvertex] (i) at (3,0.25) {};
		\draw[oedge] (a)edge(b);
		\draw[edge] (a)edge(c) (a)edge(d) (b)edge(c) (b)edge(d) (c)edge(d);
		\draw[edge] (e)edge(g) (e)edge(h) (f)edge(g) (f)edge(h) (g)edge(h);
		\draw[nedge] (e)edge(i) (f)edge(i) (g)edge(i) (h)edge(i);
		\draw[construle] (1.5,0.5) -- (2,0.5);
	\end{tikzpicture}
	\caption{Extension constructions of different types in dimension three where no or just one edge is deleted.}
	\label{fig:Henneberg3d:I}
\end{figure}
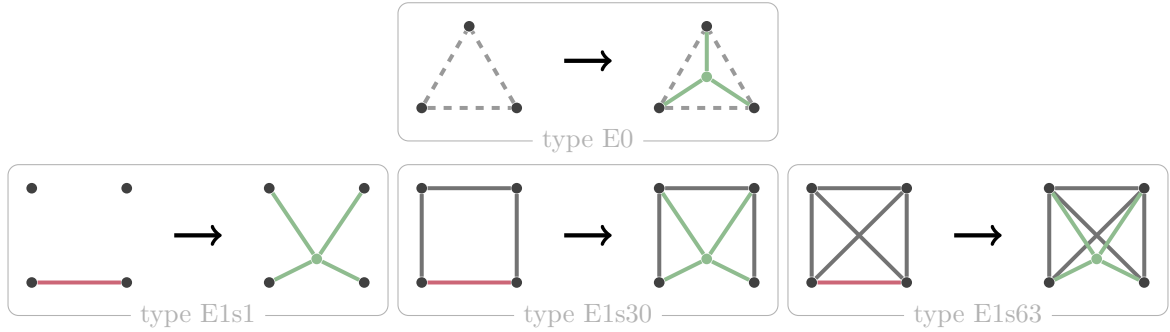

The extensions of type~E1s63 indeed did increase the number of realizations always by a factor of two in our experiments.
\begin{conjecture}
	Let $G$ be a minimally 3-rigid graph and $G'$ be obtained from $G$ by a 1-extension of type E1s63.
	Then $\frac{\lamIII{G'}}{\lamIII{G}}=2$.
\end{conjecture}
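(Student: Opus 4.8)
The plan is to mimic the map-degree framework of \Cref{lem:subrc}, fibering the rigidity maps of $G$ and $G'$ over the edge lengths of their common subgraph $G^-:=G\setminus\{ab\}$, where $ab$ is the edge deleted by the extension. Write the four attachment vertices as $a,b,c,d$, so that $\{a,b,c,d\}$ induces $K_4$ in $G$ and $v$ denotes the new vertex of $G'$ with $v\sim a,b,c,d$. On the five vertices $\{a,b,c,d,v\}$ the operation replaces the $K_4$ on $\{a,b,c,d\}$ by $K_5$ minus the edge $ab$, whose realization counts are $2$ and $4$; this local change is the source of the expected factor $2$, and the plan is to make it global. The crucial structural feature of type \emph{E1s63} is that the triangle $\{a,c,d\}$ (edges $ac,ad,cd$) survives in $G$, in $G'$ and in $G^-$. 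I would pin this rigid triangle as the coordinate-fixing flag; because its three edges are present, for generic lengths this fully determines the positions $p_a,p_c,p_d$, and the same pinning is simultaneously valid for $G$, $G'$ and $G^-$.

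Next I would record that $G^-$ is independent with exactly one degree of freedom (it is $G$ minus one edge, and $G$ is minimally $3$-rigid), so for a generic choice $\boldsymbol\ell^-$ of lengths for the edges of $G^-$ the pinned configuration set $C$ is a curve. Copying the commutative diagram of \Cref{lem:subrc} over the base $\C^{|E(G^-)|}$ gives
\[
	\lamIII{G}=\deg\!\big(\phi_{ab}|_C\big),\qquad
	\lamIII{G'}=\deg\Psi,
\]
where $\phi_{ab}(p)=\tfrac12\lVert p_a-p_b\rVert^2$ and $\Psi\colon C\times\C^3\to\C^4$ sends $(p,v)$ to the four squared distances from $v$ to $p_a,p_b,p_c,p_d$. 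This reduces everything to degrees of explicit functions on $C$.

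Now the $K_4$ hypothesis does its work. Since $p_a,p_c,p_d$ are \emph{fixed} by the pinning, the three equations coming from $v\sim a,c,d$ cut $v$ down to two points $v_1,v_2$ that do not depend on which point of $C$ we sit over; they are reflections of one another across the plane $\Pi$ through the pinned triangle. Hence $\lamIII{G'}=\deg g_1+\deg g_2$ with $g_i(p)=\tfrac12\lVert v_i-p_b\rVert^2|_C$. The reflection $\sigma$ in $\Pi$ fixes $a,c,d$ and preserves $\boldsymbol\ell^-$, so it acts on $C$ and interchanges $v_1,v_2$, giving $g_1=g_2\circ\sigma$ and therefore $\deg g_1=\deg g_2$. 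Thus $\lamIII{G'}=2\deg g_1$, and it remains to identify $\deg g_1$ with $\lamIII{G}=\deg(\phi_{ab}|_C)$.

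Both of these are the degree, restricted to $C$, of the squared distance from $p_b$ to a fixed centre — the point $v_1$ in one case, the pinned vertex position $p_a$ in the other. The family $\{\lVert x-p_b\rVert^2\}_{x\in\C^3}$ is a linear system ($\lVert p_b\rVert^2$ plus an arbitrary affine function of $p_b$), so its fibre degree on $C$ is a constant $\delta$ away from a proper closed set $Z\subset\C^3$ of exceptional centres. The main obstacle is precisely to show that the pinned centre $p_a$ avoids $Z$ for generic $\boldsymbol\ell^-$: a priori $p_a$ is not an arbitrary point of $\C^3$ but the position of a graph vertex. I expect to settle this by an upper-semicontinuity/specialisation argument — exhibit a single (possibly degenerate) length vector for which $\deg(\phi_{ab}|_C)=\delta$, and then spread the equality to a dense open set — after which $\deg g_1=\delta=\deg(\phi_{ab}|_C)=\lamIII{G}$ yields $\lamIII{G'}=2\,\lamIII{G}$. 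That this last genericity step needs care is, I suspect, exactly why the statement is still phrased as a conjecture.
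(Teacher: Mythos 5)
The statement you are proving is not proved in the paper at all: it is stated as a conjecture, and the paper's only evidence is exhaustive computation for all minimally $3$-rigid graphs $G$ with at most $9$ vertices. So your proposal should be judged as an attempt at an open problem, and it does contain a genuine gap --- one you yourself flag. The reduction to degrees on the curve $C$ is sound in outline (it is the same fibering idea as \Cref{lem:subrc}, pushed one step further), the trilateration of $v$ from the three pinned vertices $a,c,d$ giving two centres $v_1,v_2$ is correct for generic data, and the reflection argument giving $\deg g_1=\deg g_2$ works. What is missing is precisely the identification $\deg(\phi_{ab}|_C)=\deg g_1$. The centre $v_1$ is a \emph{generic} point of $\C^3$ (its position is controlled by the three fresh generic lengths $\ell_{va},\ell_{vc},\ell_{vd}$), so $\deg g_1$ equals the generic degree $\delta$ of the squared-distance linear system on $C$; but $p_a$ is pinned at the origin, a highly non-generic centre, and nothing rules out that the origin lies in the exceptional locus $Z$ where the degree drops (solutions escaping to infinity in the closure of the affine curve $C$). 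Without that, your argument yields at best $\lamIII{G'}=2\delta\geq 2\,\lamIII{G}$, i.e.\ a lower bound on the factor, not the claimed equality. Note also that your proposed repair is aimed at the wrong parameter: $Z$ is an exceptional set of \emph{centres} in $\C^3$ for a fixed curve, whereas the only thing you are free to specialise is the length vector $\boldsymbol\ell^-$, which moves the curve $C$ while $p_a$ stays at the origin; exhibiting one good $\boldsymbol\ell^-$ and spreading out is plausible but is exactly the unproven content.

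Two smaller points. First, $p_a,p_c,p_d$ are determined by the pinning and the lengths $\ell_{ac},\ell_{ad},\ell_{cd}$ only up to finitely many sign choices, so they (and hence $v_1,v_2$) are constant only on each irreducible component of $C$, not globally; the argument must be run component by component, and one must check that the component counts recombine correctly. Second, the local computation $\lamIII{K_4}=2$ and $\lamIII{K_5\setminus e}=4$ only gives, via \Cref{lem:subrc}, that $4\mid\lamIII{G'}$; it does not by itself ``globalise'' to the factor $2$, so the burden really does rest entirely on the degree identity above.
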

The conjecture is true for graphs $G$ with at most 9 vertices.
For the other types of 1-extensions the results can be found in \Cref{tab:3dLaman_Increase_H2s1,tab:3dLaman_Increase_H2s30}.

\begin{proposition}
	Assume that there are universal constants $k_1$ and $k_2$ bounding $\lamIII{G'}/\lamIII{G}$ from below and above respectively for all minimally 3-rigid graphs $G$ and $G'$ where $G'$ is obtained from $G$ by a 1-extension.\\
	If the 1-extension is of type~E1s1 then $k_1\leq 1.4$ and $k_2 \geq 16$.\\
	If the 1-extension is of type~E1s30 then $k_1\leq 1.5$ and $k_2 \geq 20$.
\end{proposition}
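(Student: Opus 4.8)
The plan is to prove each of the four inequalities by exhibiting a single certificate pair $(G,G')$ of minimally 3-rigid graphs, where $G'$ arises from $G$ by the relevant 1-extension, whose realization ratio $\lamIII{G'}/\lamIII{G}$ already witnesses the asserted bound. The logic is purely that of a universal quantifier. If $k_1$ is claimed to bound $\lamIII{G'}/\lamIII{G}$ from below for \emph{all} admissible pairs of the given type, then in particular $k_1\leq \lamIII{G'}/\lamIII{G}$ for any pair we choose; hence to force $k_1\leq 1.4$ it suffices to produce one E1s1-pair whose ratio is at most $1.4$. Symmetrically, a universal upper constant satisfies $k_2\geq \lamIII{G'}/\lamIII{G}$, so exhibiting an E1s1-pair with ratio at least $16$ forces $k_2\geq 16$. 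The same reasoning handles the E1s30 line with the values $3/2$ and $20$.

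First I would point to the certificate graphs collected in \Cref{tab:3dLaman_Increase_H2s1,tab:3dLaman_Increase_H2s30}, reading off, for each extension type, the representative $G$ realizing the smallest observed factor and the one realizing the largest. For the lower bounds the smallest factors found are $7/5 = 1.4$ for type~E1s1 and $3/2 = 1.5$ for type~E1s30; for the upper bounds the largest factors found are $16$ and $20$, respectively. Each is certified by the corresponding encoded pair together with its computed values of $\lamIIIfunc$, and substituting these four ratios into the quantifier argument above yields exactly the four stated inequalities.

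Next I would justify the realization counts themselves. As described in \Cref{sec:space}, these are obtained by counting solutions of the edge-length equation system via Gröbner bases, using \msolve\ for the leading-monomial computation with random rational edge lengths and arithmetic modulo a sufficiently large prime. Since this renders the method probabilistic, each count would be recomputed at least ten times, exactly as for the other three-dimensional results, so that the reported factors are the ones we are confident in.

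The main obstacle is twofold and lies entirely on the computational side rather than in the argument. On the one hand, the probabilistic Gröbner-basis count forces the restriction to graphs with at most nine vertices, so the extremal certificates must be found within this range; one cannot a priori exclude that larger graphs would push the observed factors further, which is precisely why the statement is phrased conditionally in terms of hypothetical universal constants $k_1,k_2$ rather than as an unconditional bound. On the other hand, locating the genuinely extremal pairs requires an exhaustive sweep over all E1s1- and E1s30-extensions of the available minimally 3-rigid graphs, and the force of the proposition rests on this search being complete for the vertex counts considered.
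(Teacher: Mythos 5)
Your proposal is correct and follows essentially the same route as the paper, which simply points to the certificate pairs in \Cref{tab:3dLaman_Increase_H2s1,tab:3dLaman_Increase_H2s30} (ratios $224/160=1.4$ and $1024/64=16$ for E1s1; $24/16=1.5$ and $1280/64=20$ for E1s30) and relies on the observation that any universal lower (resp.\ upper) bound must be at most (resp.\ at least) any single observed ratio. Your only slight overstatement is that the exhaustiveness of the search is needed for ``the force of the proposition''---the stated inequalities require just one witness pair each; exhaustiveness only matters for knowing these are the extremal factors among small graphs.
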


There are two major construction steps that delete two edges and add a vertex of degree five.
In one case the two edges do not share a vertex. It is therefore often called X-replacement.
In the other case they do share a vertex and it is called V-replacement.
We want to be a bit more specific and distinguish classes depending on which other edges are in the induced subgraph on the five chosen vertices (see \Cref{fig:Henneberg3d:II}).
Note that the classes we investigate are not complete.
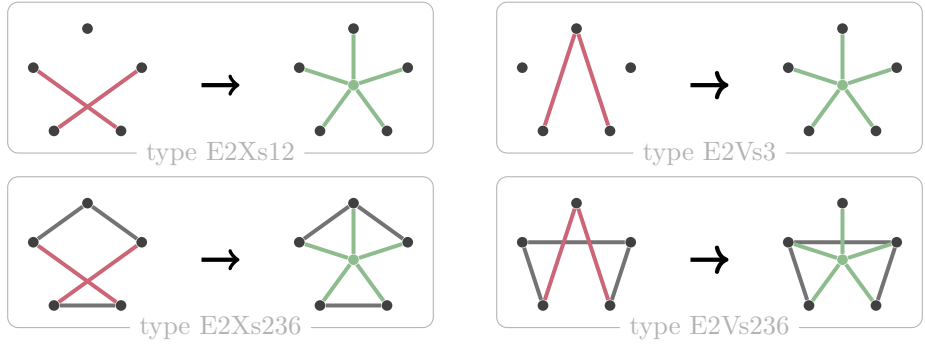
\begin{figure}[ht]
	\centering
	\begin{tikzpicture}[scale=1]
		\node[eframe] (ef) at ($(270:0.9)+(1.75,0)$) {type E2Xs12};
		\draw[eframe] (ef) -- ++(2.8,0) -- ++ (0,2) -- ++(-5.6,0) -- ++(0,-2) -- (ef);
		\begin{scope}[scale=0.75]
			\node[vertex] (a) at (0.587785, -0.809017) {};
			\node[vertex] (b) at (0.951057, 0.309017) {};
			\node[vertex] (c) at (0., 1.) {};
			\node[vertex] (d) at (-0.951057,0.309017) {};
			\node[vertex] (e) at (-0.587785, -0.809017) {};

			\draw[oedge] (a)edge(d) (b)edge(e);
		\end{scope}
		\draw[construle] (1.5,0.) -- (2,0.);
		\begin{scope}[xshift=3.5cm,scale=0.75]
			\node[vertex] (a) at (0.587785, -0.809017) {};
			\node[vertex] (b) at (0.951057, 0.309017) {};
			\node[vertex] (c) at (0., 1.) {};
			\node[vertex] (d) at (-0.951057,0.309017) {};
			\node[vertex] (e) at (-0.587785, -0.809017) {};
			\node[nvertex] (f) at (0,0) {};

			\draw[nedge] (a)edge(f) (b)edge(f) (c)edge(f) (d)edge(f) (e)edge(f);
		\end{scope}
	\end{tikzpicture}
	\qquad
	\begin{tikzpicture}[scale=1]
		\node[eframe] (ef) at ($(270:0.9)+(1.75,0)$) {type E2Vs3};
		\draw[eframe] (ef) -- ++(2.8,0) -- ++ (0,2) -- ++(-5.6,0) -- ++(0,-2) -- (ef);
		\begin{scope}[scale=0.75]
			\node[vertex] (a) at (0.587785, -0.809017) {};
			\node[vertex] (b) at (0.951057, 0.309017) {};
			\node[vertex] (c) at (0., 1.) {};
			\node[vertex] (d) at (-0.951057,0.309017) {};
			\node[vertex] (e) at (-0.587785, -0.809017) {};

			\draw[oedge] (a)edge(c) (c)edge(e);
		\end{scope}
		\draw[ultra thick,->] (1.5,0.) -- (2,0.);
		\begin{scope}[xshift=3.5cm,scale=0.75]
			\node[vertex] (a) at (0.587785, -0.809017) {};
			\node[vertex] (b) at (0.951057, 0.309017) {};
			\node[vertex] (c) at (0., 1.) {};
			\node[vertex] (d) at (-0.951057,0.309017) {};
			\node[vertex] (e) at (-0.587785, -0.809017) {};
			\node[nvertex] (f) at (0,0) {};

			\draw[nedge] (a)edge(f) (b)edge(f) (c)edge(f) (d)edge(f) (e)edge(f);
		\end{scope}
	\end{tikzpicture}

	\begin{tikzpicture}[scale=1]
		\node[eframe] (ef) at ($(270:0.9)+(1.75,0)$) {type E2Xs236};
		\draw[eframe] (ef) -- ++(2.8,0) -- ++ (0,2) -- ++(-5.6,0) -- ++(0,-2) -- (ef);
		\begin{scope}[scale=0.75]
			\node[vertex] (a) at (0.587785, -0.809017) {};
			\node[vertex] (b) at (0.951057, 0.309017) {};
			\node[vertex] (c) at (0., 1.) {};
			\node[vertex] (d) at (-0.951057,0.309017) {};
			\node[vertex] (e) at (-0.587785, -0.809017) {};

			\draw[oedge] (a)edge(d) (b)edge(e);
			\draw[edge] (b)edge(c) (c)edge(d) (a)edge(e);
		\end{scope}
		\draw[construle] (1.5,0.) -- (2,0.);
		\begin{scope}[xshift=3.5cm,scale=0.75]
			\node[vertex] (a) at (0.587785, -0.809017) {};
			\node[vertex] (b) at (0.951057, 0.309017) {};
			\node[vertex] (c) at (0., 1.) {};
			\node[vertex] (d) at (-0.951057,0.309017) {};
			\node[vertex] (e) at (-0.587785, -0.809017) {};
			\node[nvertex] (f) at (0,0) {};

			\draw[nedge] (a)edge(f) (b)edge(f) (c)edge(f) (d)edge(f) (e)edge(f);
			\draw[edge] (b)edge(c) (c)edge(d) (a)edge(e);
		\end{scope}
	\end{tikzpicture}
	\qquad
	\begin{tikzpicture}[scale=1]
		\node[eframe] (ef) at ($(270:0.9)+(1.75,0)$) {type E2Vs236};
		\draw[eframe] (ef) -- ++(2.8,0) -- ++ (0,2) -- ++(-5.6,0) -- ++(0,-2) -- (ef);
		\begin{scope}[scale=0.75]
			\node[vertex] (a) at (0.587785, -0.809017) {};
			\node[vertex] (b) at (0.951057, 0.309017) {};
			\node[vertex] (c) at (0., 1.) {};
			\node[vertex] (d) at (-0.951057,0.309017) {};
			\node[vertex] (e) at (-0.587785, -0.809017) {};

			\draw[edge] (a)edge(b) (b)edge(d) (d)edge(e);
			\draw[oedge] (a)edge(c) (c)edge(e);
		\end{scope}
		\draw[ultra thick,->] (1.5,0.) -- (2,0.);
		\begin{scope}[xshift=3.5cm,scale=0.75]
			\node[vertex] (a) at (0.587785, -0.809017) {};
			\node[vertex] (b) at (0.951057, 0.309017) {};
			\node[vertex] (c) at (0., 1.) {};
			\node[vertex] (d) at (-0.951057,0.309017) {};
			\node[vertex] (e) at (-0.587785, -0.809017) {};
			\node[nvertex] (f) at (0,0) {};

			\draw[edge] (a)edge(b) (b)edge(d) (d)edge(e);
			\draw[nedge] (a)edge(f) (b)edge(f) (c)edge(f) (d)edge(f) (e)edge(f);
		\end{scope}
	\end{tikzpicture}

	\caption{2-extensions of different types in dimension three where two edges are deleted.}
	\label{fig:Henneberg3d:II}
\end{figure}

The resulting factors can be seen in \Cref{tab:3dLaman_Increase_H3Xs12,tab:3dLaman_Increase_H3Vs3,tab:3dLaman_Increase_H3Xs236,tab:3dLaman_Increase_H3Vs236}.

We can consider similar classes of construction steps as in \Cref{fig:Henneberg3d:II} where the subgraph on the five chosen vertices is
ismorphic to other graphs, for instance the complete graph on five vertices minus one edge (511 in integer notation).
We call this classes E2Vs511 and E2Xs511 depending on whether the deleted edges share a vertex or not.
Note that, there are several choices how the edges that are deleted are placed within this subgraph.
For this reason we do not put a figure here.
Note also that the complete graph on five vertices can never be a subgraph of a minimally 3-rigid graph in dimension three because it has too many edges.
In that sense 511 is the largest possible.
Similarly we do so with the induced subgraphs on five vertices with integer representation 239 (see \Cref{fig:smallm2rinteger}).
We use this example because it gives the smallest factor we have found so far.
The results can be seen in \Cref{tab:3dLaman_Increase_H3Xs239,tab:3dLaman_Increase_H3Vs239}.

Steps of type E2Xs511 and E2Vs511 on graphs with at most nine vertices always increase the number of realizations by a factor of two.
\begin{conjecture}
	Let $G$ be a minimally 3-rigid graph and $G'$ be obtained from $G$ by a 1-extension of type E2Xs511 or E2Vs511.
	Then $\frac{\lamIII{G'}}{\lamIII{G}}=2$.
\end{conjecture}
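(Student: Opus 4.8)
The plan is to work, as in the proof of \Cref{lem:subrc}, with the degree of the rigidity map $f_{G,3}$, since $\lamIIIfunc$ is exactly this degree. Write $v$ for the new degree-$5$ vertex of $G'$ and $B=\{a,b,c,d,e\}$ for its neighbourhood. By hypothesis the induced subgraph on $B$ in $G$ is the graph $511$, i.e.\ the triangular bipyramid $K_5-e$, the unique minimally $3$-rigid graph on five vertices, and the two edges $e_1,e_2$ removed by the $\mathrm{E2X}$ or $\mathrm{E2V}$ step both lie inside $B$. The operation changes nothing outside $B\cup\{v\}$, so $G$ and $G'$ share a common ``outer'' graph $G_{\mathrm{out}}$ (all edges not contained in $B\cup\{v\}$), and they differ only in the local pieces $H=G[B]=K_5-e$ and $H'=G'[B\cup\{v\}]$. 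First I would record the one genuinely clean fact: in the over-constrained auxiliary graph $G^+:=G'+\{e_1,e_2\}=G+v+\{va,\dots,ve\}$, a generic realization of $G$ places $B$ at five affinely independent points, so the four distances from $v$ to any affine basis among them determine $v$ uniquely. Hence $\lamIII{G^+}=\lamIII{G}$, and the whole problem becomes understanding how deleting the two redundant edges $e_1,e_2$ from $G^+$ changes the count.

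Next I would set up a commutative square exactly parallel to the one in \Cref{lem:subrc}, but comparing $f_{G',3}$ and $f_{G,3}$ through their common outer part: restriction to $G_{\mathrm{out}}$ together with recording the internal edge lengths factors each rigidity map through a map $\pi$ on $G_{\mathrm{out}}$-data. If the degree of $\pi$ were the same for $G$ and $G'$ — as happens in \Cref{thm:fanformula}, where the two glued pieces meet along a rigid subgraph — then the global ratio would collapse to the purely local ratio $\lamIII{G'}/\lamIII{G}=\lamIII{H'}/\lamIII{H}$. The remaining step would then be a finite, six-vertex computation: one checks directly that for every $\mathrm{E2X}$ and $\mathrm{E2V}$ placement of the deleted edges inside $K_5-e$ one has $\lamIII{H'}=2\,\lamIII{H}$ (this is just the $n=6$ base case and is exactly the kind of check already carried out for small graphs). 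With that, the factor $2$ would follow.

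The hard part — and the reason the statement is only a conjecture rather than a theorem — is precisely the transfer step above. \Cref{thm:fanformula} and the degree-matching argument behind it require the two pieces to be glued along a \emph{rigid} common subgraph. This holds for $G$, where $B$ induces the rigid bipyramid $K_5-e$, but it fails for $G'$: there $B$ induces $K_5$ minus three edges ($7$ edges on $5$ vertices), which is flexible with two internal degrees of freedom and becomes rigid only after $v$ is attached. Consequently the interface of $G_{\mathrm{out}}$ with the local piece is no longer a rigid framework, and one cannot simply quote the gluing formula; one must instead prove that $G_{\mathrm{out}}$ interacts ``neutrally'' with these two new flexibility directions, i.e.\ that attaching the apex $v$ and deleting $e_1,e_2$ does not alter the degree of $\pi$. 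Equivalently, viewing $\lamIII{G'}$ as the number of flexes of $G'-v$ that admit a consistent placement of $v$ and $\lamIII{G}=\lamIII{G^+}$ as a length-fibre of the same configuration surface, one has to show that these two intersection-theoretic counts on that surface always stand in ratio $2$. Establishing this independently of the ambient graph $G_{\mathrm{out}}$ is the obstacle; the empirical verification up to nine vertices strongly suggests the neutrality holds, but a general argument would need genericity control over how the two ``missing-edge'' coordinates of the bipyramid vary as $G_{\mathrm{out}}$ flexes.
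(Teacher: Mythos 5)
The statement you are asked about is labelled a \emph{conjecture} in the paper, and the paper offers no proof of it: the only support given is the sentence that the claim has been verified computationally for all minimally $3$-rigid graphs $G$ with at most $9$ vertices. Your proposal is therefore not being measured against a written argument, and by your own admission it is not a proof either. What you give is a strategy (factor $f_{G',3}$ and $f_{G,3}$ through a common outer map $\pi$ as in \Cref{lem:subrc}, reduce to a local six-vertex computation, and note that $\lamIII{G^+}=\lamIII{G}$ for the over-constrained graph $G^+$) together with an accurate diagnosis of why the strategy does not close: after the two edges $e_1,e_2$ are deleted, the induced subgraph on the five neighbours of $v$ is no longer rigid, so the degree-matching argument behind \Cref{lem:subrc} and \Cref{thm:fanformula} does not apply, and one cannot conclude that $\pi$ has the same degree for $G$ and $G'$. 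That diagnosis is correct and is precisely the reason the paper can only state this as a conjecture; but identifying the obstacle is not the same as overcoming it, so the proposal leaves the statement unproven.

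Two smaller points. First, ``places $B$ at five affinely independent points'' is impossible in $\C^3$ (five points can be at most affinely dependent there); you mean that $B$ is in general position, i.e.\ contains an affine basis, which is what your uniqueness argument for $v$ actually uses, so the conclusion $\lamIII{G^+}=\lamIII{G}$ survives. Second, even the ``finite, six-vertex base case'' you defer to is not automatic: the graph $H'=G'[B\cup\{v\}]$ need not be treated by the existing gluing machinery since its relation to $G_{\mathrm{out}}$ is exactly the non-rigid interface you flag, so the local ratio $\lamIII{H'}/\lamIII{H}=2$, even if verified, would not transfer to the global ratio without the missing neutrality argument. In short: your account is a reasonable research plan and correctly locates the difficulty, but it does not prove the conjecture, and neither does the paper.
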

The conjecture is true for graphs $G$ with at most 9 vertices.

\begin{proposition}
	Assume that there are universal constants $k_1$ and $k_2$ bounding $\lamIII{G'}/\lamIII{G}$ from below and above respectively for all minimally 3-rigid graphs $G$ and $G'$ where $G'$ is obtained from $G$ by a 2-extension.\\
	If the 2-extension is of type~E2Xs12 then $k_1\leq 0.875$ and $k_2 \geq 14$.\\
	If the 2-extension is of type~E2Vs3 then $k_1\leq 0.72$ and $k_2 \geq 14$.\\
	If the 2-extension is of type~E2Xs236 then $k_1\leq 0.95$ and $k_2 \geq 20$.\\
	If the 2-extension is of type~E2Vs236 then $k_1\leq 0.43$ and $k_2 \geq 26$.\\
	If the 2-extension is of type~E2Xs239 then $k_1\leq 1$ and $k_2 \geq 20$.\\
	If the 2-extension is of type~E2Vs239 then $k_1\leq 0.25$ and $k_2 \geq 22$.
\end{proposition}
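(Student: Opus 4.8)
The plan is to prove each of the six pairs of inequalities by exhibiting explicit certificate pairs $(G,G')$, where $G'$ is obtained from $G$ by the corresponding 2-extension type, and then reading off the ratio $\lamIII{G'}/\lamIII{G}$. The logic is that of a witness, identical to the earlier propositions for the E1-type steps. A hypothetical universal lower bound $k_1$ must satisfy $k_1\leq\lamIII{G'}/\lamIII{G}$ for \emph{every} admissible pair, so any single pair whose ratio is at most the claimed value forces $k_1$ below that value; symmetrically, a universal upper bound $k_2$ must dominate the ratio of every admissible pair, so a single pair with a large ratio forces $k_2$ above the claimed value. Hence for each of the six types I would present one certificate realizing a small ratio (bounding $k_1$ from above) and one realizing a large ratio (bounding $k_2$ from below). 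Because of the rounding, for the $k_1$ claims the reported decimal is an upper estimate of the witnessed ratio and for the $k_2$ claims it is a lower estimate, both chosen so that the stated inequality is preserved.

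For the actual witnesses I would, for each type, first pin down the induced-subgraph condition on the five chosen vertices that defines the subclass, so that each certificate genuinely lies in E2Xs12, E2Vs3, E2Xs236, E2Vs236, E2Xs239, or E2Vs239. The realization numbers $\lamIII{}$ are then computed by the probabilistic Gröbner basis method of \Cref{sec:space}: random integer edge lengths, a sufficiently large prime modulus, leading-monomial counting via \msolve, and at least ten independent repetitions. The certificate graphs and their counts are collected in \Cref{tab:3dLaman_Increase_H3Xs12,tab:3dLaman_Increase_H3Vs3,tab:3dLaman_Increase_H3Xs236,tab:3dLaman_Increase_H3Vs236,tab:3dLaman_Increase_H3Xs239,tab:3dLaman_Increase_H3Vs239}. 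Note that several of the small-ratio witnesses have ratio below one (for instance exactly $1/4$ for E2Vs239 and exactly $1$ for E2Xs239): this is precisely because a 2-extension deletes two edges while adding only a degree-five vertex and can therefore leave the realization count unchanged or even decrease it, in sharp contrast to the 0- and 1-extensions.

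The main obstacle is not the logical structure, which is elementary, but the reliability of the realization counts underlying the witnesses. Since we assert clean fractional ratios such as $7/8$ and $1/4$, I would need to be confident that no solution was lost to the random specialization of the edge lengths; the repeated computations modulo a large prime serve exactly this purpose, and the agreement of the counts across all repetitions is what upgrades the computed numbers to usable certificates. A secondary, purely finite check is to verify for each witness that the five distinguished vertices of $G'$ induce the prescribed subgraph and that the two removed edges are positioned as the type requires, namely sharing a vertex for the V-variants and disjoint for the X-variants, so that the exhibited pair indeed belongs to the named subclass.
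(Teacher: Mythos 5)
Your proposal matches the paper's own argument: the proposition is proved purely by exhibiting witness pairs $(G,G')$ for each 2-extension subclass, with the certificates and their probabilistically computed realization counts collected in \Cref{tab:3dLaman_Increase_H3Xs12,tab:3dLaman_Increase_H3Vs3,tab:3dLaman_Increase_H3Xs236,tab:3dLaman_Increase_H3Vs236,tab:3dLaman_Increase_H3Xs239,tab:3dLaman_Increase_H3Vs239}, and the logic that a single small (resp.\ large) ratio forces $k_1$ down (resp.\ $k_2$ up) is exactly what the paper relies on. Your description of the verification steps (subclass membership of the five chosen vertices, V- versus X-placement of the deleted edges, repeated modular Gröbner computations) is consistent with \Cref{sec:space} and the appendix, so the proposal is correct and takes essentially the same route.
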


\subsubsection{Splittings}
Vertex and spider splitting can be generalized to any dimension (compare \cite{NixonRoss}).
Vertex splitting preserves $d$-rigidity \cite{WhiteleyVertex}. Spider splitting seems to preserve 3-rigidity as well.
\Cref{fig:splitting3d} shows an illustration for dimension three.

\begin{figure}[ht]
	\centering
	\begin{tikzpicture}[]
		\node[eframe] (es) at (0,-1) {vertex splitting};
		\draw[eframe] (es) -- ++(3.9,0) -- ++ (0,2) -- ++(-7.1,0) -- ++(0,-2) -- (es);
		\begin{scope}[xshift=-2cm]
			\begin{scope}[scale=1.5]
				\node[hvertex] (v) at (0,0) {};
				\node[vertex] (a1) at (-0.5,0.2) {};
				\node[vertex] (a2) at (-0.5,-0.2) {};
				\node[vertex] (b1) at (0.5,0.2) {};
				\node[vertex] (b2) at (0.5,-0.2) {};
				\node[vertex] (w1) at (0,-0.5) {};
				\node[vertex] (w2) at (0,0.5) {};
				\draw[genericgraph] (-0.5,0) circle[x radius=0.15cm, y radius=0.35cm];
				\draw[genericgraph] (0.5,0) circle[x radius=0.15cm, y radius=0.35cm];
				\draw[moreedges] (a1)edge(a2);
				\draw[moreedges] (b1)edge(b2);
				\draw[edge] (a1)edge(v) (a2)edge(v);
				\draw[hedge] (w1)edge(v) (w2)edge(v);
				\draw[oedge] (b1)edge(v) (b2)edge(v);
			\end{scope}
		\end{scope}
		\draw[construle] (-0.25,0) -- (0.25,0);
		\begin{scope}[xshift=2.4cm]
			\begin{scope}[scale=1.5]
				\node[hvertex] (v) at (-0.2,0) {};
				\node[nvertex] (vp) at (0.2,0) {};
				\node[vertex] (a1) at (-0.7,0.2) {};
				\node[vertex] (a2) at (-0.7,-0.2) {};
				\node[vertex] (b1) at (0.7,0.2) {};
				\node[vertex] (b2) at (0.7,-0.2) {};
				\node[vertex] (w1) at (0,-0.5) {};
				\node[vertex] (w2) at (0,0.5) {};
				\draw[genericgraph] (-0.7,0) circle[x radius=0.15cm, y radius=0.35cm];
				\draw[genericgraph] (0.7,0) circle[x radius=0.15cm, y radius=0.35cm];
				\draw[moreedges] (a1)edge(a2);
				\draw[moreedges] (b1)edge(b2);
				\draw[edge] (a1)edge(v) (a2)edge(v);
				\draw[hedge] (w1)edge(v) (w2)edge(v);
				\draw[nedge] (b1)edge(vp) (b2)edge(vp) (w1)edge(vp) (w2)edge(vp) (v)edge(vp);
			\end{scope}
		\end{scope}
	\end{tikzpicture}
	\qquad
	\begin{tikzpicture}[scale=1]
		\node[eframe] (es) at (0,-1) {spider splitting};
		\draw[eframe] (es) -- ++(3.9,0) -- ++ (0,2) -- ++(-7.1,0) -- ++(0,-2) -- (es);
		\begin{scope}[xshift=-2cm]
			\begin{scope}[scale=1.5]
				\node[hvertex] (v) at (0,0) {};
				\node[vertex] (a1) at (-0.5,0.2) {};
				\node[vertex] (a2) at (-0.5,-0.2) {};
				\node[vertex] (b1) at (0.5,0.2) {};
				\node[vertex] (b2) at (0.5,-0.2) {};
				\node[vertex] (w1) at (0,-0.5) {};
				\node[vertex] (w2) at (-0.25,0.5) {};
				\node[vertex] (w3) at (0.25,0.5) {};
				\draw[genericgraph] (-0.5,0) circle[x radius=0.15cm, y radius=0.35cm];
				\draw[genericgraph] (0.5,0) circle[x radius=0.15cm, y radius=0.35cm];
				\draw[moreedges] (a1)edge(a2);
				\draw[moreedges] (b1)edge(b2);
				\draw[edge] (a1)edge(v) (a2)edge(v);
				\draw[hedge] (w1)edge(v) (w2)edge(v) (w3)edge(v);
				\draw[oedge] (b1)edge(v) (b2)edge(v);
			\end{scope}
		\end{scope}
		\draw[construle] (-0.25,0) -- (0.25,0);
		\begin{scope}[xshift=2.4cm]
			\begin{scope}[scale=1.5]
				\node[hvertex] (v) at (-0.2,0) {};
				\node[nvertex] (vp) at (0.2,0) {};
				\node[vertex] (a1) at (-0.7,0.2) {};
				\node[vertex] (a2) at (-0.7,-0.2) {};
				\node[vertex] (b1) at (0.7,0.2) {};
				\node[vertex] (b2) at (0.7,-0.2) {};
				\node[vertex] (w1) at (0,-0.5) {};
				\node[vertex] (w2) at (-0.25,0.5) {};
				\node[vertex] (w3) at (0.25,0.5) {};
				\draw[genericgraph] (-0.7,0) circle[x radius=0.15cm, y radius=0.35cm];
				\draw[genericgraph] (0.7,0) circle[x radius=0.15cm, y radius=0.35cm];
				\draw[moreedges] (a1)edge(a2);
				\draw[moreedges] (b1)edge(b2);
				\draw[edge] (a1)edge(v) (a2)edge(v);
				\draw[hedge] (w1)edge(v) (w2)edge(v) (w3)edge(v);
				\draw[nedge] (b1)edge(vp) (b2)edge(vp) (w1)edge(vp) (w2)edge(vp) (w3)edge(vp);
			\end{scope}
		\end{scope}
	\end{tikzpicture}
	\caption{Splitting vertices to get larger minimally 3-rigid graphs. The blue vertex is split, green vertices are new. Dots indicate that there might be more vertices and edges.}
	\label{fig:splitting3d}
\end{figure}
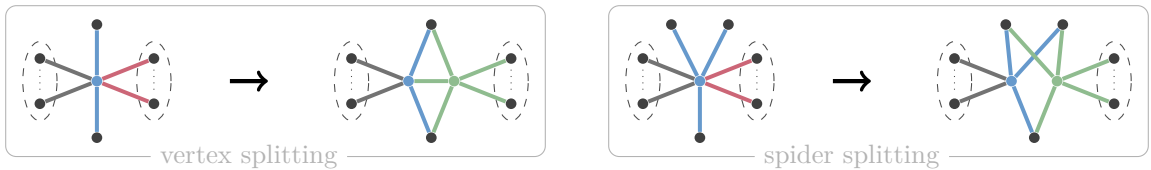

\begin{proposition}
	Assume that there are universal constants $k_1$ and $k_2$ bounding $\lamIII{G'}/\lamIII{G}$ from below and above respectively for all minimally 3-rigid graphs $G$ and $G'$ where $G'$ is obtained from $G$ by a vertex splitting operation.
	Then $k_1\leq 2$ and $k_2 \geq 26$.
\end{proposition}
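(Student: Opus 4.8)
The plan is to prove both inequalities by the certificate method used throughout this section, rather than by any structural argument about vertex splitting. By hypothesis $k_1$ is a universal lower bound and $k_2$ a universal upper bound for the ratio $\lamIII{G'}/\lamIII{G}$ taken over all pairs in which $G'$ is obtained from $G$ by a single vertex split. Hence it suffices to exhibit one admissible pair whose ratio is exactly $2$ (which forces $k_1\leq 2$) and one admissible pair whose ratio is exactly $26$ (which forces $k_2\geq 26$); the two bounds then follow immediately from the meaning of a universal lower, respectively upper, bound.

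First I would search the minimally $3$-rigid graphs on few vertices for a vertex split attaining the factor $2$. Since vertex splitting preserves $3$-rigidity by \cite{WhiteleyVertex} (see \Cref{fig:splitting3d}), every split of a small minimally $3$-rigid graph yields an admissible larger graph $G'$; among these the smallest factor observed in the computations is $2$, and recording one representative pair with $\lamIII{G'}=2\,\lamIII{G}$ certifies $k_1\leq 2$. Second, from the same family I would extract a pair for which the split multiplies the realization count by exactly $26$, certifying $k_2\geq 26$. Both certificate pairs, together with their graph encodings and the corresponding values of $\lamIIIfunc$, are collected in \Cref{sec:factor-cert}.

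The only delicate point is the justification of the two certificates, since the logical step from them to the stated bounds is trivial. The counts $\lamIIIfunc$ are not obtained combinatorially but by the probabilistic Gröbner-basis procedure of \Cref{sec:space}: random edge lengths are substituted, the edge-length system is solved modulo a large prime using \msolve, and the leading-monomial count is converted into a realization number. The main obstacle is therefore twofold. One must verify combinatorially that $G'$ really is a vertex split of $G$, namely that a single vertex has been duplicated with its incident edges distributed between the two copies and the copies joined by an edge; and one must guard against an erroneous probabilistic count, which is handled by repeating each computation at least ten times as described in \Cref{sec:space}. Once these checks succeed for the two exhibited pairs, the proposition is established.
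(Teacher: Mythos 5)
Your proposal is correct and follows essentially the same route as the paper: the proposition is established purely by certificates, namely a vertex-split pair with ratio exactly $2$ (forcing $k_1\leq 2$) and one with ratio exactly $26$ (forcing $k_2\geq 26$), which the paper records in \Cref{tab:3dLaman_Increase_Vsplit}. Your remarks on verifying the split combinatorially and on the probabilistic nature of the Gr\"obner-basis counts match the caveats the paper itself states in \Cref{sec:space}.
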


\begin{proposition}
	Assume that there are universal constants $k_1$ and $k_2$ bounding $\lamIII{G'}/\lamIII{G}$ from below and above respectively for all minimally 3-rigid graphs $G$ and $G'$ where $G'$ is obtained from $G$ by a spider splitting operation.
	Then $k_1\leq 2$ and $k_2 \geq 36$.
\end{proposition}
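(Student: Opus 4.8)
The plan is to establish the two one-sided bounds by producing explicit certificate pairs $(G,G')$, with $G'$ obtained from $G$ by a spider splitting, that realize the claimed extreme ratios. The logical engine is simple: by hypothesis $k_1$ is a lower bound valid for \emph{every} spider splitting, so $k_1 \le \lamIII{G'}/\lamIII{G}$ for each admissible pair, and hence a single pair with $\lamIII{G'}/\lamIII{G}=2$ forces $k_1\le 2$. Dually, $k_2$ is an upper bound valid for every spider splitting, so one pair with $\lamIII{G'}/\lamIII{G}=36$ forces $k_2\ge 36$. Thus the entire proposition reduces to exhibiting two concrete graph pairs and computing four realization numbers.

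First I would draw, from the exhaustive experiments on minimally 3-rigid graphs with at most nine vertices (the largest size at which the probabilistic Gröbner-basis approach stays efficient), one spider-splitting pair attaining the smallest observed factor and one attaining the largest. For each certificate I would verify that both $G$ and $G'$ are minimally 3-rigid, that $G'$ really is a spider split of $G$ in the sense of \Cref{fig:splitting3d}, and then compute $\lamIII{G}$ and $\lamIII{G'}$ by counting the solutions of the edge-length system for random lengths modulo a large prime, using \msolve\ for the leading-monomial computation together with the pre- and postprocessing from \cite{RigiComp}. Reading off the ratios then yields $\lamIII{G'}/\lamIII{G}=2$ for the minimal pair and $\lamIII{G'}/\lamIII{G}=36$ for the maximal pair; the encodings are collected in \Cref{sec:factor-cert}.

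The main obstacle is not structural but one of confidence in the counts: because the edge lengths are specialized at random over a finite field, a non-generic choice could in principle cause an undercount of either realization number. To guard against this I would repeat every computation at least ten times with independent random data, as is done throughout the paper, so that the reported values---and therefore the ratios $2$ and $36$---are reliable. I would emphasize that the proposition only records the observed extremes: it does not claim that $k_1=2$ or $k_2=36$ are the true optimal universal constants, only that any valid constants must satisfy $k_1\le 2$ and $k_2\ge 36$.
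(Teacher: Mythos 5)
Your proposal is correct and matches the paper's approach exactly: the paper proves this proposition by exhibiting the certificate pairs in \Cref{tab:3dLaman_Increase_Ssplit} (a spider split with ratio $2$, e.g.\ $511\to 7679$ with $4\to 8$, and one with ratio $36$, namely $13236016767\to 2004558244619$ with $64\to 2304$), with the realization numbers computed by the same probabilistic \msolve-based method repeated for confidence. Nothing is missing.
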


\section{Conclusion}
We have improved the bounds for the maximal number of realizations for a given number of vertices in the plane, on the sphere and in higher dimensional spaces.
Additionally we have given bounds on the minimal such number for higher dimensions.
Since a combinatorial algorithm for higher dimensions is missing, the computations are done probabilistically using Gröbner bases with random edge lengths.
All the values presented in this paper have been computed several times to gain some level of trust.
Still it is important to gain further combinatorial knowledge of minimally rigid graphs in higher dimensions.

The results we obtained from the computations leave a basis for many conjectures on the realization numbers and the influence of construction steps on them.
These shall be subject to further research.

\addcontentsline{toc}{section}{Acknowledgments}
\section*{Acknowledgments}
G.\ Grasegger was supported by the Austrian Science Fund (FWF): 10.55776/P31888.

\appendix
\newpage
\section{Appendix --- Graph Encodings}\label{appendix:encoding}
In this section we present details on the graph encoding used in this paper and in \cite{LowerBounds,ZenodoData,ZenodoAlg}.

We encode a graph by the integer that is obtained in the following way:
\begin{itemize}
	\item flatten the upper right triangle of its adjacency matrix and
	\item interpret this binary sequence as an integer
\end{itemize}
Note that diagonal entries of the adjacency matrix are always zero since we only allow simple graphs without loops.
Hence, we consider only entries above the main diagonal.
\begin{center}
	\begin{tikzpicture}[scale=1.5]
		\begin{scope}
		\node[vertex] (a) at (0,0) {};
		\node[vertex] (b) at (1,0) {};
		\node[vertex] (c) at (0.5,0.866025) {};
		\draw[edge] (a)edge(b) (a)edge(c) (b)edge(c);
		\draw[dconstrule] (1.5,0.433) -- (2.5,0.433);
		\draw[dconstrule] (4.75,0.433) -- (5.75,0.433);
		\node at (3.7,0.433) {
			$
			\begin{pmatrix}
				0 & 1 & 1 \\
				1 & 0 & 1 \\
				1 & 1 & 0
			\end{pmatrix}
			$
		};
		\node at (6.75,0.433) {$(111)_2=7$};
	\end{scope}
	\begin{scope}[yshift=-1.75cm]
		\node[vertex,label={[labelsty]180:1}] (a) at (-0.5,0) {};
		\node[vertex,label={[labelsty]0:3}] (b) at (0.5,0) {};
		\node[vertex,label={[labelsty]180:4}] (c) at (0,0.866025) {};
		\node[vertex,label={[labelsty]0:2}] (d) at (1,0.866025) {};
		\draw[edge] (a)edge(b) (a)edge(c) (b)edge(c) (b)edge(d) (c)edge(d);
		\draw[dconstrule] (1.5,0.433) -- (2.5,0.433);
		\draw[dconstrule] (4.75,0.433) -- (5.75,0.433);
		\node at (3.7,0.433) {
			$
			\begin{pmatrix}
				0 & 0 & 1 & 1\\
				0 & 0 & 1 & 1\\
				1 & 1 & 0 & 1\\
				1 & 1 & 1 & 0
			\end{pmatrix}
			$
		};
		\node at (6.75,0.433) {$(011111)_2=31$};
		\end{scope}
	\end{tikzpicture}
\end{center}
With this encoding isomorphic graphs might be represented by different numbers.

\subsection{Small graphs}
\begin{figure}[ht]
	\centering
	\begin{tabular}{ccccccc}
		\begin{tikzpicture}[scale=0.5]
			\node[vertex] (1) at (1., 0.) {};
			\node[vertex] (2) at (-1., 0.) {};
			\draw[edge] (1)edge(2);
		\end{tikzpicture}
		&
		\begin{tikzpicture}[scale=0.5]
			\node[vertex] (1) at (210:1) {};
			\node[vertex] (2) at (0,1) {};
			\node[vertex] (3) at (330:1) {};
			\draw[edge] (1)edge(2) (1)edge(3) (2)edge(3);
		\end{tikzpicture}
		&
		\begin{tikzpicture}[]
			\node[vertex] (1) at (0.9, 0) {};
			\node[vertex] (2) at (-0.9, 0) {};
			\node[vertex] (3) at (0, -0.4) {};
			\node[vertex] (4) at (0, 0.4) {};
			\draw[edge] (1)edge(3) (1)edge(4) (2)edge(3) (2)edge(4) (3)edge(4);
		\end{tikzpicture}
		&
		\begin{tikzpicture}[]
			\node[vertex] (1) at (0, -0.6) {};
			\node[vertex] (2) at (0, 0.6) {};
			\node[vertex] (3) at (1.7, 0) {};
			\node[vertex] (4) at (0.7, -0.3) {};
			\node[vertex] (5) at (0.7, 0.3) {};
			\draw[edge] (1)edge(4) (1)edge(5) (2)edge(4) (2)edge(5) (3)edge(4) (3)edge(5) (4)edge(5);
		\end{tikzpicture}
		&
		\begin{tikzpicture}[]
			\node[vertex] (1) at (1, 0.7) {};
			\node[vertex] (2) at (-1, 0.7) {};
			\node[vertex] (3) at (-0.5, 0) {};
			\node[vertex] (4) at (0.5, 0) {};
			\node[vertex] (5) at (0, 0.7) {};
			\draw[edge] (1)edge(4) (1)edge(5) (2)edge(3) (2)edge(5) (3)edge(4) (3)edge(5) (4)edge(5);
		\end{tikzpicture}
		&
		\begin{tikzpicture}[]
			\node[vertex] (1) at (0, 0) {};
			\node[vertex] (2) at (1.7, 0.4) {};
			\node[vertex] (3) at (1.7, -0.4) {};
			\node[vertex] (4) at (0.9, -0.4) {};
			\node[vertex] (5) at (0.9, 0.4) {};
			\draw[edge] (1)edge(4) (1)edge(5) (2)edge(3) (2)edge(4) (2)edge(5) (3)edge(4) (3)edge(5);
		\end{tikzpicture}
		&
		\begin{tikzpicture}[]
			\node[vertex] (1) at (0, 0.5) {};
			\node[vertex] (2) at (1.64, 0) {};
			\node[vertex] (3) at (1.14, -0.5) {};
			\node[vertex] (4) at (0, -0.5) {};
			\node[vertex] (5) at (0.5, 0) {};
			\node[vertex] (6) at (1.14, 0.5) {};
			\draw[edge] (1)edge(4) (1)edge(5) (1)edge(6) (2)edge(3) (2)edge(5) (2)edge(6) (3)edge(4) (3)edge(6) (4)edge(5);
		\end{tikzpicture}\\
		1 & 7 & 31 & 223 & 239 & 254 & 7916
	\end{tabular}
	\caption{Small minimally 2-rigid graphs and their integer representations}
	\label{fig:smallm2rinteger}
\end{figure}

\begin{figure}[ht]
	\centering
	\begin{tabular}{ccccccc}
		\begin{tikzpicture}[scale=0.5]
			\node[vertex] (1) at (210:1) {};
			\node[vertex] (2) at (0,1) {};
			\node[vertex] (3) at (330:1) {};
			\draw[edge] (1)edge(2) (1)edge(3) (2)edge(3);
		\end{tikzpicture}
		&
		\begin{tikzpicture}[scale=0.75]
			\node[vertex] (1) at (-1, 0) {};
			\node[vertex] (2) at (0, 1) {};
			\node[vertex] (3) at (1, 0) {};
			\node[vertex] (4) at (-0.35, -0.65) {};
			\draw[edge] (1)edge(2) (1)edge(3) (1)edge(4) (2)edge(3) (2)edge(4) (3)edge(4);
		\end{tikzpicture}
		&
		\begin{tikzpicture}[]
			\node[vertex] (1) at (1.8, 0) {};
			\node[vertex] (2) at (0., 0) {};
			\node[vertex] (3) at (0.8, 0.6) {};
			\node[vertex] (4) at (1.1, 0) {};
			\node[vertex] (5) at (0.8, -0.6) {};
			\draw[edge] (1)edge(3) (1)edge(4) (1)edge(5) (2)edge(3) (2)edge(4) (2)edge(5) (3)edge(4) (3)edge(5) (4)edge(5);
		\end{tikzpicture}\\
		7 & 63 & 511
	\end{tabular}
	\caption{Small minimally 3-rigid graphs and their integer representations}
	\label{fig:smallm3rinteger}
\end{figure}

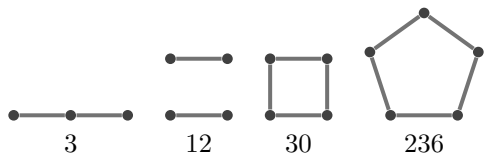
\begin{figure}[ht]
	\centering
	\begin{tabular}{ccccccc}
		\begin{tikzpicture}[scale=0.75]
			\node[vertex] (1) at (0,0) {};
			\node[vertex] (2) at (1,0) {};
			\node[vertex] (3) at (2,0) {};
			\draw[edge] (1)edge(2)(2)edge(3);
		\end{tikzpicture}
		&
		\begin{tikzpicture}[scale=0.75]
			\node[vertex] (1) at (0, 0) {};
			\node[vertex] (2) at (1, 0) {};
			\node[vertex] (3) at (1, 1) {};
			\node[vertex] (4) at (0, 1) {};
			\draw[edge] (1)edge(2) (3)edge(4);
		\end{tikzpicture}
		&
		\begin{tikzpicture}[scale=0.75]
			\node[vertex] (1) at (0, 0) {};
			\node[vertex] (2) at (1, 0) {};
			\node[vertex] (3) at (1, 1) {};
			\node[vertex] (4) at (0, 1) {};
			\draw[edge] (1)edge(2) (2)edge(3) (3)edge(4) (4)edge(1);
		\end{tikzpicture}
		&
		\begin{tikzpicture}[scale=0.75]
			\node[vertex] (1) at (90:1) {};
			\node[vertex] (2) at (90+72:1) {};
			\node[vertex] (3) at (90+2*72:1) {};
			\node[vertex] (4) at (90+3*72:1) {};
			\node[vertex] (5) at (90+4*72:1) {};
			\draw[edge] (1)edge(2) (2)edge(3) (3)edge(4) (4)edge(5) (5)edge(1);
		\end{tikzpicture}\\
		3 & 12 & 30 & 236
	\end{tabular}
	\caption{Small graphs and their integer representations}
	\label{fig:smallinteger}
\end{figure}

\subsection{Encodings for special graphs}\label{sec:enc:sepcial}
In this section we collect encodings for the graphs that were used to obtain the results.

Encodings for 2-rigid graphs on less than 13 vertices important for realization bounds in the plane can be found in \cite{LowerBounds}.
Hence, we omit them here and focus on graph with at least 13 vertices.
Similarly for 3-rigid graphs on less than 11 vertices.

\begin{table}[H]
	\centering\scriptsize
	\begin{tabular}{rrr}
		\toprule
		n  & graph encoding & $\lamIIfunc$ \\\midrule
		13 & 2731597771584836257824 & 15536 \\
		14 & 3932631430916370534240769 & 42780 \\
		15 & 94091005932357252120217796609 & 112752 \\
		16 & 892527555716690691964688718172672 & 312636 \\
		17 & 33232241523797605273285475596937609216 & 877960 \\
		18 & 609860480316548750422025574107604147273728 & 2414388 \\
		19 & 24856974395124526078390940692717703630776284260353 & 6505210 \\
		20 & 969788454592744849800428797243356986925558902858752256 & 17971016 \\
		21 & 12968494532131258973493494663506962165005447310533060506813712 & 49630180 \\
		22 & 33713110476986119025028298068249028967893417998630030702195417023572 & 138913994 \\
		23 & 71557306413135081116091456327584755953673370689478276983012212630869839968 & 367574230 \\\bottomrule
	\end{tabular}
	\caption{Graphs with high number of realizations used for getting the first column of \Cref{tab:bounds}.}
	\label{tab:enc:1-fan}
\end{table}

\begin{table}[H]
	\centering\scriptsize
	\begin{tabular}{rrr}
		\toprule
		n  & graph encoding & $\lamIIfunc$ \\\midrule
		13 & 517844367551685511200 & 15268 \\
		14 & 3940891540403122035892993 & 40096 \\
		15 & 61904862714746864303321923585 & 109496 \\
		16 & 2028424292758336805279710135194112 & 294964 \\
		17 & 33232403864928934634066282513749360672 & 819988 \\
		18 & 1219637218659582589200662236858040990044672 & 2247900 \\
		19 & 9499941923504027159504793171824935136863824564224 & 6108404 \\
		20 & 4645400869097374823966494459590862173654520880913719617 & 16771820 \\
		21 & 841159220643262909846596018548625207381025120635924158386256 & 46479708 \\
		22 & 26992863437232946557425749373646012807725030614773535644519493534898 & 129305872 \\
		23 & 7516002197842324262732702438237139053241300938418728228570663606823288844 & 356386152 \\
		24 & 1186886590055951433948405571819937144644542807550453790223319248355507845603841 & 928830292 \\\bottomrule
	\end{tabular}
	\caption{Graphs with high number of realizations and a triangle subgraph used for getting \Cref{tab:bounds}.}
	\label{tab:enc:7-fan}
\end{table}

\begin{table}[H]
	\centering\scriptsize
	\begin{tabular}{rrr}
		\toprule
		n  & graph encoding & $\lamIIfunc$ \\\midrule
		13 & 1919470438696836485632 & 13216 \\
		14 & 106387392391517896392056867 & 32984 \\
		15 & 101535565889081346135698980865 & 87952 \\
		16 & 1054779394035008997129070783324160 & 236752 \\
		17 & 2043029311514306583969055346820823801857 & 636584 \\
		18 & 15333150053920178852347345525835627097559563 & 1778872 \\
		19 & 2375165956056111156778838587761972160510430298113 & 4917616 \\
		20 & 299319965592281883846087342823084798786086393695174944 & 13461568 \\
		21 & 175762302721380444691882816903810933661859536729624543496459 & 36814256 \\
		22 & 13743255016879215026359790585006300077192451062341070022867148867723 & 102580944 \\
		23 & 14300418759653373184331761931428003983606606927352291848347223111275972680 & 283111704 \\
		24 & 477063471291857555444014842231845351516224351757402206829357538163373848135729228 & 786712432 \\\bottomrule
	\end{tabular}
	\caption{Graphs with high number of realizations and a four vertex minimally rigid subgraph used for getting \Cref{tab:bounds}.}
	\label{tab:enc:31-fan}
\end{table}

\begin{table}[H]
	\centering\scriptsize
	\begin{tabular}{rrr}
		\toprule
		n  & graph encoding & $\lamIIfunc$ \\\midrule
		13 & 12995965739298874613921 & 9728 \\
		14 & 3930558837259231061885444 & 26432 \\
		15 & 103994519603634743595901386912 & 66800 \\
		16 & 1014368078774038833854590442577956 & 184128 \\
		17 & 17285846084153567892608510949622751232 & 489776 \\
		18 & 2439997381536868085951326631278682611662860 & 1336576 \\
		19 & 570905523907304499408862045351699438966424994337 & 3760640 \\
		20 & 1730078881661256116680977603720159619508975088476226051 & 10411888 \\
		21 & 59632693970547596625962262828713950408378399537682010685504 & 28709504 \\
		22 & 1750823378745286454968405754173395962173256608787492181501934714913 & 77935744 \\
		23 & 179446670409862114796636902481179502800869665176711932792232782596415572 & 220411552 \\
		24 & 1621117988222861364696506244132890911843704707085263796691486054849687756801808 & 611930960 \\\bottomrule
	\end{tabular}
	\caption{Graphs with high number of realizations and the five vertex minimally rigid subgraph 254 used for getting \Cref{tab:bounds}.}
	\label{tab:enc:254-fan}
\end{table}

\begin{table}[H]
	\centering\scriptsize
	\begin{tabular}{rrr}
		\toprule
		n  & graph encoding & $\lamIIfunc$ \\\midrule
		13 & 152150553443130661671424 & 10944 \\
		14 & 20259183535670335770018976 & 29184 \\
		15 & 32192788694337816251917943300 & 79296 \\
		16 & 1014137821927435072109531763867776 & 200400 \\
		17 & 101022220227035701532226246627656466962 & 552384 \\
		18 & 103141290816217564540696429003780764122218528 & 1469328 \\
		19 & 5115257647184278818181495355725235437833809891841 & 4009728 \\
		20 & 7087793566411652377916979289267325306059305780156383250 & 11281920 \\
		21 & 10043363267502127349251608648315257503005775992801180904742913 & 31235664 \\
		22 & 3527962398608386801203237161357185274033369718060390247382088159424 & 86128512 \\
		23 & 3699337307276164664254297200614860849733905113696802834819080497156639233 & 233807232 \\
		24 & 60269783027825379385089984094477284578849409328099245478975316545880239834007628 & 661234656 \\\bottomrule
	\end{tabular}
	\caption{Graphs with high number of realizations and a three-prism subgraph 7916 used for getting \Cref{tab:bounds}.}
	\label{tab:enc:7916-fan}
\end{table}

\begin{table}[ht]
	\centering\scriptsize
	\begin{tabular}{rr}
		\toprule
		n  & graph encodings \\\midrule
		10 & 4778440734593, 4847160401729, 1315755596577 \\
		11 & 18226779293308419, 18226916732259843 \\
		12 & 252695476130038944 \\
		13 & 6128220462188632473600, 14444026969064381092352, 14444027004180033704448, 76113284109793682046976\\\bottomrule
	\end{tabular}
	\caption{Graph encodings for \Cref{fig:maxsphere}.}
	\label{tab:enc-maxsphere}
\end{table}

\begin{table}[H]
	\centering\scriptsize
	\begin{tabular}{rrr}
		\toprule
		n  & graph encoding & $\lamSIIfunc$ \\\midrule
		6 & 7916 & 32 \\
		7 & 112525 & 64 \\
		8 & 170957470 & 192 \\
		9 & 2993854888 & 576 \\
		10 & 1315755596577 & 1536 \\
		11 & 18226779293308419 & 4352 \\
		12 & 252695476130038944 & 12288 \\
		13 & 14444026969064381092352 & 34816 \\
		14 & 24487910449801970454110476 & 98304 \\
		15 & 188197112811157555364613865540 & 274432 \\
		16 & 3083362077516059701951221258133504 & 815104 \\
		17 & 9307395696008843988318577033335947264 & 2195456 \\\bottomrule
	\end{tabular}
	\caption{Graphs with high number of spherical realizations used for getting the first column of \Cref{tab:spherebounds}. All of them have a triangle subgraph and are therefore also used for getting column two.}
	\label{tab:Senc:1-fan}
\end{table}

\begin{table}[H]
	\centering\scriptsize
	\begin{tabular}{rrr}
		\toprule
		n  & graph encoding & $\lamSIIfunc$ \\\midrule
		6 & 3326 & 16 \\
		7 & 1256267 & 64 \\
		8 & 104400062 & 128 \\
		9 & 11987422577 & 512 \\
		10 & 4778713554625 & 1280 \\
		11 & 1236508778848775 & 3584 \\
		12 & 1801944545326387724 & 10240 \\
		13 & 153699228822191169081856 & 28672 \\
		14 & 15720174686046767213974028 & 83968 \\
		15 & 277343309388923844643716079653 & 233472 \\
		16 & 4543606606848062640178932869234706 & 630784 \\
		17 & 74437903980450196378912688547861479460 & 1875968 \\\bottomrule
	\end{tabular}
	\caption{Graphs with high number of spherical realizations and a four vertex minimally rigid subgraph used for getting \Cref{tab:spherebounds}.}
	\label{tab:Senc:31-fan}
\end{table}

\begin{table}[H]
	\centering\scriptsize
	\begin{tabular}{rrr}
		\toprule
		n  & graph encoding & $\lamSIIfunc$ \\\midrule
		6 & 3326 & 16 \\
		7 & 101630 & 32 \\
		8 & 117884055 & 128 \\
		9 & 1008905132 & 256 \\
		10 & 220302198846 & 1024 \\
		11 & 111370751706302 & 2560 \\
		12 & 270850004139176705 & 8192 \\
		13 & 1041106309634028363937 & 20480 \\
		14 & 11192470356822440426349062 & 61440 \\
		15 & 52006190731483567881425789216 & 180224 \\
		16 & 1055010897268843451626836271841792 & 475136 \\
		17 & 18610043923532523055425244310227943712 & 1376256 \\\bottomrule
	\end{tabular}
	\caption{Graphs with high number of spherical realizations and a five vertex minimally rigid subgraph 254 used for getting \Cref{tab:spherebounds}.}
	\label{tab:Senc:254-fan}
\end{table}

\begin{table}[H]
	\centering\scriptsize
	\begin{tabular}{rrr}
		\toprule
		n  & graph encoding & $\lamSIIfunc$ \\\midrule
		7 & 112525 & 64 \\
		8 & 10821356 & 128 \\
		9 & 43562960283 & 512 \\
		10 & 10059930403935 & 1024 \\
		11 & 1778189899030543 & 4096 \\
		12 & 1010360038673469964 & 10240 \\
		13 & 152150553443130661683232 & 32768 \\
		14 & 12699920118400119757561920 & 81920 \\
		15 & 1290592463576136176277725405703 & 262144 \\
		16 & 1135918983098992950261555449569796 & 720896 \\
		17 & 186094759147859265798923586845205397536 & 1900544 \\\bottomrule
	\end{tabular}
	\caption{Graphs with high number of spherical realizations and a six vertex minimally rigid subgraph 7916 used for getting \Cref{tab:spherebounds}.}
	\label{tab:Senc:7916-fan}
\end{table}

\begin{table}[H]
	\centering\scriptsize
	\begin{tabular}[t]{rrr}
		\toprule
		fan     & graph encoding   & $\lamIIIfunc$ \\\midrule
		7-fan   & 9264031572838635 & 9728 \\
		63-fan  & 2027871741807451 & 6400 \\
		511-fan & 1611981460276203 & 3712 \\\bottomrule
	\end{tabular}
	\qquad
	\begin{tabular}[t]{rrr}
		\toprule
		fan     & graph encoding   & $\lamIIIfunc$ \\\midrule
		7-fan   & 1151802456431434509 & 54272 \\
		63-fan  & 1132848041084674134 & 28672 \\
		511-fan & 1150729667545415175 & 16384 \\\bottomrule
	\end{tabular}
	\caption{Minimally 3-rigid graphs with 11 and 12 vertices and a high number of realizations used for getting \Cref{tab:bounds3d}.}
	\label{tab:enc:fan3d}
\end{table}

\section{Appendix --- Certificate graphs}\label{sec:factor-cert}
In this section we collect some certificates for the statements in \Cref{sec:factors}.
Note that these certificates are not necessarily unique.

\begin{table}[ht]
	\centering\scriptsize
	\begin{tabular}{lrrrrr}
		\toprule
		$|V|$ & $G=(V,E)$       & $\lamII{G}$ & $G'$            & $\lamII{G'}$ & Factor\\
		\midrule
		5  & 254                & 8    & 4011                   & 16   & 2 \\
		6  & 3326               & 16   & 167646                 & 32   & 2 \\
		7  & 120478             & 48   & 11357278               & 96   & 2 \\
		8  & 7510520            & 88   & 2462293336             & 176  & 2 \\
		9  & 833010561          & 208  & 2274775308581          & 416  & 2 \\
		10 & 207528715668       & 432  & 9042728074405044       & 864  & 2 \\
		11 & 107484831387142    & 512  & 504896642393621664     & 1024 & 2 \\
		12 & 109648827346584766 & 1024 & 1735798858112968179744 & 2048 & 2 \\
		\bottomrule
	\end{tabular}
	\begin{tabular}{lrrrrr}
		\toprule
		$|V|$ & $G=(V,E)$       & $\lamII{G}$ & $G'$             & $\lamII{G'}$ & Factor\\
		\midrule
		5  & 254                & 8    & 7916                    & 24   & 3.00 \\
		6  & 3934               & 16   & 1269995                 & 56   & 3.50 \\
		7  & 186013             & 32   & 170989214               & 136  & 4.25 \\
		8  & 11357293           & 64   & 5724735646              & 312  & 4.88 \\
		9  & 2621607781         & 128  & 5646831424844           & 736  & 5.75 \\
		10 & 1191880862997      & 256  & 19212043457692912       & 1728 & 6.75 \\
		11 & 607773613166125    & 512  & 23365097235260968172    & 4176 & 8.16 \\
		12 & 619316832844764209 & 1024 & 13999944567768887722001 & 9984 & 9.75 \\
		\bottomrule
	\end{tabular}
	\caption{Minimal and maximal increase of $\lamIIfunc$ by a 1-extension step of type E1b based on graphs with at most twelve vertices.}
	\label{tab:Laman_Increase_H2b}
\end{table}

\begin{table}[ht]
	\centering\scriptsize
	\begin{tabular}{lrrrrr}
		\toprule
		$|V|$ & $G=(V,E)$         & $\lamII{G}$ & $G'$            & $\lamII{G'}$ & Factor\\
		\midrule
		5  & 254                  & 8    & 7672                   & 16   & 2.00 \\
		6  & 7916                 & 24   & 481867                 & 44   & 1.83 \\
		7  & 1269995              & 56   & 31004235               & 96   & 1.71 \\
		8  & 6739377              & 112  & 1651611000             & 192  & 1.71 \\
		9  & 1361485524           & 224  & 415060176074           & 384  & 1.71 \\
		10 & 206991714068         & 448  & 634567474384650        & 768  & 1.71 \\
		11 & 18120782212031500    & 896  & 1371065508440933536    & 1536 & 1.71 \\
		12 & 41541803727340775650 & 2688 & 4702767505345224900620 & 4608 & 1.71 \\
		\bottomrule
	\end{tabular}
	\begin{tabular}{lrrrrr}
		\toprule
		$|V|$ & $G=(V,E)$        & $\lamII{G}$ & $G'$            & $\lamII{G'}$ & Factor\\
		\midrule
		5  & 254                 & 8    & 7672                   & 16    &  2.00 \\
		6  & 4011                & 16   & 1269995                & 56    &  3.50 \\
		7  & 190686              & 32   & 170989214              & 136   &  4.25 \\
		8  & 20042142            & 64   & 11177989553            & 344   &  5.38 \\
		9  & 2794620126          & 128  & 1813573113164          & 808   &  6.31 \\
		10 & 1248809223262       & 256  & 2960334732174949       & 1976  &  7.72 \\
		11 & 1710909647295913    & 512  & 15006592507478215906   & 4816  &  9.41 \\
		12 & 4649551155295838770 & 1024 & 8564720917032382554112 & 11872 & 11.59 \\
		\bottomrule
	\end{tabular}
	\caption{Minimal and maximal increase of $\lamIIfunc$ by a 1-extension of type E1c based on graphs with at most twelve vertices.}
	\label{tab:Laman_Increase_H2c}
\end{table}

\begin{table}[ht]
	\centering\scriptsize
	\begin{tabular}{lrrrrr}
		\toprule
		$|V|$ & $G=(V,E)$       & $\lamII{G}$ & $G'$            & $\lamII{G'}$ & Factor\\
		\midrule
		5  & 239                & 8    & 5791                   & 16   & 2 \\
		6  & 5791               & 16   & 567671                 & 32   & 2 \\
		7  & 567671             & 32   & 38835902               & 64   & 2 \\
		8  & 35982711           & 64   & 6513859183             & 128  & 2 \\
		9  & 19396904311        & 128  & 9354758752151          & 256  & 2 \\
		10 & 207528715668       & 432  & 178265086659764        & 864  & 2 \\
		11 & 18719774180030477  & 1728 & 505203673902716576     & 3456 & 2 \\
		12 & 252589376453374080 & 4088 & 1110700780390419841024 & 8176 & 2 \\
		\bottomrule
	\end{tabular}
	\begin{tabular}{lrrrrr}
		\toprule
		$|V|$ & $G=(V,E)$        & $\lamII{G}$ & $G'$              & $\lamII{G'}$ & Factor\\
		\midrule
		5  & 254                 & 8    & 7916                     & 24    &  3.00 \\
		6  & 3934                & 16   & 1269995                  & 56    &  3.50 \\
		7  & 186013              & 32   & 170989214                & 136   &  4.25 \\
		8  & 11636216            & 64   & 15177289073              & 336   &  5.25 \\
		9  & 2463307347          & 128  & 18795858309901           & 808   &  6.31 \\
		10 & 1172082332811       & 256  & 6914422794743947         & 1920  &  7.50 \\
		11 & 601024461194379     & 512  & 3749679080285441171      & 4744  &  9.27 \\
		12 & 1189938522902667411 & 1024 & 160027824476021084790788 & 11616 & 11.34 \\
		\bottomrule
	\end{tabular}
	\caption{Minimal and maximal increase of $\lamIIfunc$ by a vertex split based on graphs with at most twelve vertices.}
	\label{tab:Laman_Increase_Vsplit}
\end{table}

\begin{table}[ht]
	\centering\scriptsize
	\begin{tabular}{lrrrrr}
		\toprule
		$|V|$ & $G=(V,E)$         & $\lamII{G}$ & $G'$              & $\lamII{G'}$ & Factor\\
		\midrule
		5  & 254                  & 8    & 3326                     & 16   & 2.00 \\
		6  & 7672                 & 16   & 127198                   & 32   & 2.00 \\
		7  & 400857               & 32   & 6405034                  & 64   & 2.00 \\
		8  & 211042527            & 96   & 864467169                & 192  & 2.00 \\
		9  & 1016348895           & 192  & 207174779103             & 384  & 2.00 \\
		10 & 12146438253357       & 624  & 18124496647243558        & 1232 & 1.97 \\
		11 & 22817628108265694    & 1456 & 46279135492693888222     & 2864 & 1.97 \\
		12 & 43829316383861314782 & 3248 & 115519230969917969863816 & 6384 & 1.97 \\
		\bottomrule
	\end{tabular}
	\begin{tabular}{lrrrrr}
		\toprule
		$|V|$ & $G=(V,E)$        & $\lamII{G}$ & $G'$            & $\lamII{G'}$ & Factor\\
		\midrule
		5  & 239                 & 8    & 7916                   & 24    &  3.00 \\
		6  & 3934                & 16   & 1269995                & 56    &  3.50 \\
		7  & 167773              & 32   & 170989214              & 136   &  4.25 \\
		8  & 36738719            & 64   & 11177989553            & 344   &  5.38 \\
		9  & 17458998761         & 128  & 4778432477057          & 840   &  6.56 \\
		10 & 378251097553        & 256  & 18367623658392579      & 2168  &  8.47 \\
		11 & 9043558346904627    & 512  & 938475301372209920     & 5492  & 10.73 \\
		12 & 9277452071444257955 & 1024 & 9159515851062901770240 & 14214 & 13.88 \\
		\bottomrule
	\end{tabular}
	\caption{Minimal and maximal increase of $\lamIIfunc$ by a spider split based on graphs with at most ten vertices.}
	\label{tab:Laman_Increase_Ssplit}
\end{table}

\begin{table}[ht]
	\centering\scriptsize
	\begin{tabular}{lrrrrr}
		\toprule
		$|V|$ & $G=(V,E)$       & $\lamSII{G}$ & $G'$           & $\lamSII{G'}$ & Factor\\
		\midrule
		5  & 254                & 8    & 4011                   & 16   & 2 \\
		6  & 3326               & 16   & 167646                 & 32   & 2 \\
		7  & 120478             & 64   & 11357278               & 128  & 2 \\
		8  & 7510520            & 96   & 2462293336             & 192  & 2 \\
		9  & 833010561          & 224  & 2274775308581          & 448  & 2 \\
		10 & 207528715668       & 512  & 9042728074405044       & 1024 & 2 \\
		11 & 18719774448464909  & 1024 & 504902440985347472     & 2048 & 2 \\
		12 & 109642230153347189 & 3072 & 1113237338078023041024 & 6144 & 2 \\
		\bottomrule
	\end{tabular}
	\begin{tabular}{lrrrrr}
		\toprule
		$|V|$ & $G=(V,E)$        & $\lamSII{G}$ & $G'$            & $\lamSII{G'}$ & Factor\\
		\midrule
		5  & 254                 & 8    & 7916                    & 32    &  4 \\
		6  & 3326                & 16   & 120478                  & 64    &  4 \\
		7  & 183548              & 32   & 170957470               & 192   &  6 \\
		8  & 20093843            & 64   & 38945331569             & 512   &  8 \\
		9  & 2563805029          & 128  & 1796952196188           & 1280  & 10 \\
		10 & 2275289926222       & 256  & 22677431911835790       & 3072  & 12 \\
		11 & 1163020275982483    & 512  & 20897195286320951041    & 8192  & 16 \\
		12 & 1189954226918428849 & 1024 & 18635940629762920218656 & 21504 & 21 \\
		\bottomrule
	\end{tabular}
	\caption{Minimal and maximal increase of $\lamSIIfunc$ by a 1-extension of type E1b based on graphs with at most twelve vertices.}
	\label{tab:SLaman_Increase_H2b}
\end{table}

\begin{table}[ht]
	\centering\scriptsize
	\begin{tabular}{lrrrrr}
		\toprule
		$|V|$ & $G=(V,E)$       & $\lamSII{G}$ & $G'$           & $\lamSII{G'}$ & Factor\\
		\midrule
		5  & 254                & 8    & 7672                   & 16   & 2.0 \\
		6  & 7916               & 32   & 481867                 & 48   & 1.5 \\
		7  & 120478             & 64   & 7122342                & 96   & 1.5 \\
		8  & 6739377            & 128  & 1651611000             & 192  & 1.5 \\
		9  & 1361485524         & 256  & 415060176074           & 384  & 1.5 \\
		10 & 968052098124       & 1024 & 1764064806086260       & 1536 & 1.5 \\
		11 & 775070073152624    & 2048 & 507877685090371348     & 3072 & 1.5 \\
		12 & 254886745716355856 & 6144 & 2142833417622946590848 & 9216 & 1.5 \\
		\bottomrule
	\end{tabular}
	\begin{tabular}{lrrrrr}
		\toprule
		$|V|$ & $G=(V,E)$        & $\lamSII{G}$ & $G'$            & $\lamSII{G'}$ & Factor\\
		\midrule
		5  & 254                 & 8    & 7672                    & 16    &  2 \\
		6  & 4011                & 16   & 1269995                 & 64    &  4 \\
		7  & 190686              & 32   & 170989214               & 192   &  6 \\
		8  & 20042142            & 64   & 11177989553             & 512   &  8 \\
		9  & 4710608114          & 128  & 6702858835404           & 1408  & 11 \\
		10 & 1240861945058       & 256  & 496052904236256         & 3584  & 14 \\
		11 & 2290021342225298    & 512  & 1801933210823741964     & 10240 & 20 \\
		12 & 1297917820948026633 & 1024 & 15835665609699362931712 & 26624 & 26 \\
		\bottomrule
	\end{tabular}
	\caption{Minimal and maximal increase of $\lamSIIfunc$ by a 1-extension of type E1c based on graphs with at most twelve vertices.}
	\label{tab:SLaman_Increase_H2c}
\end{table}

\begin{table}[ht]
	\centering\scriptsize
	\begin{tabular}{lrrrrr}
		\toprule
		$|V|$ & $G=(V,E)$       & $\lamSII{G}$ & $G'$           & $\lamSII{G'}$ & Factor\\
		\midrule
		5  & 239                & 8    & 5791                   & 16    & 2 \\
		6  & 7916               & 32   & 127575                 & 64    & 2 \\
		7  & 112525             & 64   & 11881039               & 128   & 2 \\
		8  & 12885740           & 128  & 2720478547             & 256   & 2 \\
		9  & 833010561          & 224  & 1172019000513          & 448   & 2 \\
		10 & 207528715668       & 512  & 178265086659764        & 1024  & 2 \\
		11 & 119028075245345    & 2048 & 3531332902197577092    & 4096  & 2 \\
		12 & 252589376453374080 & 6912 & 1110700780390419841024 & 13824 & 2 \\
		\bottomrule
	\end{tabular}
	\begin{tabular}{lrrrrr}
		\toprule
		$|V|$ & $G=(V,E)$       & $\lamSII{G}$ & $G'$             & $\lamSII{G'}$ & Factor\\
		\midrule
		5  & 254                & 8    & 7916                     & 32    &  4 \\
		6  & 3326               & 16   & 120478                   & 64    &  4 \\
		7  & 183548             & 32   & 170957470                & 192   &  6 \\
		8  & 20093843           & 64   & 38945331569              & 512   &  8 \\
		9  & 2456982772         & 128  & 1315755596577            & 1536  & 12 \\
		10 & 637026913490       & 256  & 1338811453900388         & 3584  & 14 \\
		11 & 602981484498725    & 512  & 9945923421951266565      & 10240 & 20 \\
		12 & 630609743901537668 & 1024 & 163667169794306279997956 & 28672 & 28 \\
		\bottomrule
	\end{tabular}
	\caption{Minimal and maximal increase of $\lamSIIfunc$ by a vertex split based on graphs with at most twelve vertices.}
	\label{tab:SLaman_Increase_Vsplit}
\end{table}

\begin{table}[ht]
	\centering\scriptsize
	\begin{tabular}{lrrrrr}
		\toprule
		$|V|$ & $G=(V,E)$       & $\lamSII{G}$ & $G'$           & $\lamSII{G'}$ & Factor\\
		\midrule
		5  & 254                & 8    & 3326                   & 16   & 2 \\
		6  & 7672               & 16   & 127198                 & 32   & 2 \\
		7  & 400857             & 32   & 6405034                & 64   & 2 \\
		8  & 211042527          & 128  & 864467169              & 256  & 2 \\
		9  & 833010561          & 224  & 206991440769           & 448  & 2 \\
		10 & 207528715668       & 512  & 176128877450420        & 1024 & 2 \\
		11 & 10181617802924402  & 1024 & 504902440985347472     & 2048 & 2 \\
		12 & 109073487874526211 & 1792 & 1107348008565084258304 & 3584 & 2 \\
		\bottomrule
	\end{tabular}
	\begin{tabular}{lrrrrr}
		\toprule
		$|V|$ & $G=(V,E)$        & $\lamSII{G}$ & $G'$             & $\lamSII{G'}$ & Factor\\
		\midrule
		5  & 239                 & 8    & 7916                     & 32    &  4.0 \\
		6  & 4011                & 16   & 1256267                  & 64    &  4.0 \\
		7  & 560509              & 32   & 170957470                & 192   &  6.0 \\
		8  & 10800350            & 64   & 2993854888               & 576   &  9.0 \\
		9  & 4570175595          & 128  & 6702858835404            & 1408  & 11.0 \\
		10 & 8865630770537       & 256  & 776962027132128          & 3584  & 14.0 \\
		11 & 317862752457964     & 512  & 9421847768666934496      & 10496 & 20.5 \\
		12 & 4648880717118489771 & 1024 & 160007275596729620152336 & 27648 & 27.0 \\
		\bottomrule
	\end{tabular}
	\caption{Minimal and maximal increase of $\lamSIIfunc$ by a spider split based on graphs with at most ten vertices.}
	\label{tab:SLaman_Increase_Ssplit}
\end{table}

\begin{table}[ht]
	\centering\scriptsize
	\begin{tabular}{lrrrrr}
		\toprule
		$|V|$ & $G=(V,E)$ & $\lamIII{G}$ & $G'$ & $\lamIII{G'}$ & Factor\\
		\midrule
		7  & 237311      & 16  & 23576519       & 32  & 2.0 \\
		8  & 252810751   & 64  & 27594639355    & 96  & 1.5 \\
		9  & 19208142832 & 160 & 31895307006974 & 224 & 1.4 \\
		\bottomrule
	\end{tabular}
	\begin{tabular}{lrrrrr}
		\toprule
		$|V|$ & $G=(V,E)$ & $\lamIII{G}$ & $G'$ & $\lamIII{G'}$ & Factor\\
		\midrule
		7  & 237563     & 16 & 251895291      & 40   &  2.5 \\
		8  & 44875511   & 32 & 60694390687    & 256  &  8.0 \\
		9  & 5271681403 & 64 & 28731623567711 & 1024 & 16.0 \\
		\bottomrule
	\end{tabular}
	\caption{Minimal and maximal increase of $\lamIIIfunc$ by a 1-extension step of type E1s1 based on graphs up to nine vertices.}
	\label{tab:3dLaman_Increase_H2s1}
\end{table}

\begin{table}[ht]
	\centering\scriptsize
	\begin{tabular}{lrrrrr}
		\toprule
		$|V|$ & $G=(V,E)$ & $\lamIII{G}$ & $G'$ & $\lamIII{G'}$ & Factor\\
		\midrule
		6  & 16350      & 16  & 981215       & 24  & 1.5 \\
		7  & 260603     & 32  & 15187945     & 48  & 1.5 \\
		8  & 14940667   & 64  & 1894236122   & 96  & 1.5 \\
		9  & 1893988859 & 128 & 482930573274 & 192 & 1.5 \\
		\bottomrule
	\end{tabular}
	\begin{tabular}{lrrrrr}
		\toprule
		$|V|$ & $G=(V,E)$ & $\lamIII{G}$ & $G'$ & $\lamIII{G'}$ & Factor\\
		\midrule
		6  & 8187       & 8  & 1497823        & 32   &  4 \\
		7  & 384943     & 16 & 49524604       & 128  &  8 \\
		8  & 40365871   & 32 & 41115180723    & 384  & 12 \\
		9  & 9995933947 & 64 & 12175735772830 & 1280 & 20 \\
		\bottomrule
	\end{tabular}
	\caption{Minimal and maximal increase of $\lamIIIfunc$ by a 1-extension of type E1s30 based on graphs up to nine vertices.}
	\label{tab:3dLaman_Increase_H2s30}
\end{table}

\begin{table}[ht]
	\centering\scriptsize
	\begin{tabular}{lrrrrr}
		\toprule
		$|V|$ & $G=(V,E)$ & $\lamIII{G}$ & $G'$ & $\lamIII{G'}$ & Factor\\
		\midrule
		8 & 15195641   & 32 & 4058496961    & 52 & 1.625 \\
		9 & 4177288696 & 96 & 1095248625600 & 84 & 0.875 \\
		\bottomrule
	\end{tabular}
	\begin{tabular}{lrrrrr}
		\toprule
		$|V|$ & $G=(V,E)$ & $\lamIII{G}$ & $G'$ & $\lamIII{G'}$ & Factor\\
		\midrule
		8 & 23322107   & 32 & 1911013321    &  80 &  2.5 \\
		9 & 3114753523 & 64 & 2068893654067 & 896 & 14.0 \\
		\bottomrule
	\end{tabular}
	\caption{Minimal and maximal increase of $\lamIIIfunc$ by a 2-extension of type E2Xs12 based on graphs up to nine vertices.}
	\label{tab:3dLaman_Increase_H3Xs12}
\end{table}

\begin{table}[ht]
	\centering\scriptsize
	\begin{tabular}{lrrrrr}
		\toprule
		$|V|$ & $G=(V,E)$ & $\lamIII{G}$ & $G'$ & $\lamIII{G'}$ & Factor\\
		\midrule
		8 & 14940667    &  64 & 2984755142     &  80 & 1.25 \\
		9 & 27429496251 & 144 & 13270976003147 & 104 & 0.72 \\
		\bottomrule
	\end{tabular}
	\begin{tabular}{lrrrrr}
		\toprule
		$|V|$ & $G=(V,E)$ & $\lamIII{G}$ & $G'$ & $\lamIII{G'}$ & Factor\\
		\midrule
		8 & 14921726   & 32 & 2984755142    & 80  &  2.5 \\
		9 & 2968255347 & 64 & 2068893654067 & 896 & 14.0 \\
		\bottomrule
	\end{tabular}
	\caption{Minimal and maximal increase of $\lamIIIfunc$ by a 2-extension of type E2Vs3 based on graphs up to nine vertices.}
	\label{tab:3dLaman_Increase_H3Vs3}
\end{table}

\begin{table}[ht]
	\centering\scriptsize
	\begin{tabular}{lrrrrr}
		\toprule
		$|V|$ & $G=(V,E)$ & $\lamIII{G}$ & $G'$ & $\lamIII{G'}$ & Factor\\
		\midrule
		7  & 515806      & 48  & 31972856       &  48 & 1.00 \\
		8  & 16103411    & 96  & 1936586163     &  96 & 1.00 \\
		9  & 52359970796 & 352 & 26734869998966 & 336 & 0.95 \\
		\bottomrule
	\end{tabular}
	\begin{tabular}{lrrrrr}
		\toprule
		$|V|$ & $G=(V,E)$ & $\lamIII{G}$ & $G'$ & $\lamIII{G'}$ & Factor\\
		\midrule
		7  & 450271     & 16 & 62103033      &   64 &  4 \\
		8  & 40812151   & 32 & 23948082611   &  288 &  9 \\
		9  & 3678789086 & 64 & 6956268515550 & 1280 & 20 \\
		\bottomrule
	\end{tabular}
	\caption{Minimal and maximal increase of $\lamIIIfunc$ by a 2-extension step of type E2Xs236 based on graphs up to nine vertices.}
	\label{tab:3dLaman_Increase_H3Xs236}
\end{table}

\begin{table}[ht]
	\centering\scriptsize
	\begin{tabular}{lrrrrr}
		\toprule
		$|V|$ & $G=(V,E)$ & $\lamIII{G}$ & $G'$ & $\lamIII{G'}$ & Factor\\
		\midrule
		7  & 515806      &  48 & 15196152      &  48 & 1.00 \\
		8  & 49724126    & 160 & 40818832819   &  96 & 0.60 \\
		9  & 37880438494 & 448 & 9660195076318 & 192 & 0.43 \\
		\bottomrule
	\end{tabular}
	\begin{tabular}{lrrrrr}
		\toprule
		$|V|$ & $G=(V,E)$ & $\lamIII{G}$ & $G'$ & $\lamIII{G'}$ & Factor\\
		\midrule
		7  & 450271      & 16 & 16162809      &   64 &  4 \\
		8  & 75372541    & 32 & 4089036272    &  416 & 13 \\
		9  & 18910895727 & 64 & 3559486584453 & 1664 & 26 \\
		\bottomrule
	\end{tabular}
	\caption{Minimal and maximal increase of $\lamIIIfunc$ by a 2-extension of type E2Vs236 based on graphs up to nine vertices.}
	\label{tab:3dLaman_Increase_H3Vs236}
\end{table}

\begin{table}[ht]
	\centering\scriptsize
	\begin{tabular}{lrrrrr}
		\toprule
		$|V|$ & $G=(V,E)$ & $\lamIII{G}$ & $G'$ & $\lamIII{G'}$ & Factor\\
		\midrule
		6  & 7935       &   8 & 237563       &  16 & 2.00 \\
		7  & 515806     &  48 & 62103033     &  64 & 1.33 \\
		8  & 63430223   &  64 & 14527489615  &  64 & 1.00 \\
		9  & 3010182951 & 128 & 974469719631 & 128 & 1.00 \\
		\bottomrule
	\end{tabular}
	\begin{tabular}{lrrrrr}
		\toprule
		$|V|$ & $G=(V,E)$ & $\lamIII{G}$ & $G'$ & $\lamIII{G'}$ & Factor\\
		\midrule
		6  & 7935       &  8 & 260603         &   32 &  4 \\
		7  & 384510     & 16 & 49524604       &  128 &  8 \\
		8  & 74378103   & 32 & 6336901027     &  448 & 14 \\
		9  & 9459054061 & 64 & 12178932745662 & 1280 & 20 \\
		\bottomrule
	\end{tabular}
	\caption{Minimal and maximal increase of $\lamIIIfunc$ by a 2-extension of type E2Xs239 based on graphs up to nine vertices.}
	\label{tab:3dLaman_Increase_H3Xs239}
\end{table}

\begin{table}[ht]
	\centering\scriptsize
	\begin{tabular}{lrrrrr}
		\toprule
		$|V|$ & $G=(V,E)$ & $\lamIII{G}$ & $G'$ & $\lamIII{G'}$ & Factor\\
		\midrule
		6  & 7935        &   8 & 237563        &  16 & 2.00 \\
		7  & 515806      &  48 & 73914364      &  32 & 0.67 \\
		8  & 49724126    & 160 & 18848282483   &  64 & 0.40 \\
		9  & 11717490611 & 512 & 9634462543324 & 128 & 0.25 \\
		\bottomrule
	\end{tabular}
	\begin{tabular}{lrrrrr}
		\toprule
		$|V|$ & $G=(V,E)$ & $\lamIII{G}$ & $G'$ & $\lamIII{G'}$ & Factor\\
		\midrule
		6  & 7935       &  8 & 515806        &   48 &  6 \\
		7  & 382463     & 16 & 49724126      &  160 & 10 \\
		8  & 74100607   & 32 & 11717490611   &  512 & 16 \\
		9  & 5711063903 & 64 & 6130619373214 & 1408 & 22 \\
		\bottomrule
	\end{tabular}
	\caption{Minimal and maximal increase of $\lamIIIfunc$ by a 2-extension of type E2Vs239 based on graphs up to nine vertices.}
	\label{tab:3dLaman_Increase_H3Vs239}
\end{table}

\begin{table}[ht]
	\centering\scriptsize
	\begin{tabular}{lrrrrr}
		\toprule
		$|V|$ & $G=(V,E)$ & $\lamIII{G}$ & $G'$ & $\lamIII{G'}$ & Factor\\
		\midrule
		5  & 511        & 4  & 7935         & 8   & 2 \\
		6  & 7679       & 8  & 237311       & 16  & 2 \\
		7  & 237055     & 16 & 14917375     & 32  & 2 \\
		8  & 14917119   & 32 & 1893965567   & 64  & 2 \\
		9  & 1893965311 & 64 & 482930302719 & 128 & 2 \\
		\bottomrule
	\end{tabular}
	\begin{tabular}{lrrrrr}
		\toprule
		$|V|$ & $G=(V,E)$ & $\lamIII{G}$ & $G'$ & $\lamIII{G'}$ & Factor\\
		\midrule
		5  & 511        & 4  & 16350         & 16   & 4  \\
		6  & 7935       & 8  & 515806        & 48   & 6  \\
		7  & 382463     & 16 & 49724126      & 160  & 10 \\
		8  & 74100607   & 32 & 11717490611   & 512  & 16 \\
		9  & 6814169327 & 64 & 3112621267595 & 1664 & 26 \\
		\bottomrule
	\end{tabular}
	\caption{Minimal and maximal increase of $\lamIIIfunc$ by a vertex split based on graphs up to nine vertices.}
	\label{tab:3dLaman_Increase_Vsplit}
\end{table}

\begin{table}[ht]
	\centering\scriptsize
	\begin{tabular}{lrrrrr}
		\toprule
		$|V|$ & $G=(V,E)$ & $\lamIII{G}$ & $G'$ & $\lamIII{G'}$ & Factor\\
		\midrule
		5  & 511        & 4  & 7679         & 8   & 2 \\
		6  & 7679       & 8  & 237055       & 16  & 2 \\
		7  & 237055     & 16 & 14917119     & 32  & 2 \\
		8  & 14917119   & 32 & 1893965311   & 64  & 2 \\
		9  & 1893965311 & 64 & 482930302463 & 128 & 2 \\
		\bottomrule
	\end{tabular}
	\begin{tabular}{lrrrrr}
		\toprule
		$|V|$ & $G=(V,E)$ & $\lamIII{G}$ & $G'$ & $\lamIII{G'}$ & Factor\\
		\midrule
		5  & 511         & 4  & 7679          & 8    & 2  \\
		6  & 8187        & 8  & 515806        & 48   & 6  \\
		7  & 450271      & 16 & 49724126      & 160  & 10 \\
		8  & 52944111    & 32 & 7345971057    & 640  & 20 \\
		9  & 13236016767 & 64 & 2004558244619 & 2304 & 36 \\
		\bottomrule
	\end{tabular}
	\caption{Minimal and maximal increase of $\lamIIIfunc$ by a spider split based on graphs up to nine vertices.}
	\label{tab:3dLaman_Increase_Ssplit}
\end{table}

\end{document}